\documentclass{aims}
\usepackage{amsmath}
  \usepackage{paralist}
  \usepackage{graphics} 
  \usepackage{epsfig} 
\usepackage{graphicx}  \usepackage{epstopdf}
 \usepackage[colorlinks=true]{hyperref}
 \usepackage{appendix}
\hypersetup{urlcolor=blue, citecolor=red}

  \textheight=8.2 true in
   \textwidth=5.0 true in
    \topmargin 30pt
     \setcounter{page}{1}



\newtheorem{theorem}{Theorem}[section]
\newtheorem{corollary}{Corollary}

\newtheorem{lemma}[theorem]{Lemma}
\newtheorem{proposition}[theorem]{Proposition}

\theoremstyle{definition}

\newtheorem{remark}[theorem]{Remark}

\title[Asymptotic spreading of interacting species] 
      {Asymptotic spreading of interacting species with multiple fronts I: A geometric optics approach}

\author[Qian Liu, Shuang Liu and King-Yeung Lam]{}

\subjclass{Primary: 35K58, 35B40; Secondary: 35D40.}
 \keywords{Hamilton-Jacobi equation, geometric optics, spreading, competition, compacted support.}

 \email{liuqian0519@ruc.edu.cn}
 \email{liushuangnqkg@ruc.edu.cn}
 \email{lam.184@osu.edu}

\thanks{The last author is partiallly supported by  NSF grant DMS-1853561.}

\thanks{$^*$ Corresponding author: King-Yeung Lam}

\begin{document}
\maketitle
\centerline{\scshape Qian Liu$^{1,2}$, Shuang Liu$^{1,2}$ and King-Yeung Lam$^{2,*}$}
\medskip
{\footnotesize
 \centerline{ $^1$ Institute for Mathematical Sciences, Renmin University of China}
   \centerline{Beijing, 100872, China}
} 

\medskip

\medskip
{\footnotesize
   \centerline{$^2$ Department of Mathematics, Ohio State University}
   \centerline{Columbus, OH 43210, USA}
}

%
%

\bigskip

 \centerline{(Communicated by the associate editor name)}

\begin{abstract}
We establish spreading properties of the Lotka-Volterra competition-diffusion system. When the initial data vanish on a right half-line, we derive the exact spreading
speeds and prove the convergence to homogeneous equilibrium states between successive invasion fronts. Our method is inspired by the geometric optics approach for Fisher-KPP equation due to Freidlin, Evans and Souganidis. Our main result settles an open question raised by Shigesada et al. in 1997,  and shows that one of the species spreads to the right with a nonlocally pulled front.
\end{abstract}

\section{Introduction}

In this paper, we study the spreading of two competing species, modeled by
 the Lotka-Volterra two-species competition-diffusion system. The non-dimensionalized system reads
\begin{equation}\label{eq:1-1}
\left\{
\begin{array}{cc}
\partial_t u-\partial_{xx}u=u(1-u-av),& \text{ in }(0,\infty)\times \mathbb{R},\\
\partial _t v-d\partial_{xx}v=r v(1-bu-v),& \text{ in }(0,\infty)\times \mathbb{R},\\
u(0,x)=u_0(x), & \text {for all } x \in \mathbb{R},\\
v(0,x)=v_0(x), & \text {for all } x \in \mathbb{R},
\end{array}
\right .
\end{equation}
where the positive constants $d$ and $r$ are the diffusion coefficient and intrinsic
growth rate of $v$; $u(t,x)$ and $v(t,x)$ represent the population densities of the two competing species at time $t$ and location $x$. Without loss of generality, we assume {$dr \geq 1$} throughout most of this paper.
It
is clear that \eqref{eq:1-1} admits  a trivial equilibrium $(0, 0)$ and two semi-trivial equilibria (1,0) and (0,1). Throughout this paper we assume  $0 <a<1$ and $0<b <1$, so that there is a further linearly stable equilibrium:
 $$(k_1,k_2)=\left(\frac{1-a}{1-ab},\frac{1-b}{1-ab}\right).$$

 There is a vast number of mathematical results concerning the spreading of competing populations
with a single interface connecting two equilibrium states, see, e.g., \cite{Lewis_2002,Liang_2007,Lin_2012} and the references therein. By a classical result by Lewis et al., it is known that for \eqref{eq:1-1},
the  spreading speed is closely related to the minimum wave speed of traveling wave solutions connecting the ordered pair of two equilibria of \eqref{eq:1-1}.
\begin{theorem}[Lewis et al.{\cite{Lewis_2002,Li_2005}}]\label{thm:LLW}
Let $(u,v)$ 
be the solution of \eqref{eq:1-1} with initial data
$$u(0,x)=\rho_1(x), \,\,\,\, v(0,x)=1-\rho_2(x), 
$$
where $0\leq \rho_i<1$ $(i=1,2)$ are compactly supported functions in $\mathbb{R}$. Then there exists ${c}_{\textup{LLW}} \in [2\sqrt{1-a}, 2]$ such that 
 \begin{align*}
 \left\{
\begin{array}{l}
\smallskip
\lim\limits_{t\rightarrow \infty}  \sup\limits_{|x|<c t} (|u(t,x)-k_1|+|v(t,x)-k_2|)=0 ~\text{ for each } c<{c}_{\textup{LLW}},\\
\lim\limits_{t\rightarrow \infty}  \sup\limits_{|x|>c t} (|u(t,x)|+|v(t,x)-1|)=0 ~\text{ for each }~ c>{c}_{\textup{LLW}}.
\end{array}
\right.
 \end{align*}
In this case, we say that the population $u$ spreads at speed ${c}_{\textup{LLW}}$.
\end{theorem}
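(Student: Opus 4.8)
The plan is to reduce \eqref{eq:1-1} to a cooperative system and invoke the general theory of spreading speeds for monotone parabolic systems developed by Lewis et al.\ in \cite{Lewis_2002,Li_2005}. Setting $w:=1-v$, the pair $(u,w)$ solves
\begin{equation*}
\partial_t u-\partial_{xx}u = u(1-a-u+aw), \qquad \partial_t w-d\,\partial_{xx}w = r(1-w)(bu-w),
\end{equation*}
which is quasi-monotone (cooperative) on the invariant rectangle $\mathcal R:=\{\,0\le u\le \bar K,\ 0\le w\le 1\,\}$ with $\bar K:=\max\{1,\|u_0\|_\infty\}$, since $\partial_w[u(1-a-u+aw)]=au\ge 0$ and $\partial_u[r(1-w)(bu-w)]=rb(1-w)\ge 0$ on $\mathcal R$. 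In these coordinates the semi-trivial state $(0,1)$ becomes the zero state $(0,0)$, the coexistence state $(k_1,k_2)$ becomes $\beta:=(k_1,1-k_2)$, the state $(1,0)$ becomes $(1,1)$, and the initial data become the compactly supported pair $(\rho_1,\rho_2)$. Since $a,b<1$, within $\mathcal R$ the invasion under consideration is of \emph{monostable} type: the linearization at $(0,0)$ has principal eigenvalue $1-a>0$, so $(0,0)$ is unstable, while $\beta$ is stable; the remaining corner $(1,1)$ is also unstable (eigenvalue $r(1-b)>0$), which prevents the solution from overshooting toward $(1,1)$.

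The first main step is to apply the abstract spreading-speed theory of Lewis et al.\ to the monotone semiflow generated by this system on $\mathcal R$ with the ordered pair of equilibria $(0,0)\le\beta$. After verifying the structural hypotheses --- translation invariance, order preservation, and continuity/compactness of the time-one map (the last from parabolic smoothing on $\mathbb R$) --- the theory yields a single well-defined number $c_{\textup{LLW}}>0$ such that any solution issuing from data that equals $(0,0)$ outside a compact set and is nontrivial spreads at speed $c_{\textup{LLW}}$: it converges locally uniformly to $\beta$ on $\{|x|<ct\}$ for every $c<c_{\textup{LLW}}$ and to $(0,0)$ on $\{|x|>ct\}$ for every $c>c_{\textup{LLW}}$. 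Transcribing back through $v=1-w$ reproduces the two limits in the statement, and by \cite{Li_2005} this same $c_{\textup{LLW}}$ is characterized as the slowest speed of a monotone travelling wave connecting $(0,0)$ to $\beta$.

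To locate $c_{\textup{LLW}}$ within $[2\sqrt{1-a},2]$ I would use scalar comparison on the $u$-equation alone. From $0\le w\le1$,
\begin{equation*}
u(1-a-u)\ \le\ \partial_t u-\partial_{xx}u\ =\ u(1-a-u+aw)\ \le\ u(1-u),
\end{equation*}
so $u$ is a subsolution of the Fisher--KPP equation with reaction $u(1-u)$, which spreads from compact support at speed $2$; hence $c_{\textup{LLW}}\le 2$. For the lower bound, after any positive time $u>0$ everywhere by the strong maximum principle, and $u$ is then a supersolution of $\partial_t\phi-\partial_{xx}\phi=\phi(1-a-\phi)$, whose compactly supported solutions spread at the exact speed $2\sqrt{1-a}$; comparison from below yields $c_{\textup{LLW}}\ge 2\sqrt{1-a}$. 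It remains to confirm that $w$ (equivalently $v$) propagates at the same speed: ahead of the $u$-front $w$ satisfies a linear differential inequality forced by the decaying quantity $u$, which drives $w\to 0$ there; behind the front the convergence of $(u,w)$ to $\beta$ follows by trapping $(u,w)$ between ordered super- and sub-solutions and invoking the global asymptotic stability of $(k_1,k_2)$ for the Lotka--Volterra kinetics when $a,b<1$ (a Lyapunov-function argument).

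The principal obstacle --- and what makes this a genuine theorem rather than a consequence of scalar Fisher--KPP theory --- is the assertion that the ``slowest'' and ``fastest'' spreading speeds produced by the monotone-systems machinery actually coincide, so that a single $c_{\textup{LLW}}$ governs the invasion. This is subtle precisely because the front may be \emph{nonlinearly determined}: when $c_{\textup{LLW}}>2\sqrt{1-a}$ the speed cannot be read off from the linearization at $(0,0)$, so KPP-type arguments are unavailable and one must exploit the full cooperative structure together with the variational characterization of the wave speed, which is the technical core of \cite{Lewis_2002,Li_2005}.
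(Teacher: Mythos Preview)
The paper does not contain a proof of this statement: Theorem~\ref{thm:LLW} is quoted from the literature \cite{Lewis_2002,Li_2005} and used as a black box, so there is no ``paper's own proof'' to compare against. Your sketch correctly outlines the strategy of those references --- the change of variables $w=1-v$ to obtain a cooperative system on an invariant rectangle, verification of the hypotheses of the Weinberger--Lewis--Li spreading-speed framework, and scalar Fisher--KPP comparisons on the $u$-equation to pin $c_{\textup{LLW}}$ in $[2\sqrt{1-a},2]$ --- and is appropriate as an exposition of why the cited result applies here.

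One point worth tightening: you correctly identify the coincidence of the fastest and slowest spreading speeds as the technical heart, but your sketch does not indicate how \cite{Li_2005} actually secures this. In that paper the equality $c^*=c^*_+$ is obtained for cooperative systems whose reaction term is dominated by its linearization at the invaded state in the componentwise sense (a KPP-type hypothesis on the \emph{system}), or more generally via the recursion-operator characterization; the instability of $(1,1)$ that you mention is needed to rule out hair-trigger convergence to the wrong equilibrium but is not by itself what forces a single speed. If you want the sketch to stand on its own you should state which structural hypothesis of \cite{Li_2005} is being invoked and why the transformed system satisfies it.
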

\begin{remark}\label{rmk:LLW}
If the initial data $(u,v)(0,x)$ is a compact perturbation of $(1,0)$, then there exists $\tilde{c}_{\textup{LLW}} \in [2\sqrt{dr(1-b)}, 2\sqrt{dr}]$ such that the species $v$ spreads at speed $\tilde{c}_{\textup{LLW}}$.
\end{remark}
Concerning the bounds of ${c}_{\textup{LLW}}$, standard linearization near the equilibrium $(0, 1)$ shows
 $${c}_{\textup{LLW}}\geq 2\sqrt{1-a}.$$
 Numerical tests by Hosono [2]  showed that the above equality holds only for certain values of model parameters $d,r,a,b$. This begs the question of if and when the equality holds, which is known as the question of linear determinacy.
 Recently, Huang and Han \cite{Huang_2011} rigorously demonstrated that $c_{\rm LLW} > 2\sqrt{1-a}$ is possible via an explicit construction. On the other hand,  sufficient conditions for linear determinacy are first obtained in \cite{Lewis_2002} and are subsequently improved in \cite{Huang_2010}. See also \cite{Ou2019a,Ou2019} for recent development on necessary and sufficient conditions.

The goal of this paper is to understand the co-invasion of two competing species for a different class of initial data $(u_0,v_0) \in C(\mathbb{R}; [0,1])^2$:
$$
{\rm{(H_{\infty})}}\begin{cases}
\text{There exist positive constants }\theta_0, x_0\text{ such that } \\
\theta_0 \leq u_0(x) \leq 1 \quad \text{ in }(-\infty,0], \quad \text{ and }\quad u_0(x) =0 \quad \text{ in  }[x_0,\infty).\\
\text{Also, }v_0(x)\text{ is non-trivial and has compact support.}
\end{cases}
$$
In other words, we assume the right habitat is unoccupied initially. This question was raised by Shigesada and Kawasaki \cite{Shigesada_1997} as they considered the invasion of two or more tree species into the North American continent at the end of last ice age (approximately 16,000 years ago) \cite{Davis_1981}. An interesting scenario arises when the slower moving species invades into the (still expanding) range of the faster moving species. The numerical computations in \cite[Ch. 7]{Shigesada_1997} illustrate that the two species set up at least two invasion fronts: The first front occurs as the faster species invades into open habitat at some speed $c_1$, while the next front appears when the slower species ``chases'' the faster species at speed $c_2$.

When the initial data $u_0$ and $v_0$ are both compactly supported,
 the spreading properties of \eqref{eq:1-1} with $a,\,b\in(0,1)$ were initially studied by  Lin and Li \cite{Lin_2012}.
 They showed that the faster species $v$ spreads at speed $c_1 = 2\sqrt{dr}$ and  obtained an estimate of the spreading speed $c_2$ of the slower species $u$, which satisfies $2\sqrt{1-a} \leq c_2 \leq 2$. In case $2\sqrt{dr} > 2+2\sqrt{1-a(1-b)}$, they obtained an improved estimate of $c_2$, namely,
 $$
 2\sqrt{1-a} \leq c_2 \leq 2\sqrt{1-a(1-b)}.
 $$
 Nevertheless, the exact formula of $c_2$ remained open.

In the bistable case $a,b > 1$ {with appropriate initial conditions}, the spreading problem was studied by Carr\`{e}re \cite{Carrere_2018}, who showed that solutions of \eqref{eq:1-1} exhibit two moving interfaces connecting, starting from the right, $(0,0)$ to $(0,1)$ to $(1,0)$. The first interface moves with the expected speed of $c_1=2\sqrt{dr}$, whereas the second interface moves with speed $c_2$ which is the speed of the unique traveling wave solution connecting $(0,1)$ to $(1,0)$.

The monostable case $0 < a < 1 < b$, which is closely related to our problem, was
considered by Girardin and the last author \cite{Girardin_2018}. By a delicate construction of piecewise smooth super- and sub-solutions for \eqref{eq:1-1}, it was shown that while the faster species spreads at the expected speed $c_1=2\sqrt{dr}$, the spreading speed $c_2$ of the slower species depends on $c_1$ in a non-trivial way, and is a {nonlocally} determined quantity in general
(see Subsection \ref{subsect:1.1} for details). While it is possible to generalize the method in \cite{Girardin_2018} for our purpose, the details of the construction will likely be quite daunting, as a total of three moving interfaces, connecting $(0,0)$, $(0,1)$, $(k_1,k_2)$ and $(1,0)$, has to be accounted for. Hence, a more direct method is preferable to better understand the problem.

In this paper, we will demonstrate how the geometric optics point of view can
 lead to a more direct determination of the various spreading speeds of the competing species.
The method of geometric optics is based on deriving the limiting problem for large space and large time, for which the solution has to be understood in the viscosity sense. It was introduced by Freidlin \cite{Freidlin_1985}, who employed probabilistic arguments to study the asymptotic behavior of solution to the Fisher-KPP equation modeling the population of a single species. Subsequently, the result was generalized by Evans and Souganidis using PDE arguments; see also \cite{Barles_1993,Berestycki_2012,Majda_1994,Mendez_2003,Souganidis_1995,Xin_2000}. The method was also applied by Barles, Evans and Souganidis \cite{Barles_1990} to study {{\rm{KPP}} systems, where several species spread at a common spreading speed.}

Finally, we also mention some related works on the Cauchy problem of interacting species spreading into open habitat. A class of predator-prey systems were considered by Ducrot et al. \cite{Ducrot_preprint}. For cooperative systems with equal diffusion coefficients, the existence of stacked fronts for cooperative systems was also studied by Iida et al. \cite{Iida_2011}. We refer to \cite{Li_2018} for the spreading of two species into an open habitat in an integro-difference competition model. Therein results analogous to Theorem \ref{thm:1-1} were established in the case  $c_2 = c_{\rm{LLW}} = 2\sqrt{1-a}$ i.e., in case  $c_1 \gg c_2$ and that linear determinacy holds.  In these works, however, the spreading speeds of individual species can be determined locally and are not influenced by the presence of other invasion fronts.

\subsection{Main results}\label{subsect:1.1}
Our main result, for the case $dr >1$, can be stated as follows.

\begin{theorem}\label{thm:1-1} 
Assume $dr >1$. Let $(u,v)$ be any solution of \eqref{eq:1-1} such that the initial data satisfies
$\rm{(H_{\infty})}$.  Then there {exist} $c_1, c_2, c_3\in \mathbb{R}$ such that
\begin{itemize}

\item[{\rm(a)}] $c_3 \leq -2\sqrt{dr(1-b)} < 0 < 2\sqrt{1-a} \leq c_2 \leq 2 < c_1;$

\item[{\rm(b)}] For each small $\eta>0$, the following spreading results hold:
\begin{equation}
\begin{cases}
\lim\limits_{t\rightarrow \infty} \sup\limits_{ x>(c_{1}+\eta) t} (|u(t,x)|+|v(t,x)|)=0, \\
\lim\limits_{t\rightarrow \infty} \sup\limits_{(c_2+\eta) t< x<(c_{1}-\eta) t} (|u(t,x)|+|v(t,x)-1|)=0, \\\lim\limits_{t\rightarrow \infty} \sup\limits_{(c_{3}+\eta)t< x<(c_2-\eta) t} (|u(t,x)-k_1|+|v(t,x)-k_2|)=0 , \\ \lim\limits_{t\rightarrow \infty} \sup\limits_{x<(c_{3}-\eta)t} (|u(t,x)-1|+|v(t,x)|)=0. 
 \end{cases} \label{eq:spreadingly}
\end{equation}
\end{itemize}
Precisely, the spreading speeds $c_1,\,c_2,\,c_3$ can be determined as follows:
\begin{equation}\label{eq:c1c2c3}
c_1 = 2\sqrt{dr}, \quad c_2 = \max\{{c}_{\textup{LLW}}, {c}_{\textup{nlp}}\},\quad c_3 = -\tilde{c}_{\textup{LLW}},
\end{equation}
where ${c}_{\textup{LLW}}$ $(resp.~\tilde{c}_{\textup{LLW}})$ is the spreading speed of $(k_1,k_2)$ into $(0,1)$ $($resp. $(k_1,k_2)$ into $(1,0))$ as given in Theorem \ref{thm:LLW} $($resp. Remark \ref{rmk:LLW}$)$.  And
\begin{equation}\label{eq:c_acc-1}
\begin{array}{ll}
{c}_{\textup{nlp}}= \left\{
\begin{array}{ll}
\sqrt{dr} - \sqrt a + \frac{1-a}{\sqrt{dr} - \sqrt a},  & \qquad  \qquad \text{ if }\,\,\sqrt{dr} \leq \sqrt a +\sqrt{1-a},\\
2\sqrt{1-a}, & \qquad \qquad \text{ if }\,\,\sqrt{dr} > \sqrt a +\sqrt{1-a}.
\end{array}
\right.
\end{array}
\end{equation}
\end{theorem}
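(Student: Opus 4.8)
The plan is to run the geometric-optics (WKB / large-deviations) machinery of Freidlin--Evans--Souganidis for \emph{both} species at once, locating the four sectors of \eqref{eq:spreadingly} from the outside inward, and to invoke Theorem~\ref{thm:LLW} and Remark~\ref{rmk:LLW} for those fronts that are pushed and hence invisible to the linear theory. First I would record the crude barriers: since $u,v\ge0$, the function $v$ is a subsolution of $\partial_t v-d\partial_{xx}v=rv(1-v)$ and $u$ of $\partial_t u-\partial_{xx}u=u(1-u)$, so the right fronts of $v$ and of $u$ move no faster than $2\sqrt{dr}$ and $2$ respectively; since $dr>1$ this already \emph{separates} the two right fronts, $c_2\le2<2\sqrt{dr}=c_1$, which is what makes the layered picture possible. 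Then set $u^\varepsilon(t,x)=u(t/\varepsilon,x/\varepsilon)$, $v^\varepsilon(t,x)=v(t/\varepsilon,x/\varepsilon)$ and introduce the rate functions $w^\varepsilon=-\varepsilon\log u^\varepsilon$, $z^\varepsilon=-\varepsilon\log v^\varepsilon$, which solve
\begin{align*}
&\partial_t w^\varepsilon+|\partial_x w^\varepsilon|^2-\varepsilon\,\partial_{xx}w^\varepsilon+(1-u^\varepsilon-av^\varepsilon)=0,\\
&\partial_t z^\varepsilon+d|\partial_x z^\varepsilon|^2-d\varepsilon\,\partial_{xx}z^\varepsilon+r(1-bu^\varepsilon-v^\varepsilon)=0 .
\end{align*}
Uniform-in-$\varepsilon$ parabolic gradient estimates make the half-relaxed limits of $w^\varepsilon,z^\varepsilon$ finite and, by the stability of viscosity solutions, sub/supersolutions of variational inequalities whose reaction coefficients are the piecewise-constant limits of $1-u^\varepsilon-av^\varepsilon$ and $r(1-bu^\varepsilon-v^\varepsilon)$ in each space-time sector.

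The two outer fronts come next. In $\{x>(2+\eta)t\}$ we have $u^\varepsilon\to0$, so $z^\varepsilon$ converges to the solution of $\min\{Z_t+d|Z_x|^2+r,\,Z\}=0$ with $Z(0,\cdot)=0$ on the (bounded) support of $v_0$; its zero set is exactly $\{x\le2\sqrt{dr}\,t\}$, which together with $u\to0$ on $\{x>(2+\eta)t\}$ gives $c_1=2\sqrt{dr}$ and the convergences $v\to1$, $u\to0$ in the first two sectors. The leftmost front is the invasion of $(1,0)$ by $v$; here geometric optics delivers only the linear lower bound $2\sqrt{dr(1-b)}$, so instead I would sandwich $(u,v)$, using the competitive comparison principle, between two solutions with data that are compact perturbations of $(1,0)$, apply Remark~\ref{rmk:LLW}, and note that the plateau behind this front equals $(k_1,k_2)$ because that state is globally attracting for the reaction ODE (upgraded to uniform convergence on $\{x<(c_3+\eta)t\}$ by the usual squeezing argument). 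This yields $c_3=-\tilde c_{\textup{LLW}}$ and the last line of \eqref{eq:spreadingly}.

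It remains to locate $c_2$, which is the heart of the matter. The bound $c_2\ge c_{\textup{LLW}}$ follows as above: $u_0$ is bounded below on a left half-line and $v\le1$, so $(u,v)$ dominates, in the competitive order, a solution with data a compact perturbation of $(0,1)$, to which Theorem~\ref{thm:LLW} applies. For the rest one studies $W=\lim w^\varepsilon$ in $\{x>0\}$, where the limiting reaction coefficient of $u$ is $1-a$ in $\{0<x<2\sqrt{dr}\,t\}$ (there $v^\varepsilon\to1$) and $1$ in $\{x>2\sqrt{dr}\,t\}$ (there $v^\varepsilon\to0$), while $W\equiv0$ at $t=0$ on $\{x\le0\}$ and wherever $u$ is bounded below; thus $W$ is the viscosity solution of
\begin{equation*}
\min\big\{\,W_t+|W_x|^2+g(t,x),\ W\,\big\}=0,\qquad g(t,x)=\begin{cases}1-a,&0<x<2\sqrt{dr}\,t,\\ 1,&x>2\sqrt{dr}\,t.\end{cases}
\end{equation*}
The right front of $u$ is then the right edge of $\{W=0\}$. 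By scale invariance $W(t,x)=t\,\mathcal I(x/t)$, and the Hopf--Lax representation expresses $\mathcal I(c)$ as the minimal cost $\int_0^1\big(|\gamma'|^2/4-g(s,\gamma(s))\big)\,ds$ over paths $\gamma$ from the plateau to $(1,c)$; the optimal path initially overtakes $v$'s front and then falls behind it, coasting cheaply, and optimising the length of this excursion produces exactly $c_{\textup{nlp}}$, the two branches of \eqref{eq:c_acc-1} reflecting whether the excursion is profitable (equivalently, whether $\sqrt{dr}\le\sqrt a+\sqrt{1-a}$). Equivalently, the admissible exponential decay rates $\lambda$ of $u$'s leading edge are those for which the WKB profile stays positive across $v$'s front, which forces $(\lambda-\sqrt{dr})^2\ge a$, i.e.\ $\lambda\le\sqrt{dr}-\sqrt a$ or $\lambda\ge\sqrt{dr}+\sqrt a$, and the pulled-front speed $\min_\lambda\big(\lambda+(1-a)/\lambda\big)$ over this set equals $c_{\textup{nlp}}$. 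The matching lower bound $c_2\ge c_{\textup{nlp}}$ is obtained by turning the optimal profile into a genuine subsolution of the $u$-equation, and the upper bound $c_2\le\max\{c_{\textup{LLW}},c_{\textup{nlp}}\}$ from the supersolution property of $W$. Finally, once $u$ is bounded below and $v$ is present in $\{c_3t<x<c_2t\}$, the convergence $(u,v)\to(k_1,k_2)$ there follows as in the previous step.

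The step I expect to be the main obstacle is the one for $c_2$: making rigorous the coupled passage to the limit for $W$ when the coefficient $g$ jumps, and the obstacle $\{W=0\}$ switches on, precisely along fronts that are known only a posteriori. This needs a bootstrap --- the outer sectors are fixed first, the medium $g$ and the obstacle region for $W$ are then read off, and one must check that the explicit solution of the obstacle problem above is consistent with, and sharp for, the true front of $u$, including the arithmetic facts that $c_{\textup{nlp}}\ge2\sqrt{1-a}$ always (by the arithmetic--geometric mean inequality) and $c_{\textup{nlp}}<2$ when $dr>1$, so that the four-sector picture closes up. A secondary difficulty is transferring Theorem~\ref{thm:LLW} and Remark~\ref{rmk:LLW}, which are phrased for single-front Cauchy data, to the present multi-front solution by two-sided competitive comparison.
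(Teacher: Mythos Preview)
Your overall architecture---fixing the outer fronts first, then reading off the piecewise-constant coefficient for the rate function of $u$---is the same as the paper's. The gap is in the sentence ``the upper bound $c_2\le\max\{c_{\textup{LLW}},c_{\textup{nlp}}\}$ from the supersolution property of $W$.'' There is no single limit $W$: the upper half-relaxed limit $w^*$ is a subsolution of the variational inequality with Hamiltonian $|p|^2+1-a\chi_{\{x\le c_1t\}}$ (your $g$), because one uses $\limsup v^\varepsilon\le\chi_{\{x\le c_1t\}}$; but the lower half-relaxed limit $w_*$ is only a \emph{super}solution of the inequality with the \emph{smaller} Hamiltonian $|p|^2+1-a\chi_{\{\tilde c_1 t<x<c_1t\}}$, since all you know a priori is $\liminf v^\varepsilon\ge\chi_{\{\tilde c_1 t<x<c_1t\}}$ with $\tilde c_1=2$. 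In the strip $\{0<x<2t\}$---exactly the region containing the sought front---$v$ is not yet known to converge to $1$ (that is part of what you are trying to prove), so your coefficient $g=1-a$ there is unjustified for $w_*$. Consequently the zero set of the solution to the ``correct'' super-solution problem can be strictly larger than $\{x\le c_{\textup{nlp}}t\}$, and the Hamilton--Jacobi analysis alone yields only the \emph{lower} bound $\underline c_2\ge c_{\textup{nlp}}$, never the matching upper bound. The bootstrap you sketch does not close.

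The paper resolves this by a genuinely different second step. It shows that $w^*=w_*$ holds in a thin cone $\{x\ge(c_1-\delta^*)t\}$ near the first front (there the optimal trajectories for both Lagrangians coincide, since they never dip below $x=\tilde c_1 t$), which pins down the \emph{exact} exponential rate $u(t,\hat c t)=\exp\bigl(-(\hat\mu+o(1))t\bigr)$ along one ray $x=\hat c t$ with $\hat c=c_1-\delta^*$. The upper bound on $c_2$ is then obtained not from the Hamilton--Jacobi equation at all, but by restricting $(u,v)$ to the strip $\{0\le x\le\hat c t\}$ and comparing with a traveling wave of the full competition system connecting $(k_1,k_2)$ to $(0,1)$, whose decay rate is matched to $\hat\mu$; this is Lemma~\ref{lem:appen1}(a) and Proposition~\ref{thm:3-7}. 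This traveling-wave comparison is also what produces the $c_{\textup{LLW}}$ branch of the maximum when the front is pushed---something the linear WKB profile cannot see. (A parallel issue arises for $c_3$: your sandwich gives only $\overline c_3\le-\tilde c_{\textup{LLW}}$, since there is no compact perturbation of $(1,0)$ that lies \emph{below} $(u_0,v_0)$ in the competitive order; the paper obtains $\underline c_3\ge-\tilde c_{\textup{LLW}}$ by the same traveling-wave device, Lemma~\ref{lem:appen1}(b).)
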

The above result can be abbreviated as
\begin{equation}\label{eq:spreadinglyly}
(1,0) \,\, \xleftarrow{\,\,\, c_3 \,\,\,}\,\,(k_1,k_2)\,\, \xrightarrow{\,\,\, c_2 \,\,\,}\,\,(0,1)\,\, \xrightarrow{\,\,\, c_1 \,\,\,}\,\,(0,0).
\end{equation}

The above result also shows that, while the spreading speed $c_1$ of the faster species $v$ is the linearly determined speed of $2\sqrt{dr}$ and is unaffected by the slower species $u$, the corresponding speed $c_2$ of species $u$ is a non-{increasing} function of $c_1$. This is due to the fact that the presence of $v$ negatively impacts the invasion of $u$. It is clear that, even though $u_0(x)$ vanishes for $x \gg 1$, the spreading speed $c_2$ can be strictly greater than ${c}_{\textup{LLW}}$, i.e., the second front {moves} at an enhanced speed that is strictly greater than the minimal speed of traveling wave solutions.
As we shall see, the expression \eqref{eq:c_acc-1} of ${c}_{\textup{nlp}}$ coincides with that in \cite[Theorem 1.1]{Girardin_2018}, and  can be characterized by
\begin{equation}\label{eq:equivdef}
\{(t,x):\, w_1 (t,x) = 0\} = \{ (t,x):\, t>0\, \text{ and }\, x \leq c_{\textup{nlp}} t\},
\end{equation}
where $w_1$ is the unique viscosity solution of the following Hamilton-Jacobi equation with space-time inhomogeneous coefficients:
\begin{equation}\label{eq:hj_j11}
\left\{\begin{array}{ll}
\medskip
\min\{\partial_t  w + |\partial_x w|^2 + 1 - a \chi_{\{x \leq 2\sqrt{dr} t\}}, w\} = 0, & \text{ for }(t,x) \in (0,\infty)\times \mathbb{R},\\
 w(0,x)=\left\{\begin{array}{ll} 0, &\text{ for } x\in (-\infty,0],\\
\infty, &\text{ for } x\in  (0,\infty),
\end{array}
\right.
\end{array}
\right.
\end{equation}
where $\chi_{S}$ is the indicator function of the set $S \subset \mathbb{R}^2$. (Hereafter the initial condition similar to the one in \eqref{eq:hj_j11} is to be understood in the sense that $w_1(t,x) \to 0$ if $(t,x) \to (0, x_0)$ for some $x_0 <0$, and $w_1(t,x) \to \infty$ if $(t,x) \to (0,x_0)$ for some $x_0 >0$).

To explain the sense in which the speed ${c}_{\textup{nlp}}$ is said to be nonlocally determined, let us define $c_{\textup{nlp}}$ for the moment by the relation \eqref{eq:equivdef}, where $w_1$ is the unique viscosity solution of \eqref{eq:hj_j11}.
As we will show in Lemma \ref{lem:3-3}, $w_1(t,x)=\max\{J_1(t,x),0\}$, with
$$
J_1(t,x) = \inf_{\gamma(\cdot)}\int_0^t \left[\frac{|\dot\gamma(s)|^2}{4} - 1 + a\chi_{\{\gamma(s) \leq 2\sqrt{dr} s\}} \right]\,ds,
$$
where the infimum is taken over all curves $\gamma \in H^1_{\rm{loc}}([0,\infty))$ such that $\gamma(0) = 0$ and $\gamma(t) = x$. For each $(t,x)$ on the front, (i.e., $x = c_{\rm nlp}t$), the minimizing path $\hat\gamma(s) = \hat\gamma^{t,x}(s)$ describes how an individual located at $(0,0)$ 
arrives at the front $x = c_{\rm nlp}t$ at time $t$.

Now, when $a=0$, the problem \eqref{eq:hj_j11} is homogeneous.
In this case $w_1(t,x) = \frac{t}{4}\left(\frac{x^2}{t^2} -4\right)$,
so that the front is characterized by $x = 2t$. Furthermore, for each $(t,x)$ on the front, the minimizing path $\hat\gamma(s)$ is given by the straight line $\hat\gamma(s) = \frac{x}{t}s = 2s$, i.e., an individual arriving at the front $x = 2t$ at time $t$ has been staying at the front $x = 2s$  for any previous time $s \in [0,t]$. Hence, we say that the spreading speed is locally determined in case $a=0$.

Consider instead the problem \eqref{eq:hj_j11} in case $a \in (0,1)$. Then the minimizing paths are not straight lines in general. In fact, for $1<\sqrt{dr} < \sqrt{a} + \sqrt{1-a}$, if an individual finds itself at the moving front at time $t$, i.e., $x = c_{\rm nlp} t$, then the corresponding minimizing path is a piecewise linear curve connecting $(0,0)$, $(\tau, 2\sqrt{dr} \tau)$, and $(t,x)$, for some $\tau \in (0,t)$ (see Appendix \ref{sec:A} for details). Hence, the individual arriving at the front $x = c_{\rm nlp}t$ at time $t$ does not stay on the front in previous time. In fact, it spends a significant amount of time ahead of the front (by moving with speed $2\sqrt{dr}>c_{\rm nlp}$). Thus the speed $c_{\rm nlp}$ is affected by the quality of habitat well ahead of the actual front, and we say that it is nonlocally determined. In fact, it is nonlocally pulled (see, e.g., \cite{Roques_2015} for the meaning of pulled versus pushed fronts).

We also mention a closely related work, due to Holzer and Scheel \cite{Holzer_2014}, which includes among others the special case $b=0$ of \eqref{eq:1-1}. Their proof relies on linearization at a single moving frame $y=x-2t$ where the linearized problem becomes temporally constant. Such a problem was also studied by \cite{Berestycki_2018,Fang_2016}, where the complete existence and multiplicity of forced traveling waves as well as their attractivity, except for some critical cases, were obtained.  In contrast, our approach can be applied to problems with coefficients depending on multiple moving frames $x - c_i t$ for several $c_i$. This allows the treatment of the spreading of three competing species with different speeds, which will appear in our forthcoming work.

Using Theorem \ref{thm:1-1}, which treats the case $dr>1$, we can derive the following results concerning the remaining cases $dr =1$ and $0 < dr < 1$.
\begin{theorem}\label{thm:1-1'} 
Assume $dr =1$. Let $(u,v)$ be the solution of \eqref{eq:1-1} such that the initial data satisfies
$\rm{(H_{\infty})}$.  Then for each small $\eta>0$, 
\begin{equation}\label{eq:spreadingly1}
\begin{cases}
\lim\limits_{t\rightarrow \infty} \sup\limits_{ x>(2+\eta) t} (|u(t,x)|+|v(t,x)|)=0, \\
\lim\limits_{t\rightarrow \infty} \sup\limits_{(-\tilde{c}_{\textup{LLW}}+\eta)t< x<(2-\eta) t} (|u(t,x)-k_1|+|v(t,x)-k_2|)=0 , \\
 \lim\limits_{t\rightarrow \infty} \sup\limits_{x<(-\tilde{c}_{\textup{LLW}}-\eta)t} (|u(t,x)-1|+|v(t,x)|)=0,  \end{cases}
\end{equation}
where $\tilde{c}_{\textup{LLW}}$ defines the spreading speed of $(k_1,k_2)$ into $(1,0)$ for the system \eqref{eq:1-1} with $dr =1$ as given in Remark \ref{rmk:LLW}.
\end{theorem}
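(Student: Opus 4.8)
The plan is to treat the case $dr=1$ as the endpoint of the family of cases $dr>1$ already covered by Theorem~\ref{thm:1-1}, recovering the individual spreading statements of \eqref{eq:spreadingly1} partly from scalar comparison and partly by running the geometric-optics machinery of Theorem~\ref{thm:1-1} at the critical value $dr=1$. To see why the structure of \eqref{eq:spreadingly1} is the expected one, fix $\delta>0$ and let $(u_\delta,v_\delta)$ solve \eqref{eq:1-1} with $d$ replaced by $d+\delta$ (so $(d+\delta)r>1$); Theorem~\ref{thm:1-1} then produces speeds $c_1^\delta,c_2^\delta,c_3^\delta$. As $\delta\to0^+$ one has $c_1^\delta=2\sqrt{(d+\delta)r}\to2$; from the explicit formula for $c_{\textup{nlp}}$ (applicable since $\sqrt{(d+\delta)r}\le\sqrt a+\sqrt{1-a}$ for $\delta$ small) one gets $c_{\textup{nlp}}^\delta=\sqrt{(d+\delta)r}-\sqrt a+\frac{1-a}{\sqrt{(d+\delta)r}-\sqrt a}\to1-\sqrt a+\frac{1-a}{1-\sqrt a}=2$, and since $c_{\textup{LLW}}\le2$ this forces $c_2^\delta=\max\{c_{\textup{LLW}},c_{\textup{nlp}}^\delta\}\to2$; finally $c_3^\delta=-\tilde c_{\textup{LLW}}^\delta\to-\tilde c_{\textup{LLW}}$ by continuity of the spreading speed of Remark~\ref{rmk:LLW} in the diffusion coefficient. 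Thus the $(0,1)$-plateau window $(c_2^\delta+\eta)t<x<(c_1^\delta-\eta)t$ of Theorem~\ref{thm:1-1} shrinks to the empty set, which is precisely why only three regions remain in \eqref{eq:spreadingly1}.

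The substance is then to prove the three limits directly at $dr=1$. The outer regions require no perturbation. From $v\le\bar v$, with $\bar v$ solving the scalar KPP equation $\partial_t\bar v-d\partial_{xx}\bar v=r\bar v(1-\bar v)$ with datum $v_0$, one gets $v\to0$ for $x>(2\sqrt{dr}+\eta)t=(2+\eta)t$; then $u\le\bar u$ with $\partial_t\bar u-\partial_{xx}\bar u=\bar u(1-\bar u)$ gives $u\to0$ there, yielding the first line. The bound $v\to0$ for $x<(-\tilde c_{\textup{LLW}}-\eta)t$ is exactly the assertion that $v$ invades $(1,0)$ at speed $\tilde c_{\textup{LLW}}$, i.e.\ Remark~\ref{rmk:LLW} at $dr=1$, and combined with $u\to1$ there (once $v$ is small, from a logistic sub-solution for $u$) this gives the third line. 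The middle line $(u,v)\to(k_1,k_2)$ on $(-\tilde c_{\textup{LLW}}+\eta)t<x<(2-\eta)t$ is the substantive one: its left half is again the spreading of $(k_1,k_2)$ into $(1,0)$ (Remark~\ref{rmk:LLW} at $dr=1$), and its right half is the statement that the $u$-front is pulled rightward up to the critical speed $c_{\textup{nlp}}=2$. Since the families $(u_\delta,v_\delta)$ are not ordered relative to $(u,v)$ (perturbing the diffusion destroys the competitive order), I would establish this last point by running the argument of Theorem~\ref{thm:1-1} directly at $dr=1$: the viscosity solution $w_1$ of \eqref{eq:hj_j11}, now with indicator $\chi_{\{x\le2t\}}$, still satisfies $\{w_1=0\}=\{x\le2t\}$, as follows from the Lagrangian formula for $J_1$ --- for $x>2t$ Jensen's inequality gives $J_1(t,x)\ge\frac{x^2}{4t}-t>0$, while for $x\le2t$ a test path that stays marginally ahead of the line $x=2s$ for most of the time and then runs straight to $(t,x)$ has nonpositive action --- after which the remaining steps of Theorem~\ref{thm:1-1} (the WKB/large-deviation comparison converting this into the spreading profile of $u$, and the competitive comparison closing the monotone iteration to $(k_1,k_2)$) apply verbatim, being insensitive to whether $dr>1$ or $dr=1$.

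The main obstacle is the coincidence $c_1=c_2=2$: one must rule out the birth of a $(0,1)$-plateau of positive width, i.e.\ show the $u$-front is not left behind at the linearly determined speed $2\sqrt{1-a}$ (nor at $c_{\textup{LLW}}$) but is dragged all the way up to the $v$-front at speed $2$. In the geometric-optics picture this is the degenerate case in which the optimal trajectory of an individual reaching the $u$-front runs essentially along the $v$-front $x=2s$, where the net Lagrangian $\frac{|\dot\gamma|^2}{4}-1+a=a>0$, yet the infimum defining $J_1$ there equals $0$, attained only in the limit by dashing an infinitesimal amount ahead of the front; propagating this borderline behavior rigorously --- concretely, verifying that the correction terms in the piecewise super-/sub-solutions used for Theorem~\ref{thm:1-1} remain $o(t)$ as the two fronts merge --- is where the care is needed. (The companion range $0<dr<1$, mentioned alongside $dr=1$, is handled separately through a symmetry of \eqref{eq:1-1} and lies outside the present statement.)
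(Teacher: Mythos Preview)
Your proposal diverges from the paper's proof in an essential way, and the divergence stems from a missed idea that makes the argument much cleaner.

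You correctly observe that perturbing $d$ destroys the competitive comparison structure, and you therefore abandon the perturbation route and attempt to rerun the geometric-optics machinery of Section~\ref{S3} directly at $dr=1$. The paper does \emph{not} do this. Instead it perturbs the \emph{reaction term} of the $v$-equation, replacing $rv(1-bu-v)$ by $rv(1\pm\delta-bu-v)$. This perturbation is monotone in the competitive order, so the resulting solutions $(\underline u^\delta,\overline v^\delta)$ and $(\overline u^\delta,\underline v^\delta)$ genuinely sandwich $(u,v)$. After the rescaling $V=v/(1\pm\delta)$ each perturbed system becomes a copy of \eqref{eq:1-1} with new parameters $(\bar a^\delta,\bar r^\delta,\underline b^\delta)$ or $(\underline a^\delta,\underline r^\delta,\overline b^\delta)$; crucially $d\bar r^\delta=(1+\delta)>1$ and $d\underline r^\delta=(1-\delta)<1$, so Theorem~\ref{thm:1-1} applies to one side and Corollary~\ref{rmk:drleq1} to the other. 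Letting $\delta\to0$ and using continuity of $c_{\textup{LLW}}$, $\tilde c_{\textup{LLW}}$, $c_{\textup{nlp}}$ in the parameters gives \eqref{eq:spreadingly1} by squeezing. No new analysis at the critical endpoint is required.

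Your direct approach, by contrast, is not obviously complete. The framework of Section~\ref{S3} is stated under the hypothesis \eqref{eq:v_ep} with $c_1>\tilde c_1\ge2$; at $dr=1$ one has $c_1=2$ and no admissible $\tilde c_1$, so Lemmas~\ref{lem:3-4a} and~\ref{lem:3-4} (which furnish the matching lower bound $w_*\ge J_2$ near the front and hence the exponential estimate of Corollary~\ref{cor3.9}) are not available as written. You could perhaps recover $\underline c_2\ge2$ from Lemma~\ref{lem:3-3} alone, since that half only uses the upper bound on $v^\epsilon$, but you also need $v$ bounded away from zero throughout $(-\tilde c_{\textup{LLW}}+\eta)t<x<(2-\eta)t$ to invoke Lemma~\ref{lem:entire1}, and your sketch does not explain how to get this without the two-front separation that Proposition~\ref{prop:1} used (and which collapses at $dr=1$). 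Your closing paragraph correctly identifies this degeneracy as the crux, but does not resolve it; the paper's comparison trick sidesteps it entirely.
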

The above result can be abbreviated as
\begin{equation*}
(1,0) \,\, \xleftarrow{\,\,\, -\tilde{c}_{\textup{LLW}} \,\,\,}\,\,(k_1,k_2)\,\, \xrightarrow{\,\,\, 2 \,\,\,}\,\,(0,0).\,\
\end{equation*}
Theorem \rm{1.4} implies the invasion process from  $(k_1,k_2)$ into $(0,0)$  does exist, which is related to the results in Tang and Fife \cite{Tang_1980}  where the existence of traveling wave solutions of \rm{(1.1)} connecting $(k_1,k_2)$ to $(0,0)$  was proved.

By switching the roles of $u$ and $v$, it is not difficult to derive the following result in case $dr <1$.
\begin{corollary}\label{rmk:drleq1}
In case $dr <1$, the transition of equilibria becomes $$
(1,0) \,\, \xleftarrow{\,\,\, c_3 \,\,\,}\,\,(k_1,k_2)\,\, \xrightarrow{\,\,\, c_2 \,\,\,}\,\,(1,0)\,\, \xrightarrow{\,\,\, c_1 \,\,\,}\,\,(0,0).
$$
Precisely, the spreading speeds $c_1,\,c_2,\,c_3$ can be determined as follows:
$$c_1 = 2, \,c_2 = \max\{\tilde{c}_{\textup{LLW}},\tilde {c}_{\textup{nlp}}\} \, \text{ and }  c_3 = -\tilde{c}_{\textup{LLW}}, $$
where
 \begin{equation}\label{eq:tildec_acc-1}
\tilde {c}_{\textup{nlp}}= \left\{
\begin{array}{ll}
\medskip
1-\sqrt{drb}+ \frac{dr(1-b)}{1-\sqrt{drb}},  & \qquad  \qquad \text{ if }\,\,1 \leq \sqrt {dr}(\sqrt{b}+\sqrt{1-b}),\\
2\sqrt{dr(1-b)}, & \qquad \qquad \text{ if }\,\,1 >\sqrt {dr}(\sqrt{b}+\sqrt{1-b}).
\end{array}
\right.
\end{equation}
\end{corollary}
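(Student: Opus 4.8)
The plan is to deduce Corollary~\ref{rmk:drleq1} from Theorem~\ref{thm:1-1} by interchanging the two species. Given a solution $(u,v)$ of \eqref{eq:1-1} with $dr<1$ and data satisfying $\rm{(H_\infty)}$, set
\[
\hat u(\hat t,\hat x):=v(t,x),\qquad \hat v(\hat t,\hat x):=u(t,x),\qquad \hat t:=rt,\quad \hat x:=\sqrt{r/d}\,x .
\]
A direct computation shows that $(\hat u,\hat v)$ solves \eqref{eq:1-1} with parameters $(\hat d,\hat r,\hat a,\hat b)=(1/d,\,1/r,\,b,\,a)$, so that now $\hat d\hat r=1/(dr)>1$ and still $\hat a,\hat b\in(0,1)$; the equilibrium $(1,0)$ for $(u,v)$ corresponds to $(0,1)$ for $(\hat u,\hat v)$ and vice versa, and the coexistence state becomes $(\hat k_1,\hat k_2)=(k_2,k_1)$. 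A front $\{x=ct\}$ becomes $\{\hat x=(c/\sqrt{dr})\,\hat t\}$, so speeds rescale by $1/\sqrt{dr}$, and one checks by a routine computation that $2\sqrt{\hat d\hat r}$ pulls back to $c_1=2$, that the LLW speed for the $\hat{}$-system pulls back to $\tilde c_{\textup{LLW}}$ (the speed of $(k_1,k_2)$ into $(1,0)$ for the original system, i.e. the chasing speed of the slower species $v$, given by Remark~\ref{rmk:LLW}), and that \eqref{eq:c_acc-1} evaluated at $(\hat d,\hat r,\hat a)$ pulls back to \eqref{eq:tildec_acc-1}. I would dispose of these bookkeeping identities first.

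The only genuinely new point is that the transformed data does \emph{not} satisfy $\rm{(H_\infty)}$: here $\hat u_0=v_0$ is the nontrivial, compactly supported bump, while $\hat v_0(\hat x)=u_0(\sqrt{d/r}\,\hat x)\ge\theta_0$ on $(-\infty,0]$ and vanishes on a right half-line, so it is the \emph{fast} species $\hat v$ (recall $\hat d\hat r>1$) that is established on a left half-line and the \emph{slow} species $\hat u$ that is the bump --- the reverse of the situation in Theorem~\ref{thm:1-1}. Hence Theorem~\ref{thm:1-1} cannot be quoted verbatim, and I would instead revisit the three ingredients of its proof. First, the leading interface: since $\hat u$ has compact support and $\hat v_0$ vanishes on a right half-line, the habitat ahead of $\hat v$'s interface is empty after a finite time, so the same scalar KPP sub-/super-solutions give a leading front at speed $2\sqrt{\hat d\hat r}$ with $(0,1)$ behind it (the lower bound on the front position uses, exactly as in Theorem~\ref{thm:1-1}, that $\hat u$ cannot keep pace). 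Second, the chasing front of $\hat u$: the governing Hamilton--Jacobi problem is \eqref{eq:hj_j11} with $\hat{}$-coefficients but with initial datum $w(0,\cdot)=0$ on a bounded interval containing $\operatorname{supp}\hat u_0$ (rather than on $(-\infty,0]$) and $=\infty$ elsewhere; in the associated control representation the only change from $J_1$ is that the path's starting point now ranges over that bounded interval, which merely translates the zero set of $w$ by an $O(1)$ amount. Thus the free boundary is still $\hat x=\hat c_2\hat t+O(1)$ on the right with $\hat c_2=\max\{\hat c_{\textup{LLW}},\hat c_{\textup{nlp}}\}$ (the $\hat{}$-analogues of $c_{\textup{LLW}},c_{\textup{nlp}}$), and $\hat x=-\hat c_{\textup{LLW}}\hat t+O(1)$ on the left; these $O(1)$ shifts are invisible in \eqref{eq:spreadingly}, and the sub-/super-solution construction of Theorem~\ref{thm:1-1} carries over. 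Third, the far-left region: there $\hat u_0=v_0=0$ and $\hat v_0\ge\theta_0$, so $\hat v$ settles to $1$ and $\hat u$ invades the homogeneous background $(0,1)$ leftward; comparison with a solution as in Theorem~\ref{thm:LLW} (after $x\mapsto-x$) shows this invasion runs at the LLW speed $\hat c_{\textup{LLW}}$ with no nonlocal enhancement (the background ahead being constant), yielding the far-left region $(0,1)$ and the coexistence strip $(\hat k_1,\hat k_2)$ in between. Assembling and undoing the change of variables gives the claimed transition with $c_1=2$, $c_2=\max\{\tilde c_{\textup{LLW}},\tilde c_{\textup{nlp}}\}$, $c_3=-\tilde c_{\textup{LLW}}$.

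I expect the main obstacle to be precisely this reconciliation of the reversed configuration with $\rm{(H_\infty)}$: namely, verifying that the geometric-optics estimate for the slow species' front is insensitive to replacing ``established on $(-\infty,0]$'' by ``compactly supported'' (only an $O(1)$ translation of the relevant free boundary is involved), and separately treating the new far-left region and the slow species' leftward interface by an LLW comparison. If one prefers not to re-open the proof of Theorem~\ref{thm:1-1}, an alternative is to obtain one direction of every estimate for free: there is an $\rm{(H_\infty)}$-type solution lying \emph{above} $(\hat u,\hat v)$ in the competitive order (enlarge $\hat u_0$ so as to be established on $(-\infty,0]$ and still vanish far to the right, and shrink $\hat v_0$ to a nontrivial bump below it), and Theorem~\ref{thm:1-1} applied to it already pins down $\{\hat x>\hat c_1\hat t\}\to(0,0)$ and $\{\hat c_2\hat t<\hat x<\hat c_1\hat t\}\to(0,1)$ and gives one-sided bounds in the remaining strips; the matching lower bounds on the spread of $\hat u$, together with the far-left behavior, then remain as the only genuinely new input, to be supplied by the argument sketched above.
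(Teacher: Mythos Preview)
Your approach is exactly what the paper intends: the paper's entire ``proof'' of Corollary~\ref{rmk:drleq1} is the single sentence ``By switching the roles of $u$ and $v$, it is not difficult to derive the following result in case $dr<1$,'' with no further details. Your rescaling $(\hat u,\hat v)=(v,u)$, $\hat t=rt$, $\hat x=\sqrt{r/d}\,x$, $(\hat d,\hat r,\hat a,\hat b)=(1/d,1/r,b,a)$ is the correct formalization of that sentence, and your pullback computations for the speeds are right.

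You have in fact gone beyond the paper by noticing a genuine subtlety that the paper passes over in silence: after the swap, the transformed data places the \emph{fast} species $\hat v$ on the left half-line and the \emph{slow} species $\hat u$ as the bump, which is the reverse of $\rm(H_\infty)$, so Theorem~\ref{thm:1-1} does not apply verbatim and the far-left limit becomes $(0,1)$ rather than $(1,0)$. Your resolution---that the geometric-optics/H--J argument for the rightward chasing front $\hat c_2$ is insensitive (up to an $O(1)$ shift) to whether $\hat u_0$ is supported on a half-line or a compact interval, while the leftward front of $\hat u$ into the homogeneous $(0,1)$ background has no nonlocal pulling and is governed purely by $\hat c_{\textup{LLW}}$---is correct and is precisely the missing justification. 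Remark~\ref{rem_2.1} in the paper hints that the authors are aware the LLW-type comparisons in Proposition~\ref{prop:1} are robust to such changes in the data, but they do not spell out the adaptation you describe. Your proposal is therefore sound and more complete than the paper's own treatment.
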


\begin{remark}
As in \cite{Evans_1989}, our approach can be applied to the spreading problem of competing species in higher dimensions under minor modifications. However, we choose to focus here on the one-dimensional case to keep our exposition simple, and close to the original formulation of the conjecture in \cite[{Ch. 7}]{Shigesada_1997}.
\end{remark}

\begin{remark}
 We also mention here some related works concerning competition systems \cite{Du_2018,Guo_2015,Liu_2019,Wang_2017,Wang_2018,Wu_2015} with Stefan-type moving boundary conditions. Therein some estimates of asymptotic speeds of the moving boundaries were proved. In contrast to the Cauchy problem considered here, there are no far-fields effect in such moving boundary problems.
\end{remark}

\subsection{Numerical simulation of main results}
The asymptotic behaviors of the solutions to \eqref{eq:1-1} for the three cases: \rm{(a)} $dr>1$, \rm{(b)} $dr=1$, \rm{(c)} $0<dr<1$ are illustrated in Figure \ref{figure1}. Precisely, {\rm{(a)}} with $d=1.5$ shows that the solutions of \eqref{eq:1-1}
behave as predicted by Theorem \ref{thm:1-1}. Therein, species $v$ spreads faster than species $u$, i.e., $c_1=2\sqrt{dr}>2\geq c_2$. \rm{(b)} with $d=1$ corresponds to Theorem \ref{thm:1-1'}, where $c_1=c_2=2$. Finally, \rm{(c)} with $d=0.5$ means that species $u$ spreads faster than species $v$, i.e., $c_1=2>c_2$ as discussed in Corollary \ref{rmk:drleq1}. Due to the limitation of our methods, we can't get the asymptotic profiles of \eqref{eq:1-1}.
\begin{figure}[htp]
\begin{center}
\includegraphics[width=1.0\textwidth]{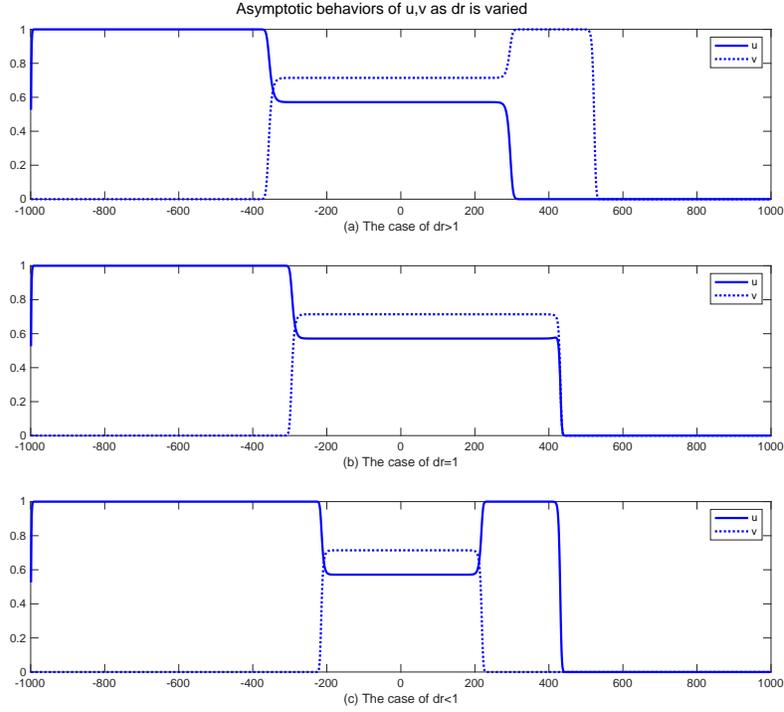}
 \caption{Asymptotic behaviors of the solutions to \eqref{eq:1-1} with $a=0.6,\, b=0.5,\,r=1$, and $d=1.5$ in \rm{(a)}, $d=1$ in \rm{(b)}, $d=0.5$ in \rm{(c)}, where the initial data are chosen as  $u(0,x)=\chi_{[-1000,0]}$ and $v(0,x)=\chi_{[-20,0]}$.
} \label{figure1}
 \end{center}
\end{figure}

In what follows, we present some numerics to illustrate the formulas of $c_1,c_2$ and $c_3$ given in Theorem \ref{thm:1-1}. 
Set $a=0.6,\, b=0.5,\,r=1$ and $d=1.5$ as in Figure \ref{figure1}{\rm{(a)}}, whereby the sufficient conditions for linear determinacy given by \cite[Theorem 2.1]{Lewis_2002} are satisfied.  The theoretical results in Theorem \ref{thm:1-1} assert that
\begin{equation*}
  \begin{cases}
c_1=2\sqrt{dr}\approx 2.4495,\\ 
c_2=\max\{c_{\textup{LLW}},c_{\text{nlp}}\} \approx\max\{1.265,1.3387\}
\approx1.3387,\\
c_3=\tilde{c}_{\textup{LLW}}=-2\sqrt{dr(1-b)}\approx-1.7321,
  \end{cases}
\end{equation*}
where, in determining $c_2$, we used the facts that {\rm(i)} $c_{\textup{LLW}}=2\sqrt{1-a}\approx 1.265$ is linearly determined \cite[Theorem 2.1]{Lewis_2002}; {\rm(ii)} $\sqrt{dr}\approx 
1.2248<\sqrt{a}+\sqrt{1-a}\approx1.407$ so that $c_{\text{nlp}}=\frac{c_1}{2}-\sqrt{a}+\frac{1-a}{\frac{c_1}{2}-\sqrt{a}} \approx 1.3387$.

 Denote
 $$x_1(t)=\sup\{x~|~v(t,x)>0.6\},\quad x_2(t)=\sup\{x~|~u(t,x)>0.4\},$$
 $$\quad  x_3(t)=\inf\{x~|~u(t,x)<0.7\}.$$

The graphs of $x_i(t)/t$ ($i=1,2,3 $) are shown in Figure \ref{figure2}. They indicate that, indeed, $x_i(t)/t\to c_i$ as $t\to\infty$. In fact, at $t=200$,  $x_1(t)/t\approx 2.4452$ comparing to the theoretical value $c_1\approx 2.4495$; $x_2(t)/t \approx 1.3695$  comparing to $c_2\approx1.3387$; and $x_3(t)/t \approx -1.7214$ comparing to $c_3\approx-1.7321$ in Theorem \ref{thm:1-1}. Note that we expect an error of $O(t^{-1}\log t)$ between the approximated value $x_i(t)/t$ and the theoretical value $c_i$.  Thus the formulas of $c_1,c_2,c_3$ provided in Theorem \ref{thm:1-1} are confirmed by Figure \ref{figure2}.

\begin{figure}[htp]
\begin{center}
  \includegraphics[width=1.0\textwidth]{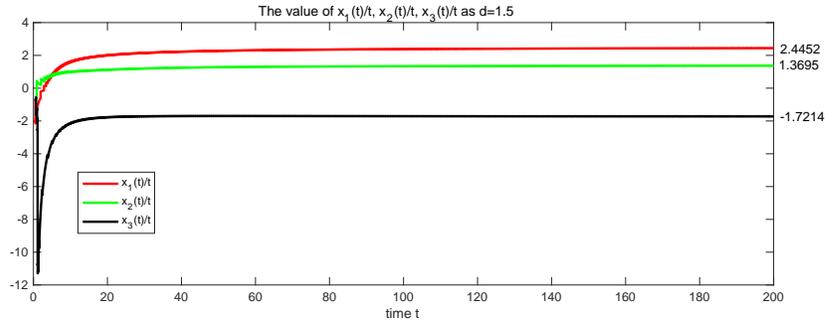}\\
   \caption{The graphs of $x_i(t)/t$ ($i=1,2,3 $) with $a=0.6,\, b=0.5,\,r=1$ and $d=1.5$ where the initial data are chosen as  $u(0,x)=\chi_{[-1000,0]}$ and $v(0,x)=\chi_{[-20,0]}$. } \label{figure2}
  \end{center}
\end{figure}

\subsection{Outline of main ideas}
We outline the main steps leading to the determination of the nonlocally pulled spreading speed $c_2$, as stated Theorem \ref{thm:1-1}. (The other spreading speeds $c_1, c_3$ can be determined by standard methods as in \cite{Girardin_2018}, see Proposition \ref{prop:1}.)
\begin{enumerate}
\item  To estimate $c_2$ from below, we consider the transformation 
$$w^\epsilon(t,x) = -\epsilon \log u\left( \frac{t}{\epsilon}, \frac{x}{\epsilon}\right)$$ and show that the half-relaxed limits
$$
w_*(t,x) =  \hspace{-0.3cm} \liminf_{\scriptsize \begin{array}{c}\epsilon \to 0\\ (t',x') \to (t,x)\end{array}} \hspace{-0.3cm} w^\epsilon(t',x') \quad \text{ and } \quad w^*(t,x) =  \hspace{-0.3cm}  \limsup_{\scriptsize \begin{array}{c} \epsilon \to 0 \\ (t',x') \to (t,x) \end{array}}  \hspace{-0.3cm}  w^\epsilon(t',x')$$ exist, upon establishing uniform bounds in $C_{\mathrm{loc}}((0,\infty) \times \mathbb{R})$ (see Lemma \ref{lem:3-1}). By  the comparison principle, we show that
\begin{equation}\label{eq:upperlimitw}
0 \leq w_* \leq w^* \leq  w_1 \quad \text{ in }(0,\infty)\times\mathbb{R},
\end{equation}
where $w_1$ is the viscosity solution of the Hamilton-Jacobi equation \eqref{eq:hj_j11}. Solving
$w_1$ explicitly by way of its
 variational characterization, we have
$$
\{(t,x):w_1(t,x) =0\} = \{ (t,x): t>0 \, \text{ and }\, x \leq  {c}_{\textup{nlp}} t\}.
$$
Thus $w^\epsilon(t,x) = -\epsilon \log u\left(\frac{t}{\epsilon}, \frac{x}{\epsilon}\right) \to 0$ in $\{(t,x): \, x <  {c}_{\textup{nlp}} t\}$ locally uniformly. One can then apply the arguments in \cite[Section 4]{Evans_1989} to show that
$$
\liminf_{\epsilon \to 0} u\left( \frac{t}{\epsilon}, \frac{x}{\epsilon}\right) >0 \text{ in } \{ (t,x): t>0 \, \text{ and }\, x <  {c}_{\textup{nlp}} t\}.
$$
This implies that ${c}_2 \geq {c}_{\textup{nlp}}$ (see Proposition \ref{thm:3.6}).

\item To estimate $c_{2}$ from above, we observe that, for some $\delta^*>0$, $w_* \geq w_1$ in
$\{ (t,x): \,  x \geq (2\sqrt{dr} - \delta^*)t\}$. Hence, together with  \eqref{eq:upperlimitw} we obtain a large deviation estimate of $u$. Namely, for $\hat{c}  = 2\sqrt{dr} - \delta^*$,
$$
u(t, \hat{c} t) \leq \exp \left( -[ \hat \mu + o(1)]t \right) \quad \text{ for } t \gg 1,
$$
where $\hat\mu = w_1(1, \hat{c})$. Now, recalling $(u,v)$ is a solution to \eqref{eq:1-1} restricted to the domain $\{(x,t): 0\leq x \leq \hat{c} t\}$, with boundary condition satisfying
$$
\lim_{t\to\infty} (u,v)(t,0) = (k_1,k_2) \quad \text{ and }\quad \lim_{t\to\infty} (u,v)(t, \hat{c} t) = (0,1),
$$
we may compare, within the domain $\{(x,t): 0\leq x \leq \hat{c} t\}$, the solution $(u,v)$ with suitable traveling wave solutions connecting $(k_1,k_2)$ with $(0,1)$ to control the spreading speed $c_2$ of $u$ from above (Lemma \ref{lem:appen1}).

\end{enumerate}

\subsection{Organization of the paper}

In Section \ref{sec:2}, we determine $c_1, c_3$ and give rough estimates of $c_2$. In Section \ref{S3}, we establish the approximate asymptotic expression of $u$ and then determine ${c}_2$ in Section \ref{sec:4}. This completes the proof of Theorem \ref{thm:1-1}. In Section \ref{sec:5}, Theorem \ref{thm:1-1'} is derived as a limiting case of Theorem \ref{thm:1-1}.
To improve the exposition of ideas, we postpone the proofs of Lemma \ref{lem:appen1} and Proposition \ref{prop:B2} to the Appendix.

\section{Preliminaries}\label{sec:2}

We define the maximal and minimal spreading speeds as follows (see also \cite[Definition 1.2]{Hamel2012} where related concepts were introduced for a single species):
\begin{equation*}
\begin{cases}
\smallskip
\overline{c}_1=\inf{\{c>0~|~\limsup \limits_{t\rightarrow \infty}\sup\limits_{x>ct} v(t,x)=0\}},\\
\smallskip
\underline{c}_1=\sup{\{c>0~~|\liminf \limits_{t\rightarrow \infty}\inf\limits_{ct-1<x<ct} v(t,x)>0\}}, \\
\smallskip
\overline{c}_2=\inf\{c>0~|~\limsup\limits_{t\rightarrow \infty}\sup\limits_{ x> ct}u(t,x)=0\}, \\
\smallskip
\underline{c}_2=\sup{\{c>0~~|\liminf \limits_{t\rightarrow \infty}\inf\limits_{ct-1<x<ct} u(t,x)>0\}},\\
\smallskip
\overline{c}_3=\inf\{c<0~|~\liminf\limits_{t\rightarrow \infty}\inf\limits_{ ct<x< ct+1}v(t,x) >0\},\\
\underline{c}_3=\sup\{c<0~|~\limsup\limits_{t\rightarrow \infty}\sup\limits_{ x< ct}v(t,x)=0\}.
 \end{cases}
\end{equation*}
Here $\overline{c}_1$ and $\underline{c}_1$ (resp.  $\overline{c}_2$ and $\underline{c}_2$) are the maximal and minimal rightward spreading speeds of species $v$ (resp. species $u$), whereas  $-\underline{c}_3$ and $-\overline{c}_3$ are the maximal and minimal leftward spreading speeds of $v$.

In this section, we will determine $\overline{c}_1=\underline{c}_1$ and $\overline{c}_3=\underline{c}_3$, and give some rough estimates of $\underline{c}_2$ and $\overline{c}_2$. We will also show that the solution $(u,v)$ of \eqref{eq:1-1} approaches one of the homogeneous equilibria in between successive spreading speeds. Recalling the definition of $c_1=2\sqrt{dr}$ and $c_3 = -\tilde{c}_{\rm LLW}$ in \eqref{eq:c1c2c3}, the main result of this section can be precisely stated as follows. 

\begin{proposition}\label{prop:1} 
Assume $dr > 1$.
Let $(u,v)$ be the solution of \eqref{eq:1-1} with initial data  $(u_0, v_0)$ satisfying {\rm $\rm{(H_{\infty})}$}.  Then 
\begin{itemize}
\item[{\rm(i)}] $\overline{c}_1 = \underline{c}_1 = c_1$ and $\overline{c}_3 = \underline{c}_3 = c_3$;

\item[{\rm(ii)}]  ${c}_{\textup{LLW}} \leq \underline{c}_2\leq \overline{c}_2\leq 2$;

\item[{\rm(iii)}] For each small $\eta>0$, the following spreading results hold:
\begin{subequations}\label{eq:spreadinglyprop}
\begin{align}
&\lim\limits_{t\to \infty} \sup\limits_{ x>(c_1+\eta) t} (|u(t,x)|+|v(t,x)|)=0, \label{eq:spreadinglyprop_a} \\
&\lim\limits_{t\to \infty} \sup\limits_{(\overline{c}_2+\eta) t< x<(c_1-\eta) t} (|u(t,x)|+|v(t,x)-1|)=0,\label{eq:spreadinglyprop_b} \\
&\lim\limits_{t\to\infty} \sup\limits_{(c_3+\eta)t< x<(\underline{c}_2-\eta) t} (|u(t,x)-k_1|+|v(t,x)-k_2|)=0, \label{eq:spreadinglyprop_c} \\
&\lim\limits_{t\rightarrow \infty} \sup\limits_{x<({c_3}-\eta)t} (|u(t,x)-1|+|v(t,x)|)=0,\label{eq:spreadinglyprop_d} 
\end{align}
\end{subequations}
\end{itemize}
where ${c}_{\textup{LLW}},\, \tilde{c}_{\textup{LLW}}$ are given in Theorem \ref{thm:LLW} and Remark \ref{rmk:LLW}, respectively.
\end{proposition}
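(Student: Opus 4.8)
The plan is to prove Proposition \ref{prop:1} by combining standard comparison arguments with the known single-front spreading results (Theorem \ref{thm:LLW} and Remark \ref{rmk:LLW}), proceeding in several stages and using a bootstrap between the spreading bounds and the convergence to homogeneous equilibria.

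\textbf{Step 1: Determine $\overline{c}_1 = \underline{c}_1 = c_1 = 2\sqrt{dr}$ and the first spreading region.} Since $0 \le u$, the second equation gives $\partial_t v - d\partial_{xx}v \le rv(1-v)$, so $v$ is a subsolution of a Fisher--KPP equation with compactly supported initial data; this yields $\limsup_{t\to\infty}\sup_{x > (2\sqrt{dr}+\eta)t} v(t,x) = 0$, and combined with the parabolic estimate on $u$ driven to $0$ ahead of the $v$-front (since $u$ satisfies $\partial_t u - \partial_{xx} u \le u(1-u)$ and, more importantly, lies below a front moving with speed $2 < 2\sqrt{dr}$ once $v$ is established — or more simply, $u$ itself spreads at speed at most $2$ by the KPP comparison $\partial_t u - \partial_{xx}u \le u(1-u)$), we get \eqref{eq:spreadinglyprop_a}. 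For the lower bound $\underline{c}_1 \ge 2\sqrt{dr}$: in the region where $u$ is small (which holds for $x$ slightly to the right of where $u$ has spread, i.e., $x > (2+\eta')t$, by the upper estimate $\overline{c}_2 \le 2$ established in Step 2), $v$ is a supersolution of $\partial_t v - d\partial_{xx}v \ge rv(1 - \sigma - v)$ for small $\sigma$, hence spreads at speed at least $2\sqrt{dr(1-\sigma)}$ into the region ahead; letting $\sigma \to 0$ gives $\underline c_1 \ge 2\sqrt{dr}$. Combined with $\overline{c}_1 \le 2\sqrt{dr}$ this gives (i) for $c_1$. The convergence $v \to 1$ in $(\overline c_2 + \eta)t < x < (c_1 - \eta)t$, i.e. \eqref{eq:spreadinglyprop_b}, then follows because in that region $u \to 0$ (by definition of $\overline c_2$ and a squeeze using $\partial_t u - \partial_{xx}u \le u(1-u)$ plus the finite-speed estimate) and $v$ sits between a KPP subsolution and supersolution both converging to $1$.

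\textbf{Step 2: Rough estimates ${c}_{\textup{LLW}} \le \underline c_2 \le \overline c_2 \le 2$.} The upper bound $\overline c_2 \le 2$ is immediate from $\partial_t u - \partial_{xx} u \le u(1-u)$ and the KPP spreading theorem since $u_0$ has support bounded to the right (by $\rm{(H_\infty)}$, $u_0 = 0$ on $[x_0,\infty)$). For the lower bound $\underline c_2 \ge {c}_{\textup{LLW}}$: the key is to set up a comparison on a moving half-line. Once we know (from Step 1 and Step 3 of the bootstrap) that $(u,v) \to (0,1)$ in a region like $(2+\eta)t < x < (c_1-\eta)t$ and that near $x = 0$ one has $(u,v) \to (k_1,k_2)$, we can, on the region to the left of $x = (2 + \eta)t$, compare $(u,v)$ from below with a shifted solution of \eqref{eq:1-1} whose initial data is a compact perturbation of $(0,1)$ of the type in Theorem \ref{thm:LLW}; that comparison solution has $u$-component spreading at speed ${c}_{\textup{LLW}}$, giving $\underline c_2 \ge {c}_{\textup{LLW}}$. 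Here one uses the competitive (order-preserving on $(u, 1-v)$) structure of \eqref{eq:1-1}.

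\textbf{Step 3: Determine $\overline c_3 = \underline c_3 = c_3 = -\tilde{c}_{\textup{LLW}}$ and the last two spreading regions, and close the bootstrap.} By $\rm{(H_\infty)}$, to the far left $u_0 \ge \theta_0 > 0$ on $(-\infty, 0]$ and $v_0$ has compact support, so the initial data is (after a translation) a compact perturbation of $(1,0)$ on a left half-line; by Remark \ref{rmk:LLW} (applied with the roles of the half-lines reflected) the species $v$ invades leftward into $(1,0)$ at speed $\tilde{c}_{\textup{LLW}}$, giving $\overline c_3 = \underline c_3 = -\tilde{c}_{\textup{LLW}}$ and, on the left region $x < (c_3 - \eta)t$, the convergence $(u,v) \to (1,0)$, which is \eqref{eq:spreadinglyprop_d}; while in the middle region $(c_3+\eta)t < x < (\underline c_2 - \eta)t$ the convergence $(u,v)\to(k_1,k_2)$, i.e. \eqref{eq:spreadinglyprop_c}, follows from a squeeze between sub/supersolutions of the full system converging to the stable coexistence state $(k_1,k_2)$ (this uses $0 < a, b < 1$ and the stability of $(k_1,k_2)$, via the standard construction of time-dependent sub/supersolutions or an application of the convergence result for the ODE $\dot U = U(1 - U - aV)$, $\dot V = rV(1 - bU - V)$ whose basin of attraction of $(k_1,k_2)$ contains the relevant box). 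The bootstrap is: Steps 1--2 give $v\to1$ right of $x = \overline c_2 t$; this plus the structure near $x=0$ feeds Step 2's lower bound on $\underline c_2$; this plus Step 3 gives the three regions, all consistent.

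\textbf{Main obstacle.} The delicate point is the lower bound $\underline c_2 \ge {c}_{\textup{LLW}}$ in part (ii): one must justify comparison with a translate of a solution whose initial data is a compact perturbation of $(0,1)$ on the moving region left of the $v$-front, which requires first knowing that $(u,v)$ is close to $(0,1)$ on a suitable moving strip and close to $(k_1,k_2)$ near the origin — i.e. the regions \eqref{eq:spreadinglyprop_b} and \eqref{eq:spreadinglyprop_c} must be established with enough uniformity before the comparison can be launched, so the ordering of the steps and the self-consistency of the bootstrap must be handled carefully. The convergence to $(k_1,k_2)$ in the middle region, while standard, also requires care because the ``boundaries'' of that region are themselves moving and only asymptotically of the stated speed; one handles this by working on slightly narrower cones $(c_3+2\eta)t < x < (\underline c_2 - 2\eta)t$ and using parabolic interior estimates together with a sliding-domain argument.
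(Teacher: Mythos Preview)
Your proposal misidentifies the difficult step and contains a genuine gap elsewhere.

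The bound you flag as the ``main obstacle'', namely $\underline{c}_2 \geq c_{\textup{LLW}}$, requires no bootstrap at all. Since $v_0 \leq 1$ globally, pick any non-trivial compactly supported $\tilde u_0$ with $0 \leq \tilde u_0 \leq u_0$ and compare $(u,v)$, from time $0$ and on all of $\mathbb{R}$, with the solution of \eqref{eq:1-1} starting from $(\tilde u_0, 1)$. The competitive order gives $u \geq u_{\textup{LLW}}$ everywhere, and Theorem~\ref{thm:LLW} applied to the latter yields $\underline c_2 \geq c_{\textup{LLW}}$ in one line. No moving strips, no prior convergence to $(0,1)$ or $(k_1,k_2)$ is needed; this is exactly how the paper handles it.

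The real gap is in your Step~3, specifically the inequality $\underline{c}_3 \geq -\tilde{c}_{\textup{LLW}}$, i.e.\ $v(t,x) \to 0$ for $x < (-\tilde{c}_{\textup{LLW}} - \eta)t$. Remark~\ref{rmk:LLW} concerns initial data that are a compact perturbation of $(1,0)$ on the whole line, and under $\rm{(H_\infty)}$ the datum $(u_0,v_0)$ is not such a perturbation: $u_0$ vanishes on $[x_0,\infty)$, and no ``reflection of half-lines'' repairs this. To bound $v$ from above by competitive comparison you would need a pair $(\tilde u_0,\tilde v_0)$ with $\tilde u_0 \leq u_0$ and $\tilde v_0 \geq v_0$, but then $\tilde u_0$ must also vanish on $[x_0,\infty)$, so Remark~\ref{rmk:LLW} still does not apply to the comparison solution. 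The global comparison only delivers the easy direction $\overline{c}_3 \leq -\tilde{c}_{\textup{LLW}}$ (via initial data $(1,v_0)$). For the other direction the paper first obtains an exponential decay estimate $v(t,-\hat c t) \leq e^{-(\hat\mu+o(1))t}$ for some $\hat c > 2\sqrt{dr}$ and some $\hat\mu>0$ large enough, establishes the boundary limits $(u,v)(t,0) \to (k_1,k_2)$ and $(u,v)(t,-\hat c t) \to (1,0)$, and then compares $(u,v)$ on the wedge $\{-\hat c t \leq x \leq 0\}$ with a traveling wave of speed slightly above $\tilde{c}_{\textup{LLW}}$ connecting $(k_1,k_2)$ to $(1,0)$ (Lemma~\ref{lem:appen1}(b)). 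This is where the actual work lies, and your outline does not supply it.
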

\begin{remark}
 Proposition \ref{prop:1} is proved in \cite{Lin_2012} under the stronger assumption $dr(1-b) >1$.
\end{remark}

Before estimating the spreading speeds of species, we first give a lemma concerning the behaviors of $(u,v)$ between the spreading fronts.
\begin{lemma}\label{lem:entire1}
Let $-\infty\leq\underline c<\overline c\leq\infty$ be fixed, and
let $(u,v)$ be a solution of \eqref{eq:1-1} in $\{(t,x): \underline{c} t \leq x \leq \overline{c}t\}$.
\begin{itemize}
\item[{\rm(a)}]If
$\displaystyle \liminf_{t \to \infty} \inf_{(\underline c + \delta) t < x < (\overline c - \delta)t} v(t,x) >0$ for each $0<\delta < (\overline{c}-\underline{c})/2$, then
 $$\displaystyle \limsup_{t \to \infty} \sup_{(\underline c + \delta) t < x < (\overline c - \delta)t}u(t,x) \leq k_1, \quad \liminf_{t \to \infty} \inf_{(\underline c + \delta) t < x < (\overline c - \delta)t}v(t,x) \geq k_2,$$
 for each $0<\delta < (\overline{c}-\underline{c})/2$;

 \item[{\rm(b)}] If $\displaystyle \lim_{t \to \infty} \sup_{(\underline c + \delta) t < x < (\overline c - \delta)t} u(t,x)=0$ and $\displaystyle \liminf_{t \to \infty} \inf_{(\underline c + \delta) t < x < (\overline c - \delta)t} v(t,x) >0$  for each $0 < \delta < (\overline{c}-\underline{c})/2$, then
     $$\displaystyle \lim_{t \to \infty} \sup_{(\underline c + \delta) t < x < (\overline c - \delta)t }|v(t,x) -1|=0, \quad \text{ for each }0 < \delta < (\overline{c}-\underline{c})/2;$$

\item[{\rm(c)}]
If $\displaystyle \liminf_{t \to \infty} \inf_{(\underline c + \delta) t < x < (\overline c - \delta)t} u(t,x) >0$ for each $0 < \delta < (\overline{c}-\underline{c})/2$, then
 $$\displaystyle \liminf_{t \to \infty} \inf_{(\underline c + \delta) t < x < (\overline c - \delta)t}u(t,x) \geq k_1, \quad \limsup_{t \to \infty} \sup_{(\underline c + \delta) t < x < (\overline c - \delta)t}v(t,x) \leq k_2
 $$
 for each $0<\delta < (\overline{c}-\underline{c})/2$;

\item[{\rm(d)}] If $\displaystyle \lim_{t \to \infty} \sup_{(\underline c + \delta) t < x < (\overline c - \delta)t} v(t,x)=0$ and $\displaystyle \liminf_{t \to \infty} \inf_{(\underline c + \delta) t < x < (\overline c - \delta)t} u(t,x) >0$ for each $0<\delta < (\overline{c}-\underline{c})/2$, then
    $$\displaystyle \lim_{t \to \infty} \sup_{(\underline c + \delta) t < x < (\overline c - \delta)t}|u(t,x) -1|=0, \quad \text{ for each }0 < \delta < (\overline{c}-\underline{c})/2.$$

\end{itemize}
\end{lemma}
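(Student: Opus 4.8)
The plan is to reduce all four parts to a single scalar comparison statement and to prove that statement by a blow-up argument combined with a Liouville-type property for entire sub/supersolutions of the logistic equation. Throughout we use that $0\le u,v\le1$ on the whole cone (the constants $0$ and $1$ are, respectively, a sub- and a supersolution of each scalar equation, and this order is preserved by the parabolic comparison principle). The heart of the matter is the following one-species claim: let $D>0$, $m>0$, let $w\ge0$ be bounded and solve a uniformly parabolic equation on $\{\underline ct\le x\le\overline ct\}$, and suppose that for every $0<\delta<(\overline c-\underline c)/2$ the inequality $\partial_t w-D\partial_{xx}w\le w(m-w)$ holds on $\{(\underline c+\delta)t<x<(\overline c-\delta)t\}$ for all large $t$; then $\limsup_{t\to\infty}\sup_{(\underline c+\delta)t<x<(\overline c-\delta)t}w(t,x)\le m$ for every such $\delta$. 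There is a mirror version: if instead the reverse inequality $\partial_t w-D\partial_{xx}w\ge w(m-w)$ holds on interior subcones for all large $t$ \emph{and} $\liminf_{t\to\infty}\inf_{(\underline c+\delta)t<x<(\overline c-\delta)t}w>0$ for each $\delta$, then $\liminf_{t\to\infty}\inf_{(\underline c+\delta)t<x<(\overline c-\delta)t}w\ge m$.

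To prove the claim, fix $\delta$ and pick $(t_n,x_n)$ with $t_n\to\infty$ and $(\underline c+\delta)t_n<x_n<(\overline c-\delta)t_n$ realizing the relevant $\limsup$ (resp.\ $\liminf$), and set $w_n(s,y):=w(t_n+s,x_n+y)$. Since the interior subcones have width growing linearly in $t$ while $(t_n,x_n)$ stays at linear distance from the lateral boundaries, any fixed bounded subset of $\mathbb{R}^2$ is contained in $\{(\underline c+\delta/2)t<x<(\overline c-\delta/2)t\}$ after translation by $(t_n,x_n)$ for all large $n$ (the cases $\underline c=-\infty$ or $\overline c=+\infty$ being easier). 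Interior parabolic estimates --- first $L^p$, using that the right-hand sides of the $u$- and $v$-equations are bounded, then Schauder once a Hölder bound is available --- give $C^{2+\alpha,1+\alpha/2}_{\mathrm{loc}}$ bounds on $w_n$, so along a subsequence $w_n\to w_\infty$ in $C^{2,1}_{\mathrm{loc}}(\mathbb{R}^2)$, and $w_\infty$ inherits the differential inequality on all of $\mathbb{R}^2$. In the first case $w_\infty$ is a bounded entire subsolution of $\partial_s z-D\partial_{yy}z=z(m-z)$ with $w_\infty(0,0)$ equal to the $\limsup$ in question; after one further translation so that $\sup w_\infty$ is attained at the origin, evaluating the inequality at that interior maximum (where $\partial_s w_\infty=0$ and $\partial_{yy}w_\infty\le0$) forces $\sup w_\infty\le m$. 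In the second case $w_\infty$ is an entire supersolution bounded below by a positive constant, and evaluating at an interior minimum gives $\inf w_\infty\ge m$; since $w_\infty(0,0)$ is the $\liminf$ in question, the conclusion follows. (A routine approximation covers the case where the extremum is attained only in a limit.)

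Granting the claim, part (a) follows by a monotone iteration. Write $\mu_0=1$ and let $\nu_0>0$ be the lower bound in the hypothesis on $v$ (taken, without loss, with $\nu_0\le k_2$). Suppose inductively that $\liminf v\ge\nu_n>0$ on every interior subcone. Then on interior subcones, for $t$ large, $\partial_t u-\partial_{xx}u\le u(1-a\nu_n-u)$ up to an arbitrarily small error in the coefficient, so the first part of the claim gives $\limsup u\le\mu_{n+1}:=1-a\nu_n$; consequently $\partial_t v-d\partial_{xx}v\ge rv(1-b\mu_{n+1}-v)$ there, and since $v$ stays above $\nu_0>0$ the second part gives $\liminf v\ge\nu_{n+1}:=1-b\mu_{n+1}$. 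The affine recursion $\mu_{n+1}=(1-a)+ab\mu_n$ (slope $ab\in(0,1)$) converges monotonically to its fixed point $k_1$, and correspondingly $\nu_n\to k_2$, so $\limsup u\le k_1$ and $\liminf v\ge k_2$ on every interior subcone. Part (c) is the mirror iteration started from $\liminf u>0$ (bound $v$ above by $1-b\mu_n$, then $u$ below). For part (b): from $\sup u\to0$ we get, for each $\epsilon>0$, $\partial_t v-d\partial_{xx}v\ge rv(1-\epsilon-v)$ on interior subcones for $t$ large; since $\liminf v>0$, the second part of the claim yields $\liminf v\ge1-\epsilon$, and letting $\epsilon\to0$ and using $v\le1$ gives $v\to1$ uniformly on interior subcones. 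Part (d) is symmetric in $u$ and $v$.

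I expect the scalar claim to be the only real obstacle: one must verify that the rescaled domains exhaust $\mathbb{R}^2$ (which is exactly why the statement is phrased via interior subcones of linearly growing width and why the blow-up is centered at points staying at linear distance from the lateral boundaries), set up the parabolic compactness, and ensure that the limiting entire sub/supersolution attains its extremum so the Liouville step applies. Once the claim is available, the bookkeeping of the sequences $\mu_n,\nu_n$ and of the successive shrinkings of $\delta$ is routine.
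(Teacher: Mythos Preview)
Your argument is correct, but it proceeds quite differently from the paper's proof. The paper does a single blow-up on the coupled system: assuming (a) fails along a sequence $(t_n,x_n)$, it translates to obtain an entire solution $(\hat u,\hat v)$ of the full two-species system with $\hat v\ge\zeta_0>0$ everywhere, and then compares $(\hat u,\hat v)$ directly with the spatially homogeneous ODE trajectory $(\overline U,\underline V)(t)$ starting at $(1,\zeta_0)$, which flows to $(k_1,k_2)$ in the competitive order $\preceq$. One comparison, evaluated at $t=0$ after shifting the initial time to $-T$ and letting $T\to\infty$, gives $(\hat u,\hat v)(0,0)\preceq(k_1,k_2)$ and the contradiction.

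Your route instead decouples the system: you establish a scalar Liouville-type lemma for entire sub/supersolutions of the logistic equation, and then run a monotone iteration $\mu_{n+1}=1-a\nu_n$, $\nu_{n+1}=1-b\mu_{n+1}$ that mimics, at the PDE level, the convergence of the ODE to $(k_1,k_2)$. This is more elementary in that only scalar comparison is needed and the Liouville step is just evaluation at an extremum, but the price is the bookkeeping of the iteration and the repeated $\epsilon$-absorption in the coefficients. The paper's approach is shorter and exploits the monotone structure of the full system in one stroke; yours is more modular and would transplant unchanged to settings where the coupled ODE dynamics are less explicit. Both are valid, and your handling of the compactness (linear distance to the lateral boundaries ensuring the translates exhaust $\mathbb{R}^2$) and of the non-attained extremum via a second translation is fine.
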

\begin{proof}
The proof is based on classification of entire solutions of \eqref{eq:1-1}.
For $(x_1,x_2)$ and $(y_1,y_2)$ in $\mathbb{R}^2$, we define the partial order $``\preceq"$ so that
$$(x_1,x_2) \preceq (y_1,y_2) \quad \text{ if and only if }\quad
x_1 \leq x_2 \,\,\text{ and }\,\, y_1 \geq y_2.
$$
 Suppose {\rm(a)} is false. Then there exists $(t_n,x_n)$ such that, as $n \to \infty$,  {$t_n\to\infty$ and}
\begin{equation*}
    \begin{array}{c}
        c_n:= \frac{x_n}{t_n} \to c \in (\underline c, \overline c) \,\text{ and }\, \lim\limits_{n\to\infty}u(t_n,x_n)>k_1 \text{ or }  \lim\limits_{n\to\infty}v(t_n,x_n)<k_2.
     \end{array}
\end{equation*}
Define $(u_n,v_n)(t,x):= (u,v)(t_n + t, x_n + x)$. It is standard to show that $0 \leq u_n \leq 1$ and $0 \leq v_n \leq 1$ in $[-t_n,\infty)\times\mathbb{R}$, so that by parabolic estimates $(u_n,v_n)$ is precompact in $C^2_{\mathrm{loc}}(K)$ for each compact subset $K \subset \mathbb{R}^2$.
Passing to a subsequence, we may assume that $(u_n,v_n)$ converges  to an entire solution
$(\hat u, \hat v)$ of \eqref{eq:1-1} in $C^2_{\mathrm{loc}}(\mathbb{R} \times \mathbb{R})$. By construction, there exists a constant $0<\zeta_0<1$ such that $ (\hat{u}, \hat{v})(t,x) \preceq (1, \zeta_0)$ for $(t,x) \in \mathbb{R}^2$.
Let $(\overline{U}, \underline{V})$ be the solution of the Lotka-Volterra system of ODEs
$$
U_t = U(1-U - aV), \quad V_t = rV(1-bU-V),
$$
with initial data $(1,\zeta_0)$, so that $(\overline{U}, \underline{V})(\infty) = (k_1,k_2)$. Now, for each $T>0$ we have $(\hat{u},\hat{v})(-T,x) \preceq (\overline{U},\underline{V})(0)$ for all $x$, it follows by comparison that
$$
 (\hat u,\hat v)(t,x)\preceq (\overline{U}, \underline{V})(t+T) \, \text{ for }(t,x) \in [-T,0] \times \mathbb{R}\, \text{ and }\,T>0,
$$
so that 
$$
 (\hat u,\hat v)(0,0)\preceq (\overline{U}, \underline{V})(T) \quad \text{ for }T>0.
$$
Letting $T \to \infty$, we obtain $(\hat u,\hat v)(0,0) \preceq (k_1,k_2)$.
In particular, we deduce that
$$
\lim_{n \to \infty} (u,v)(t_n,x_n) = \lim_{n \to \infty} (u_n,v_n)(0,0) = (\hat u, \hat v)(0,0)\preceq (k_1,k_2).
$$
This is a contradiction and  proves {\rm(a)}. The other assertions follow from similar considerations.
\end{proof}

The following lemma says
that the maximal spreading speed of $u$ (resp. $v$) can be estimated by the large deviation estimate of $u$ (resp. $v$) along a line
$\{(t,x): x = \hat{c} t\}$.

\begin{lemma}\label{lem:appen1} 
Let $\hat{c}>0$, $t_0>0$, and  $(\tilde u,\tilde v)$ be a solution of
\begin{equation}\label{eq:A2}
\left\{
\begin{array}{ll}
\medskip
\partial_t \tilde u-\partial_{xx}\tilde u=\tilde u(1-\tilde u-a\tilde v),& 0\leq x\leq\hat ct, t>t_0,\\
\medskip
\partial _t \tilde v-d\partial_{xx}\tilde v=r \tilde v(1-b\tilde u-\tilde v),&  0\leq x\leq\hat ct, t>t_0,\\
\tilde u(t_0,x)=\tilde u_0(x), \tilde v(t_0,x)=\tilde v_0(x), &  0\leq x\leq\hat ct_0.
\end{array}
\right .
\end{equation}
\begin{itemize}
\item[{\rm (a)}] If $\hat{c}>2$ and there exists $\hat \mu>0$ such that 
\begin{itemize}
\item[{\rm(i)}] $\lim\limits_{t\to\infty}(\tilde u,\tilde v)(t,0)=(k_1,k_2)$ and $\lim\limits_{t\to\infty}(\tilde u,\tilde v)(t,\hat ct)=(0,1)$,
\item[{\rm(ii)}]  $\lim_{t\to\infty} e^{\mu t}\tilde u(t,\hat ct)=0~$ for each $\mu \in [0,\hat \mu),$

\end{itemize}
then 
$$
\lim_{t\to\infty} \sup_{ct<x\leq \hat{c}t} \tilde u(t,x) = 0   \quad \text{ for each }c >  c_{\hat{c},\hat\mu},
$$
where

\begin{equation*}
 c_{\hat{c},\hat\mu}=\left\{
\begin{array}{ll}
\medskip
c_{\textup{LLW}},& \text{if } \hat\mu\geq   \lambda_{\textup{LLW}} (\hat{c} - c_{\textup{LLW}}),\\
\hat c-\frac{2\hat\mu}{\hat c-\sqrt{\hat c^2-4(\hat\mu+1-a)}},& \text{if }0< \hat\mu< \lambda_{\textup{LLW}}(\hat{c} - c_{\textup{LLW}}); \\
\end{array}
\right .
\end{equation*}
\item[{\rm (b)}] If $\hat{c} > 2\sqrt{dr}$ and there exists $\hat\mu >0$ such that
\begin{itemize}

\item[{\rm(i)}] $\lim\limits_{t\to\infty}(\tilde u,\tilde v)(t,0)=(k_1,k_2)$, and $\lim\limits_{t\to\infty}(\tilde u,\tilde v)(t,\hat ct)=(1,0)$,

\item[{\rm(ii)}]  $\lim_{t\to\infty} e^{\mu t}\tilde v(t,\hat ct)=0~$ for each $\mu \in [0, \hat \mu),$

\end{itemize}
then 
$$
\lim_{t\to\infty} \sup_{ct<x\leq \hat{c}t} \tilde v(t,x) = 0   \quad \text{ for each }c >\tilde{c}_{\hat{c},\hat\mu},
$$
where
\begin{equation*}
\tilde{c}_{\hat{c},\hat\mu}= \left\{
\begin{array}{ll}
\medskip
\tilde{c}_{\textup{LLW}},& \text{ if } \hat\mu\geq \tilde{\lambda}_{\rm LLW} (\hat{c} - \tilde{c}_{\textup{LLW}}),\\
\hat c-\frac{2d\hat\mu}{\hat c-\sqrt{\hat c^2-4d[\hat\mu+r(1-b)]}},&  \text{ if } 0<\hat\mu<\tilde{\lambda}_{\rm LLW} (\hat{c} - \tilde{c}_{\textup{LLW}}).
\end{array}
\right.
\end{equation*}
\end{itemize}
Here  ${c}_{\textup{LLW}}, \tilde{c}_{\textup{LLW}}$ are given in Theorem \ref{thm:LLW} and Remark \ref{rmk:LLW}, and
\begin{equation}\label{eq:LLL}
\lambda_{\rm LLW}=\frac{{c}_{\textup{LLW}}-\sqrt{{c}_{\textup{LLW}}^2-4(1-a)}}{2}, \quad  \tilde{\lambda}_{\rm LLW}=\frac{\tilde{c}_{\textup{LLW}}-\sqrt{\tilde{c}_{\textup{LLW}}^2-4dr(1-b)}}{2d}.\end{equation}
\end{lemma}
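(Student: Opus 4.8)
The plan is to construct explicit supersolutions for the scalar problem governing $\tilde u$ (resp. $\tilde v$) on the wedge $\{0\le x\le \hat c t\}$, by combining two ingredients: (i) a traveling-wave supersolution connecting $(k_1,k_2)$ and $(0,1)$ (resp. $(1,0)$) to control $\tilde u$ near the left boundary $x=0$, and (ii) an exponentially decaying "linear" supersolution of the form $e^{-\lambda(x-ct)}$ emanating from the right boundary $x=\hat c t$, where the hypothesis (ii) on the large-deviation decay of $\tilde u(t,\hat c t)$ provides the boundary data. Since $a,b\in(0,1)$, once we know $\tilde v$ is close to $1$ (resp. $\tilde u$ close to $1$) in the relevant region — which follows from hypothesis (i) together with Lemma \ref{lem:entire1} and parabolic estimates — the equation for $\tilde u$ is dominated by the linear operator $\partial_t u-\partial_{xx}u-(1-a)u$, so the comparison is against the linearization at $(0,1)$ with growth rate $1-a$, whose minimal-speed traveling wave has speed $c_{\textup{LLW}}$ and spatial decay $\lambda_{\textup{LLW}}$ (as in \eqref{eq:LLL}). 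I will treat case (a) in detail; case (b) is identical after replacing $(\partial_{xx},1-a,c_{\textup{LLW}},\lambda_{\textup{LLW}})$ by $(d\partial_{xx},r(1-b),\tilde c_{\textup{LLW}},\tilde\lambda_{\textup{LLW}})$ and reading off $\tilde c_{\hat c,\hat\mu}$ from the corresponding dispersion relation $d\lambda^2-\hat c\lambda+\hat\mu+r(1-b)=0$.

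The key steps, in order. First, fix $\epsilon>0$ small and use hypothesis (i) plus Lemma \ref{lem:entire1}(a)–(b) to find $T=T(\epsilon)$ and a neighborhood of the left boundary in which $\tilde v\ge 1-\epsilon$, so that $\tilde u$ satisfies $\partial_t\tilde u-\partial_{xx}\tilde u\le \tilde u(1-a(1-\epsilon)-\tilde u)$ there; by the classical spreading result (Theorem \ref{thm:LLW}) the minimal wave speed of this perturbed equation is within $O(\epsilon)$ of $c_{\textup{LLW}}$. Second, build the supersolution
\[
\bar u(t,x)=\min\Big\{1,\ \phi_\epsilon(x-(c_{\textup{LLW}}+\epsilon)(t-T))\ +\ A\,e^{-\lambda(x-c(t-T))}\Big\},
\]
where $\phi_\epsilon$ is a traveling-wave (super)solution connecting $\approx k_1$ down to $0$ at speed $c_{\textup{LLW}}+\epsilon$, the exponent $\lambda$ solves the dispersion relation $\lambda^2-c\lambda+(1-a)=0$ associated with speed $c$ (this is where the formula $c_{\hat c,\hat\mu}=\hat c-\frac{2\hat\mu}{\hat c-\sqrt{\hat c^2-4(\hat\mu+1-a)}}$ comes from: it is precisely the speed $c$ for which the decay rate $\lambda$ of the line supersolution starting from $x=\hat c t$ matches the prescribed boundary decay $\hat\mu$, i.e. $\lambda\hat c-\lambda c=\hat\mu$ along $x=\hat c t$), and $A$ is chosen via hypothesis (ii) so that $\bar u\ge \tilde u$ on the right boundary $\{x=\hat c t\}$ for $t\ge T$. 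Third, check $\bar u$ is a supersolution in the interior (the $\phi_\epsilon$ part handles the region where $\tilde v\approx 1$; the exponential part is a supersolution of the linearized equation wherever $\bar u<1$, using $a\in(0,1)$ so the nonlinearity only helps), check the left-boundary ordering $\bar u(t,0)\ge \tilde u(t,0)\to k_1$ using $\phi_\epsilon(-\infty)\ge k_1$, and apply the parabolic comparison principle on the wedge. Fourth, read off from the shape of $\bar u$ that $\limsup_{t\to\infty}\sup_{ct<x\le\hat c t}\tilde u\le \limsup\sup(\phi_\epsilon+Ae^{-\lambda(x-ct)})=0$ for every $c>c_{\hat c,\hat\mu}+O(\epsilon)$; since the wave-speed branch $\phi_\epsilon$ contributes the constraint $c\ge c_{\textup{LLW}}$ and the exponential branch contributes the constraint coming from matching $\hat\mu$, the effective threshold is $\max$ of the two, which is exactly the stated $c_{\hat c,\hat\mu}$ — the two cases in its definition correspond to whether the line-supersolution decay is steep enough ($\hat\mu\ge\lambda_{\textup{LLW}}(\hat c-c_{\textup{LLW}})$) that the traveling wave at speed $c_{\textup{LLW}}$ already dominates, or not. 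Letting $\epsilon\to0$ finishes the proof.

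The main obstacle is the construction in Step 2 of a genuine supersolution that simultaneously respects \emph{both} boundary conditions on the (time-dependent) wedge: the traveling-wave piece $\phi_\epsilon$ is anchored to the left boundary $x=0$ but is not small on the right boundary $x=\hat c t$ (it tends to $0$ there only because $\hat c>c_{\textup{LLW}}$, but one needs quantitative decay), while the exponential piece is anchored to the right boundary and blows up to the left. One must verify that their sum is still a supersolution where it is below $1$ — the cross term in the reaction is the delicate point, and is precisely why one truncates at $1$ and why one needs $a<1$ so that $(1-a)$, rather than $1$, is the relevant linear growth rate; the truncation $\min\{1,\cdot\}$ is a supersolution since the constant $1$ is a supersolution of $u(1-a(1-\epsilon)-u)$. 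A secondary technical point, deferred (as the authors indicate) to the Appendix, is making the "within a neighborhood of the left boundary $\tilde v\approx 1$" statement precise and uniform in $t$; this is where Lemma \ref{lem:entire1} and the convergence hypotheses (i) do the real work, converting the asymptotic boundary behavior into a statement valid on an expanding spatial strip.
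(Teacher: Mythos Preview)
Your scalar-reduction approach has a genuine gap at the left boundary. You write ``find $T=T(\epsilon)$ and a neighborhood of the left boundary in which $\tilde v\ge 1-\epsilon$,'' but hypothesis~(i) gives $\tilde v(t,0)\to k_2=\frac{1-b}{1-ab}<1$, not $\tilde v\to 1$. Consequently the differential inequality $\partial_t\tilde u-\partial_{xx}\tilde u\le \tilde u(1-a(1-\epsilon)-\tilde u)$ is \emph{false} near $x=0$: there the effective linear growth rate of $\tilde u$ is $1-a\tilde v\approx 1-ak_2=k_1>1-a$. A direct check shows your proposed sum $\phi_\epsilon+Ae^{-\lambda(x-ct)}$ fails to be a supersolution of $\tilde u_t-\tilde u_{xx}=\tilde u(1-\tilde u-a\tilde v)$ precisely in the region where $\phi_\epsilon$ is of order $k_1$, $\psi$ is small, and $\tilde v\approx k_2$: the discrepancy $a(\phi_\epsilon+\psi)(\tilde v-1)$ is negative of size $\sim a\phi_\epsilon(1-k_2)=abk_1\phi_\epsilon$, which is not absorbed by the $O(\epsilon)$ cushion. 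Relatedly, if $\phi_\epsilon$ is a scalar KPP wave with growth rate $1-a$, then $\phi_\epsilon(-\infty)=1-a<k_1$ and the minimal speed is $2\sqrt{1-a}$, not $c_{\textup{LLW}}$; when linear determinacy fails ($c_{\textup{LLW}}>2\sqrt{1-a}$) your construction cannot recover the stated threshold $c_{\hat c,\hat\mu}=c_{\textup{LLW}}$.

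The paper's proof avoids both problems by never reducing to a scalar equation. After Step~1 (your same observation, via Lemma~\ref{lem:entire1}(a), that $\tilde u\le k_1+o(1)$ and $\tilde v\ge k_2-o(1)$ on the whole wedge), it introduces a $\delta$-perturbed \emph{system} \eqref{eq:4-1} whose coexistence state $(k_1^\delta,k_2^\delta)$ satisfies $k_1^\delta>k_1$, $k_2^\delta<k_2$, and whose minimal wave speed $c_{\textup{LLW}}^\delta\to c_{\textup{LLW}}$. It then takes a \emph{single} traveling-wave pair $(\varphi^\delta,\psi^\delta)$ of this perturbed system at speed $c_{\hat c,\hat\mu}^\delta$ slightly above $c_{\hat c,\hat\mu}$ (such waves exist since $c_{\hat c,\hat\mu}>c_{\textup{LLW}}$ in the nontrivial case), and applies the cooperative-system comparison $(\tilde u,\tilde v)\preceq(\varphi^\delta,\psi^\delta)(x-c_{\hat c,\hat\mu}^\delta t-x_1)$ on the wedge. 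The left boundary is handled by $(\tilde u,\tilde v)(t,0)\to(k_1,k_2)\preceq(k_1^\delta,k_2^\delta)=(\varphi^\delta,\psi^\delta)(-\infty)$; the right boundary by the asymptotics $\varphi^\delta(s)\sim Ae^{-\lambda_{\hat c,\hat\mu}^\delta s}$ together with hypothesis~(ii) and the algebraic fact $\hat\mu>\lambda_{\hat c,\hat\mu}^\delta(\hat c-c_{\hat c,\hat\mu}^\delta)$. No second ``exponential piece'' is needed: the formula for $c_{\hat c,\hat\mu}$ is exactly the speed $c$ at which the wave's own decay rate $\lambda_c=\tfrac{1}{2}(c-\sqrt{c^2-4(1-a)})$ satisfies $\lambda_c(\hat c-c)=\hat\mu$, and the first case $c_{\hat c,\hat\mu}=c_{\textup{LLW}}$ is obtained by approximating with $\hat\mu'\nearrow\lambda_{\textup{LLW}}(\hat c-c_{\textup{LLW}})$.
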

The proof of Lemma \ref{lem:appen1} is based on comparison with {appropriate} traveling wave solutions connecting $(k_1,k_2)$ with one of the semi-trivial equilibrium points. We postpone the proof to Appendix \ref{sec:B}.

\begin{proof}[Proof of Proposition \ref{prop:1}]
 It follows directly from definition that $\underline{c}_i \leq \overline{c}_i$ for $i=1,2,3$. We will complete the proof in the following order: \rm{(1)} $\overline{c}_2 \leq 2$,  \rm{(2)} $\overline{c}_1 \leq 2\sqrt{dr}$,  \rm{(3)} $\overline c_3\leq -\tilde{c}_{\rm{LLW}}$,  \rm{(4)}  $\underline c_2 \geq c_{\rm LLW}$,  \rm{(5)} $\underline c_1\geq 2\sqrt{dr}$,  \rm{(6)} $\underline{c}_3 \geq -\tilde{c}_{\rm{LLW}}$. After that, we establish  \eqref{eq:spreadinglyprop_a}-\eqref{eq:spreadinglyprop_d} by applying Lemma \ref{lem:entire1}. Our proof adapts the ideas of \cite{Ducrot_preprint} and \cite[Proposition 3.1]{Girardin_2018}, and can be skipped by the motivated reader.

\noindent {\bf Step 1.} We show assertions (1) and (2).

Fix $\lambda >0$, let $A \gg 1$ be chosen such that $\overline{u}_\lambda(t,x):= \exp(-\lambda x+(\lambda^2 + 1)t+A)$ satisfies
\begin{equation*}\label{eq:uukpp}
\left\{
\begin{array}{ll}
\partial_t \overline{u}_\lambda - \partial_{xx} \overline{u}_\lambda \geq \overline{u}_\lambda(1-\overline{u}_\lambda), &\text{ in }(0,\infty) \times \mathbb{R},\\
\overline{u}_\lambda(x,0) \geq u_0(x), &\text{ for } x \in \mathbb{R}.
\end{array}
\right.
\end{equation*}
By comparison principle, we have
\begin{equation}\label{eq:esta}
0 \leq u(t,x) \leq \exp(-\lambda x+(\lambda^2 + 1)t+A).
\end{equation} Setting $\lambda=1$, we have
\begin{equation}\label{eq:ukpp}
\lim_{t \to \infty} \sup_{x > (2+\eta)t} |u(t,x)| = \lim_{t \to \infty} \sup_{x > (2+\eta)t} \exp(-x + 2t + A)= 0 \, \text{ for each }\eta >0.
\end{equation}
Thus $\overline c_2\leq 2$, i.e., assertion  \rm{(1)}  holds. Similarly, we have for each $\eta >0$,
\begin{equation}\label{eq:vkpp}
\lim_{t \to \infty} \sup_{|x| > (2\sqrt{dr}+\eta)t} |v(t,x)| = 0,
\end{equation}
i.e.  $\overline c_1\leq 2\sqrt{dr}$ and assertion \rm{(2)} holds. In addition, we deduce (\ref{eq:spreadinglyprop_a}) as $dr>1$.

\noindent {\bf Step 2.} We show assertion (3), i.e.,  $\overline{c}_3 \leq - \tilde{c}_{\rm{LLW}}$.

By ${\rm (H_\infty)}$, ${v}_0$ is non-trivial, compactly supported and 
$$
(u_0(x),v_0(x)) \preceq (1, {v}_0(x)) \quad \text{ in }\mathbb{R}.
$$
Let $(\tilde u_{\rm LLW}, \tilde v_{\rm LLW})$ be the solution to \eqref{eq:1-1} with initial condition $(1, {v}_{0}(x))$. Then
 Remark \ref{rmk:LLW} guarantees the existence of
$\tilde{c}_{\rm{LLW}} \in [2\sqrt{dr(1-b)}, 2\sqrt{dr}]$, such that 
$$
\liminf_{t \to \infty} \inf_{|x| < |c|t } \tilde v_{\rm LLW}(t,x)  >0, \quad \text{ for each } c \in (-\tilde{c}_{\rm{LLW}}, 0).
$$
By the comparison principle for \eqref{eq:1-1},
$(u,v) \preceq (\tilde u_{\rm LLW},\tilde v_{\rm LLW})$ for all $(t,x) \in (0,\infty) \times \mathbb{R}$, which yields, for each $c \in (- \tilde{c}_{\rm{LLW}}, 0)$,
$$
\liminf_{t \to \infty} \inf_{ct < x < ct + 1} v(t,x) \geq  \liminf_{t \to \infty} \inf_{ct < x < ct + 1} \tilde v_{\rm LLW}(t,x)  >0
$$
This proves $\overline{c}_3 \leq - \tilde{c}_{\rm{LLW}}\leq -2\sqrt{dr(1-b)}$ and thus assertion \rm{(3)} holds.

\noindent {\bf Step 3.} We show assertion (4), i.e., $\underline{c}_2 \geq c_{\rm LLW}$.

As in Step 2, this can be proved 
by comparing $(u,v)$ with the solution $(u_{\rm LLW}, v_{\rm LLW})$ of \eqref{eq:1-1} with initial condition $(\tilde{u}_0, 1)$, for some compactly supported $\tilde{u}_0$ satisfying $0 \leq \tilde{u}_0 \leq u_0$, and then using  Theorem \ref{thm:LLW}. 

\noindent {\bf Step 4.} We show assertion (5), i.e., $\underline{c}_1 \geq 2\sqrt{dr}$.

 Fix $c \in (2, 2\sqrt{dr})$, and choose $\eta_1>0$ small enough so that
\begin{equation*}
[c-\eta_1, c+\eta_1]\subset (2, 2\sqrt{dr}), \quad \frac{c^2}{4d} - r(1-2\eta_1) + \frac{d \pi^2 \eta^2_1}{4}<0, 
\end{equation*}
and then choose, by \eqref{eq:ukpp},  $T_1 >1/\eta_1^2$ large enough so that
\begin{equation*}
au(t,x) \leq \eta_1 \,\, \text{ in } \Omega_1,
\end{equation*}
where $\Omega_1 = \{(t,x): (c-\eta_1)t \leq  x \leq  (c+ \eta_1)t,\, t \geq T_1\}.$
Now, let $\eta_2 \in (0,\eta_1]$, and define
\begin{equation*}
\underline{v}^{c}(t,x):=\begin{cases}
\eta_2 e^{-\frac{c}{2d}(x-c t + 1/\eta_1)}\cos\left(\frac{\eta_1\pi(x-ct)}{2}\right), & \text{ if } |x-ct| < 1/\eta_1,\\
0 &\text{ if } |x-ct| \geq 1/\eta_1,
\end{cases}
\end{equation*}
where $\eta_2$ is chosen small enough to ensure that $\underline{v}^{c}(t,x)\leq v(t,x)$ on the parabolic boundary of $\Omega_1$.

It can be verified that $v(t,x)$ and $\underline{v}^{c}(t,x)$
are respectively super- and sub-solutions of the equation
$$
\partial_t \tilde v - d \partial_{xx} \tilde v = r\tilde v( 1- \eta_1 - \tilde v) \quad \text{ in } \Omega_1.
$$
 By the comparison principle, we deduce that
$$
\liminf_{ t\to\infty} v(t, ct) \geq \liminf_{ t\to\infty} \underline{v}^{c}(t, ct)>0.
$$
Hence, $\underline{c}_1 \geq c$. Letting $c \nearrow 2\sqrt{dr}$, we have $\underline{c}_1 \geq 2\sqrt{dr}$.

\noindent {\bf Step 5.} We claim that
\begin{equation}\label{positivev}
    \displaystyle \liminf_{t \to \infty} \inf_{(\overline{c}_3 + \eta)t < x < (\underline{c}_1 - \eta)t} v(t,x) >0 \text{ for  small } \eta>0.
\end{equation}

Given any small $\eta>0$, definitions of $\overline{c}_3$ and $\underline{c}_1$ imply the existence of $c_3' \in (\overline{c}_3, \overline{c}_3 + \eta)$, $c_1' \in (\underline{c}_1 - \eta, \underline{c}_1)$ and $T>0$ such that
$$
\inf_{t \geq T}\min\{v(t, {c}_3' t), v(t, {c}'_1 t)\} >0.
$$
Now, define
\begin{equation*}
    \begin{array}{l}
     \delta:= \min\left\{\frac{1-b}{2},\, \inf\limits_{c_3' T < x < c'_1 T}v(T,x),\,\inf\limits_{t \geq T}\min\{v(t, {c}_3' t), v(t, {c}'_1 t)\} \right\} >0.
     \end{array}
\end{equation*}
Observe that $v(t,x)$ is a super-solution to the KPP-type equation $\partial_t v = d\partial_{xx}v + rv(1-b - v)$ satisfying $v(t,x) \geq \delta$ on the parabolic boundary of the domain $\Omega:=\{(t,x): t \geq T,\,c_3' t < x < c'_1 t \}$. Since $v - \delta$ cannot attain interior negative minimum, it follows that
$v \geq \delta$ in $\Omega$. In particular, \eqref{positivev} holds.

\noindent {\bf Step 6.} We show that
\begin{equation}\label{positiveu}
    \displaystyle \liminf_{t \to \infty} \inf_{x <(\underline{c}_2-\eta)t} u(t,x) >0\text{ for small }\eta>0.
\end{equation}
Fix a small $\eta>0$.
By definition of $\underline{c}_2>0$, there exists $c'_2 \in (\underline{c}_2 - \eta, \underline{c}_2)$ and $T_2>0$ such that
\begin{equation}\label{eq:c2'}
\inf_{t \geq T_2} u(t,c'_2 t) >0.
\end{equation}
 Observe also that $v \leq 1$ and thus  $u$ is a super-solution to
$$
\left\{
\begin{array}{ll}
\partial_t u = \partial_{xx}u + u(1-a - u), &\text{ for }(x,t) \in \mathbb{R} \times (0,\infty),\\
u(x,0) = \chi_{(-\infty,0]} \theta_0, &\text{ for }x \in \mathbb{R},
\end{array}
\right.
$$
where $\theta_0>0$ is given by ${\rm (H_\infty)}$. It follows from the classical results in \cite{Fisher_1937,Kolmogorov_1937} that, for some $T_2>0$,
\begin{equation}\label{eq:ukpp1}
\inf_{t \geq T_2} \inf_{x < (2\sqrt{1-a}-\eta)t} u(t,x) \geq  \frac{1-a}{2}>0 \, \text{ for all }\eta >0.
\end{equation}
Since $\{(T_2,x):\, (2\sqrt{1-a}-\eta)T_2 \leq x \leq c'_2 T_2\}$ is a compact set, \eqref{eq:ukpp1} implies
\begin{equation}\label{eq:ukppp1}
\inf_{x \leq c'_2 T_2} u(T_2,x) >0.
\end{equation}
By \eqref{eq:c2'} and \eqref{eq:ukppp1}, we deduce that $\delta := \min\{\inf\limits_{t \geq T_2}  u(t,c'_2 t),\, \frac{1-a}{2},\, \inf_{x \leq c'_2 T_2} u(T_2,x) \}$ is positive, then $u$ is a super-solution to the KPP-type equation
$\partial_t  u= \partial_{xx} u + u(1-a-u)$ in the domain $\Omega':=\{(t,x): t \geq T_2,\,  x \leq c'_2 t\}$
such that $u(t,x) \geq \delta$ on the parabolic boundary. Therefore, we deduce  $u(t,x) \geq \delta$ in $\Omega'$ and \eqref{positiveu} follows.

\noindent{\bf Step 7. }  We show (\ref{eq:spreadinglyprop_b}) and (\ref{eq:spreadinglyprop_c}).

Fix small $\eta>0$.  Since \eqref{positivev} holds, and $\lim\limits_{t\to \infty } \sup\limits_{x>(\overline{c}_2+\eta)t} u=0$ (by definition of $\overline{c}_2$),
we may apply  Lemma \ref{lem:entire1}(b) to deduce
(\ref{eq:spreadinglyprop_b}).

Next, in view of \eqref{positivev} and \eqref{positiveu} and the fact that $\overline{c}_3\leq -\tilde{c}_{\rm LLW}$, one can deduce  (\ref{eq:spreadinglyprop_c}) from
 items \rm{(a)} and \rm{(c)} of Lemma \ref{lem:entire1}.

It remains to show $\underline{c}_3\geq -\tilde{c}_{\rm LLW}$. 

\noindent{\bf Step 8. } We claim
\begin{equation}\label{step8}
    \displaystyle \lim_{t \to \infty} \sup_{x < (-2\sqrt{dr} - \eta)t} |u(t,x)  -1| = 0 \text{ for each }\eta>0.
\end{equation}

Observe from \eqref{eq:vkpp} and \eqref{eq:ukpp1} that for each $\eta>0$,
$$\displaystyle \lim_{t\to \infty}\sup_{x<(-2\sqrt{dr}-\eta)t} |v(t,x)|=0, \quad \text{ and} \quad \liminf_{t \to \infty} \inf_{x < (-2\sqrt{dr}-\eta)t} u(t,x) \geq  \frac{1-a}{2}.$$
Thus \eqref{step8} follows by applying Lemma  \ref{lem:entire1}\rm{(d)}.

\noindent{\bf Step 9. }  We claim that, for each $\lambda>0$, there exists $K>0$ such that
\begin{equation}\label{eq:step9}
v(t,x) \leq \min\left\{1, \exp(\lambda (x + K) + (d\lambda^2 + r)t)\right\}.\end{equation}

To this end, choose $K>0$ such that $v_0(x) \leq \chi_{[-K,\infty)}$, then the right hand side of \eqref{eq:step9} defines a weak super-solution
 of the KPP-type equation $\partial_t v = d\partial_{xx} v + rv(1-v)$.

\noindent{\bf Step 10. } We finally  show $\underline{c}_3\geq -\tilde{c}_{\rm{LLW}}$ and establish (\ref{eq:spreadinglyprop_d}).

We first apply  Lemma \ref{lem:appen1} to show $\underline{c}_3\geq -\tilde{c}_{\rm{LLW}}$. Let $\tilde{u}(t,x)=u(t,-x)$ and $\tilde{v}(t,x)=v(t,-x)$ and let $\hat{c} >2\sqrt{dr}$ be a constant to be specified later. Recalling (\ref{eq:spreadinglyprop_c}) proved in Step 7 and \eqref{step8}, we arrive at
$$
\lim\limits_{t\to \infty}(\tilde{u},\tilde{v})(t,0)=(k_1,k_2) \text{ and }\lim\limits_{t\to \infty}(\tilde{u},\tilde{v})(t,\hat{c}t)=(1,0).
$$
This verifies hypotheses (i) and (ii) of Lemma \ref{lem:appen1}\rm{(b)}. Next, by \eqref{eq:step9}, we have for arbitrary $\lambda>0$,
$$\tilde{v}(t,\hat{c}t)= v(t,-\hat{c}t)\leq \exp\{-\mu^\lambda t+\lambda K\} =  \exp\{-(\mu^\lambda + o(1)) t\},$$
where $\mu^\lambda=\hat{c}\lambda-d\lambda^2-r$. To {apply} Lemma \ref{lem:appen1}\rm{(b)}, we need to choose $\lambda$ and $\hat{c}$ such that
\begin{equation}\label{eq:mumumu}
\mu^{\lambda} \geq \tilde{\lambda}_{\rm LLW} (\hat{c} - \tilde{c}_{\rm{LLW}})  =-d\tilde{\lambda}^2_{\rm LLW}+\tilde{\lambda}_{\rm LLW}\hat c-r(1-b),
\end{equation}
where the {equality} follows from definition of $\tilde{\lambda}_{\rm LLW}$ in \eqref{eq:LLL}.
Observing
\begin{equation*}
   \mu^{\lambda}-\left[-d\tilde{\lambda}^2_{\rm LLW}+\tilde{\lambda}_{\rm LLW}\hat c-r(1-b)\right]= (\lambda-\tilde{\lambda}_{\rm LLW})\left[\hat c- d(\lambda+\tilde{\lambda}_{\rm LLW})\right]-rb,
\end{equation*}
we may fix $\lambda > \tilde{\lambda}_{\rm LLW}$ and choose $\hat{c}$ large enough so that \eqref{eq:mumumu} is verified. Now,  applying Lemma \ref{lem:appen1}\rm{(b)} to $(\tilde{u},\tilde{v})$, we conclude that for any $c>\tilde{c}_{\hat{c},\mu}=\tilde{c}_{\rm{LLW}}$,
\begin{equation*}
\begin{array}{l}
\lim\limits_{t\rightarrow \infty}\sup\limits_{ x> ct}\tilde{v}(t,x)=\lim\limits_{t\rightarrow \infty}\sup\limits_{ x< -ct}v(t,x)=0.
\end{array}
\end{equation*}
 This implies $\underline{c}_3\geq -\tilde{c}_{\rm{LLW}}$.

 Furthermore,  in view of \eqref{positivev},
we can deduce (\ref{eq:spreadinglyprop_d}) by Lemma \ref{lem:entire1}\rm{(d)}.
The proof of Proposition \ref{prop:1} is now complete.
\end{proof}

\begin{remark}\label{rem_2.1}
By Steps 2 and 3 in the  proof of Proposition \ref{prop:1}, observe that the assertions {\rm{(3)}} $\overline c_3\leq -\tilde{c}_{\textup{LLW}}$ and { \rm{(4)}}  $\underline c_2 \geq c_{\rm LLW}$ remain true for more general initial data $(u_0,v_0)$, e.g., when
$$
\liminf_{x \to -\infty} u_0(x) >0,\quad \lim_{x \to \infty} u_0(x) = 0, \quad \text{ and } \quad \lim_{|x| \to \infty} v_0(x) = 0.
$$
\end{remark}

\section{Estimating $\underline{c}_2$ and $\overline {c}_2$ via geometric optics ideas}\label{S3}
Throughout this section, we assume that there exists $c_1 > \tilde{c}_1 \geq 2$ such that
\begin{equation}\label{eq:v_ep}
\chi_{\{\tilde{c}_1 t < x < c_1 t\}}\leq\hspace{-.3cm}\liminf_{\scriptsize \begin{array}{c}\epsilon \to 0\\ (t',x') \to (t,x)\end{array}} \hspace{-.3cm} v^\epsilon(t',x') \leq\hspace{-.3cm}\limsup_{\scriptsize \begin{array}{c}\epsilon \to 0\\ (t',x') \to (t,x)\end{array}} \hspace{-.3cm}  v^\epsilon(t',x')\leq \chi_{\{x \leq c_1 t\}},
\end{equation}
{for all $(t,x)\in(0,\infty)\times\mathbb{R}$.}
\begin{remark}
Under the assumptions $dr>1$ and ${\rm (H_\infty)}$, the condition \eqref{eq:v_ep} holds for
 $\tilde{c}_1 = 2$ and $c_1 = 2\sqrt{dr}$, by invoking Proposition \ref{prop:1}.
 \end{remark}

To prove Theorem \ref{thm:1-1}, it remains to deduce $\underline c_2=\overline c_2$ and determine its value. In view of Lemma \ref{lem:appen1}, the key is  to choose $\hat{c}>2$ and determine $\hat \mu>0$ such that
\begin{equation}\label{eq:exppp}
u(t, \hat{c} t)= \exp\left(  - (\hat \mu + o(1))t\right).
\end{equation}
This was accomplished in \cite{Girardin_2018} for the case $a < 1 < b$ by a delicate construction of global super- and sub-solutions, in the sense that they are defined and respect the differential inequalities for $(t,x) \in (0,\infty)\times \mathbb{R}$.

In this section, we shall derive the exponential estimate \eqref{eq:exppp} by the ideas of large deviations. Using this method, one can obtain {an} exponential estimate of $u$ without constructing global super- and sub-solutions for system \eqref{eq:1-1}.

To this end, we introduce a small parameter $\epsilon$ via the following transformation:
\begin{equation*}
u^\epsilon(t,x)=u\left(\frac{t}{\epsilon},\frac{x}{\epsilon}\right) \quad \mathrm{and} \quad v^\epsilon(t,x)=v\left(\frac{t}{\epsilon},\frac{x}{\epsilon}\right).
\end{equation*}
Under the new scaling, we rewrite  the equation of $u$ in \eqref{eq:1-1} as
\begin{align}\label{eq:1-1'}
\left \{
\begin{array}{ll}
\medskip
\partial_t u^\epsilon=\epsilon \partial_{xx} u^\epsilon+\frac{u^{\epsilon}}{\epsilon}(1-u^\epsilon-av^\epsilon), & \mathrm{in}~ (0,\infty)\times\mathbb{R},\\
u^\epsilon(0,x)=u_0(\frac{x}{\epsilon}), & \mathrm{on}~ \mathbb{R}.\\
\end{array}
\right.
\end{align}

To obtain the asymptotic behavior of $u^\epsilon$ as $\epsilon\rightarrow 0$, the idea is to consider the WKB-transformation $w^\epsilon$, which is given by
\begin{equation}\label{eq:w}
w^\epsilon(t,x)=-\epsilon\log{u^\epsilon(t,x)},
\end{equation}
and satisfies the equation:
\begin{align}\label{eq:3-2}
\left \{
\begin{array}{ll}
\medskip
\partial_tw^\epsilon-\epsilon\partial_{xx} w^\epsilon+|\partial_x w^\epsilon|^2+1-u^\epsilon-av^\epsilon=0, & \mathrm{in}~ (0,\infty)\times\mathbb{R},\\
w^\epsilon(0,x)=-\epsilon\log{u^\epsilon(0,x)},  & \mathrm{on} ~\mathbb{R}.\\
\end{array}
\right.
\end{align}

\begin{lemma}\label{lem:3-1}
Let $w^\epsilon$ be a solution of \eqref{eq:3-2}. Then  for each  compact subset $Q$ of $[(0,\infty)\times \mathbb{R}]\cup [\{0\} \times (-\infty,0)]$, there is a constant $C(Q)$  independent of $\epsilon $ such that
$$0 \leq {w}^{\epsilon}(t,x)
\leq C(Q) \quad \text{ for }\, (t,x) \in Q  \text{ and }\,\, \epsilon \in (0, 1/C(Q)].
$$
Furthermore,
\end{lemma}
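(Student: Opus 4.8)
The plan is to prove the two–sided bound by elementary comparison arguments, keeping track of the exponential scaling, and to obtain the (presumably) subsequent equicontinuity statement from standard interior estimates for the viscous Hamilton--Jacobi equation \eqref{eq:3-2}.

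First, the lower bound and the set-up. One checks that $(u,v)$ satisfies $0\le u,v\le1$: the constants $0$ and $1$ are, respectively, sub- and super-solutions of each scalar equation in \eqref{eq:1-1} as soon as the other component is nonnegative (for instance $1\cdot(1-1-av)=-av\le0$), so the comparison principle together with $(u_0,v_0)\in C(\mathbb{R};[0,1])^2$ gives the claim. By the strong maximum principle $u^\epsilon>0$ for $t>0$, hence $w^\epsilon=-\epsilon\log u^\epsilon\ge0$; and on the portion of $Q$ lying in $\{0\}\times(-\infty,0)$ we have $w^\epsilon(0,x)=-\epsilon\log u_0(x/\epsilon)\le-\epsilon\log\theta_0$ by $\mathrm{(H_\infty)}$.

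For the upper bound, use $u,v\le1$ to get $1-u-av\ge -a$, so $u$ is a supersolution of $\partial_t u-\partial_{xx}u+au=0$; equivalently $e^{at}u$ is a supersolution of the heat equation. Comparison with the heat semigroup and $u_0\ge\theta_0\chi_{(-\infty,0]}$ give
$$u(t,x)\ \ge\ e^{-at}\big(G_t* u_0\big)(x)\ \ge\ \theta_0\, e^{-at}\,\overline{\Phi}\Big(\tfrac{x}{\sqrt{2t}}\Big),$$
where $G_t$ is the heat kernel and $\overline{\Phi}$ the standard Gaussian tail. Using $\overline{\Phi}(s)\ge\frac12$ for $s\le0$ and $\overline{\Phi}(s)\ge c(1+s)^{-1}e^{-s^2/2}$ for $s\ge0$, this yields $-\log u(t,x)\le at-\log\theta_0+C+\frac{(x_+)^2}{4t}+\log\!\big(1+\frac{x_+}{\sqrt{2t}}\big)$. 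Replacing $(t,x)$ by $(t/\epsilon,x/\epsilon)$ and multiplying by $\epsilon$ gives
$$w^\epsilon(t,x)\ \le\ at+\epsilon(C-\log\theta_0)+\frac{(x_+)^2}{4t}+\epsilon\log\!\Big(1+\frac{x_+}{\sqrt{2t\epsilon}}\Big).$$
Now a compact $Q\subset[(0,\infty)\times\mathbb{R}]\cup[\{0\}\times(-\infty,0)]$ cannot accumulate at $\{0\}\times[0,\infty)$, so there are $t_1,t_2,R>0$ with $Q\subset[0,t_2]\times[-R,R]$ and $t\ge t_1$ on $Q\cap\{x\ge0\}$. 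For $(t,x)\in Q$ with $x<0$ the Gaussian argument is negative and the last two terms drop, so $w^\epsilon\le at_2+\epsilon\log(2/\theta_0)$; for $(t,x)\in Q$ with $x\ge0$ one has $t\ge t_1$, hence $\frac{(x_+)^2}{4t}\le\frac{R^2}{4t_1}$ and $\epsilon\log(1+R/\sqrt{2t_1\epsilon})\to0$, so $w^\epsilon\le at_2+\frac{R^2}{4t_1}+1$ once $\epsilon$ is small. Taking $C(Q)$ to be the maximum of these finite quantities (and large enough that $\epsilon\le 1/C(Q)$ also enforces $t/\epsilon\ge1$ where it was used) completes the proof of $0\le w^\epsilon\le C(Q)$. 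The step most likely to be mishandled is the observation that the rescaled points $(t/\epsilon,x/\epsilon)$ sweep out the full cone $|x|\lesssim (R/t_1)t$, which is far wider than the KPP spreading cone $|x|<2\sqrt{1-a}\,t$; this is why one must argue with the explicit Gaussian lower bound for $u$ rather than with the known spreading behavior of $\partial_t\phi-\partial_{xx}\phi=\phi(1-a-\phi)$.

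Finally, for the continuation of the statement (local equicontinuity of $\{w^\epsilon\}$): with the uniform $L^\infty$ bound established, the coefficient $1-u^\epsilon-av^\epsilon$ in \eqref{eq:3-2} is bounded, and a Bernstein-type interior gradient estimate — differentiating \eqref{eq:3-2} and applying the maximum principle, as in \cite{Evans_1989} — gives a uniform-in-$\epsilon$ bound on $|\partial_x w^\epsilon|$ on each compact subset of $(0,\infty)\times\mathbb{R}$; the equation then controls $\partial_t w^\epsilon$, so $\{w^\epsilon\}$ is locally equi-Lipschitz and, by Arzelà--Ascoli, precompact in $C_{\mathrm{loc}}((0,\infty)\times\mathbb{R})$, which is what validates the passage to the half-relaxed limits $w_*,w^*$ in Section~\ref{S3}.
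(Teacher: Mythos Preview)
Your argument for the two--sided bound is correct, but it proceeds differently from the paper. The paper follows Evans--Souganidis \cite{Evans_1989} and works directly with the transformed equation \eqref{eq:3-2}: it builds two explicit super-solutions $\overline z_1^\epsilon(t,x)=\tfrac{2\epsilon}{|x|}+2t-\epsilon\log\theta_0$ on $(0,\infty)\times(-\infty,0)$ and $\overline z_2^\epsilon(t,x)=\tfrac{|x+2\delta|^2}{4t}+t+\tfrac{\epsilon}{2}\log t+C_\delta$ on $(0,\infty)\times(-\delta,\infty)$, and patches them via comparison. You instead stay with the original variable $u$, obtain the pointwise lower bound $u(t,x)\ge\theta_0 e^{-at}\overline\Phi(x/\sqrt{2t})$ from heat-kernel comparison, and then rescale. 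Both routes yield essentially the same barrier shape $\sim (x_+)^2/(4t)+O(t)$; your approach is arguably more elementary and makes the Gaussian origin of the bound transparent, while the paper's approach stays closer to the Hamilton--Jacobi framework used throughout Section~\ref{S3} and avoids invoking the heat semigroup.

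One correction on your final paragraph: the half-relaxed limits $w_*,w^*$ of Barles--Perthame require only the local $L^\infty$ bound you just proved, not equicontinuity; indeed the whole point of that device is to bypass compactness in $C_{\mathrm{loc}}$. So the Bernstein estimate, while true, is unnecessary here, and your closing sentence misidentifies what validates \eqref{eq:w_star}. (The ``Furthermore'' in the statement appears to be a dangling fragment in the paper; nothing beyond the $L^\infty$ bound is actually used downstream.)
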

\begin{proof}
Since $u^\epsilon \leq 1$, we have $w^\epsilon \geq 0$ by definition. It remains to show the upper bound.
We follow the ideas in \cite[Lemma 2.1]{Evans_1989} to construct suitable super-solutions and apply the comparison principle to derive the desired result. First, fix $\delta \in (0,1)$ such that
$$
Q \subset ([0,1/\delta] \times (-\infty, -\delta]) \cup ([\delta, 1/\delta] \times [-\delta, 1/\delta]).
$$
We will estimate $w^\epsilon$ on $[0,1/\delta] \times (-\infty, -\delta]$ and $[\delta, 1/\delta] \times [-\delta, 1/\delta]$ separately.

Define $Q_0:= (0,\infty) \times (-\infty, 0)$, and  for $\epsilon \in (0,1/2]$,
$$
\overline{z}^\epsilon_1(t,x):=\frac{2\epsilon}{|x|}+2t-\epsilon\log\theta_0 \quad \text{ in }\, Q_0,
$$
where $\theta_0$ is specified in ${\rm (H_\infty)}$.
We claim that for $\epsilon \in (0,1/2]$,
\begin{equation}\label{eq:c_delta}
w^\epsilon(t,x) \leq \overline{z}_1^\epsilon(t,x) \quad \text{ in }\,Q_0.
\end{equation}
To this end, first observe that $\overline{z}_1^\epsilon$ is a (classical) super-solution of \eqref{eq:3-2} in $ Q_0=(0,\infty)\times (-\infty, 0)$. Indeed, for $\epsilon \in (0,1/2]$, $w^\epsilon \leq \overline{z}^\epsilon$ on $\partial Q_0$, and
\begin{align*}
&\partial_t \overline{z}_1^\epsilon - \epsilon \partial_{xx} \overline{z}_1^\epsilon + |\partial_x \overline{z}_1^\epsilon|^2 + 1 - u^\epsilon - av^\epsilon\\
&= 2 + \frac{4\epsilon^2}{|x|^3 }\left( \frac{1}{|x|} - 1\right) + 1 - u^\epsilon - av^\epsilon\\
&\geq 1 +\frac{4\epsilon^2}{|x |^3} \left( \frac{1}{|x|} - 1\right) \geq 0.
\end{align*}
By maximum principle, \eqref{eq:c_delta} holds. This proves, for $\epsilon \in (0,1/2]$,
\begin{equation}\label{eq:c_delta2}
w^\epsilon(t,x) \leq C_\delta \quad \text{ in }\,[0,1/\delta] \times (-\infty, -\delta],
\end{equation}
by taking
 $C_\delta = \sup_{0 <\epsilon \leq 1/2} \sup_{[0,1/\delta] \times (-\infty, -\delta]} \overline{z}^\epsilon_1(t,x)$.

It remains to show, for $\epsilon \in (0,2\delta]$, the uniform boundedness of $w^\epsilon$ in $[\delta, 1/\delta]\times [-\delta, 1/\delta]$. To this end, define
\begin{equation*}
\overline{z}_2^\epsilon(t,x)=\frac{|x+2\delta|^2}{4t}+  t+ \frac{\epsilon}{2} \log t+C_\delta,
\end{equation*}
where $C_\delta>0$ is given in \eqref{eq:c_delta2}. Then
$\overline{z}_2^\epsilon$ is a (classical) super-solution of \eqref{eq:3-2} in $(0,\infty)\times (-\delta,\infty)$.

Moreover,  for each $\tau>0$, $w^\epsilon(\tau,x)$ is finite for all $x \in \mathbb{R}$. Since
$$
\begin{cases}
w^\epsilon(\tau,x) <\infty = \overline{z}_2^\epsilon(0,x) \quad \text{ for }x \geq -\delta,\\
w^\epsilon(t+\tau,-\delta) \leq C_\delta \leq  \overline{z}_2^\epsilon(t,-\delta) \quad \text{ for }t \in  [0,1/\delta-\tau],
\end{cases}
$$
we obtain by comparison that
$$
w^\epsilon(t+\tau, x) \leq \overline{z}_2^\epsilon(t,x)  \quad \text{ for } (t,x) \in [0,1/\delta-\tau] \times [-\delta,\infty).
$$
Letting $\tau \to 0$, we show that
$$
\sup_{[\delta,1/\delta] \times [-\delta,1/\delta]}  w^\epsilon(t, x)  \leq \sup_{[\delta,1/\delta] \times [-\delta,1/\delta]}  \overline{z}_2^\epsilon(t,x) <\infty.
$$
This completes the proof of the local bounds of $w^\epsilon$.
\end{proof}

Having established the $C_{loc}$ bounds, we will pass to the (upper and lower) limits of $w^\epsilon$ by using the half-relaxed limit method, {which is} due to  Barles and Perthame \cite{BP1987}. We begin with the following definition: 
\begin{equation}\label{eq:w_star}
w^*(t,x)=\hspace{-.3cm}\limsup\limits_{\scriptsize \begin{array}{c}\epsilon \to 0\\ (t',x') \to (t,x)\end{array}} \hspace{-.3cm} w^\epsilon (t',x')\quad \mathrm{and} \quad w_*(t,x)=\hspace{-.3cm}\liminf\limits_{\scriptsize \begin{array}{c}\epsilon \to 0\\ (t',x') \to (t,x)\end{array}} \hspace{-.3cm}w^\epsilon (t',x').
\end{equation}
\begin{remark}\label{rmk:Qdelta}
By \eqref{eq:c_delta}, it follows that for any $\delta\in(0,1)$ and $\epsilon$ small,
\begin{equation*}
  \sup_{[0,1/\delta] \times(-\infty,-\delta]} w^\epsilon(t,x) \leq  \sup_{[0,1/\delta] \times(-\infty,-\delta]} \left[\frac{2\epsilon}{|x|}+2t-\epsilon\log\theta_0\right].
\end{equation*}
Sending first $\epsilon \to 0$ then $\delta\to 0$, we deduce $w_*(0,x)=w^*(0,x) = 0$ for all $x \leq 0$.
\end{remark}

\begin{lemma}\label{lem:3-2}
Assume that \eqref{eq:v_ep} holds for some  $c_1 >\tilde{c}_1 \geq 2$. Then
\begin{itemize}
 \item[{\rm(i)}] { $w^*$ is upper semicontinuous and is a viscosity  sub-solution of
\begin{align}\label{eq:3-3}
\left \{
\begin{array}{ll}
\medskip
\min\{\partial_t w+H_1(t,x,\partial_x w) ,w\}=0,&\text{in}~ (0,\infty)\times\mathbb{R},\\
w(0,x)=\left\{\begin{array}{ll} 0,& \text{ for } x\in(-\infty,0],\\
\infty, &\text{ for } x\in(0,\infty);
\end{array}
\right.
\end{array}
\right.
\end{align}}
\item[{\rm(ii)}]{ $w_*$ is lower semicontinuous and is a viscosity super-solution of
\begin{align}\label{eq:3-4}
\left\{
\begin{array}{ll}
\medskip
\min\{\partial_tw+H_2(t,x,\partial_x w),w\}=0,&\text{in}~ (0,\infty)\times\mathbb{R},\\
w(0,x)=\left\{\begin{array}{ll} 0, &\text{ for } x\in(-\infty,0],\\
\infty, &\text{ for } x\in(0,\infty),
\end{array}
\right.
\end{array}
\right.
\end{align}}
\end{itemize}
where
\begin{equation}\label{eq:hamiltonian}
H_1(t,x,p) = |p|^2+1-a\chi_{\{x\leq c_1t\}}  ~\text{ and } ~ H_2(t,x,p) =|p|^2+1-a\chi_{\{\tilde c_1t<x<c_1t\}}.
\end{equation}
\end{lemma}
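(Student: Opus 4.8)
The plan is to apply the half-relaxed limit method of Barles and Perthame \cite{BP1987}, as in \cite{Evans_1989}. First I would record the regularity needed: since ${\rm (H_\infty)}$ gives $u_0 \geq \theta_0 > 0$ on a half-line (and $u_0 \not\equiv 0$), the strong maximum principle yields $u(t,x) > 0$ for all $t > 0$, so $u^\epsilon > 0$ and $w^\epsilon = -\epsilon \log u^\epsilon$ is smooth and a \emph{classical} solution of \eqref{eq:3-2} in $(0,\infty)\times\mathbb{R}$. By Lemma \ref{lem:3-1}, the family $\{w^\epsilon\}$ is locally uniformly bounded (and $\geq 0$), so the half-relaxed limits $w^*, w_*$ from \eqref{eq:w_star} are finite on $[(0,\infty)\times\mathbb{R}]\cup[\{0\}\times(-\infty,0)]$, with $w^*$ automatically upper semicontinuous and $w_*$ lower semicontinuous. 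Moreover $w^*, w_* \geq 0$ because $w^\epsilon \geq 0$, which already supplies the obstacle half of both equations.

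Next I would verify the interior subsolution inequality in {\rm (i)} by the standard test-function argument. Let $\phi \in C^\infty$ be such that $w^* - \phi$ has a strict local maximum at $(t_0,x_0) \in (0,\infty)\times\mathbb{R}$. If $w^*(t_0,x_0) = 0$, then $\min\{\partial_t\phi + H_1(t_0,x_0,\partial_x\phi),\, w^*(t_0,x_0)\} \leq 0$ trivially, so assume $w^*(t_0,x_0) > 0$. A standard argument (see \cite{BP1987,Evans_1989}) produces $\epsilon_k \downarrow 0$ and interior local maxima $(t_k,x_k) \to (t_0,x_0)$ of $w^{\epsilon_k} - \phi$ with $w^{\epsilon_k}(t_k,x_k) \to w^*(t_0,x_0) > 0$, hence $u^{\epsilon_k}(t_k,x_k) = \exp(-w^{\epsilon_k}(t_k,x_k)/\epsilon_k) \to 0$. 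Since $w^{\epsilon_k}$ is smooth, at $(t_k,x_k)$ one has $\partial_t w^{\epsilon_k} = \partial_t\phi$, $\partial_x w^{\epsilon_k} = \partial_x\phi$, $\partial_{xx} w^{\epsilon_k} \leq \partial_{xx}\phi$; substituting into \eqref{eq:3-2} yields
\[
\partial_t\phi + |\partial_x\phi|^2 + 1 - u^{\epsilon_k} - av^{\epsilon_k} \ \leq\ \epsilon_k\,\partial_{xx}\phi \qquad \text{at } (t_k,x_k).
\]
Letting $k \to \infty$, with $\epsilon_k\partial_{xx}\phi \to 0$ and $u^{\epsilon_k}(t_k,x_k)\to 0$, and invoking the \emph{upper} bound in \eqref{eq:v_ep} --- which, because $\{x \leq c_1 t\}$ is closed and its indicator upper semicontinuous, forces $\limsup_k v^{\epsilon_k}(t_k,x_k) \leq \chi_{\{x \leq c_1 t\}}(t_0,x_0)$ --- I would conclude $\partial_t\phi + |\partial_x\phi|^2 + 1 - a\chi_{\{x \leq c_1 t\}} \leq 0$ at $(t_0,x_0)$, i.e.\ $\partial_t\phi + H_1(t_0,x_0,\partial_x\phi) \leq 0$.

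The supersolution property in {\rm (ii)} is the symmetric statement. If $w_* - \phi$ has a strict local minimum at $(t_0,x_0)\in(0,\infty)\times\mathbb{R}$ then $w_*(t_0,x_0)\geq 0$, and taking interior local minima $(t_k,x_k)\to(t_0,x_0)$ of $w^{\epsilon_k}-\phi$ gives $\partial_{xx}w^{\epsilon_k}\geq\partial_{xx}\phi$ there; \eqref{eq:3-2} together with $u^{\epsilon_k}\geq 0$ then yields
\[
\partial_t\phi + |\partial_x\phi|^2 + 1 - av^{\epsilon_k} \ \geq\ \epsilon_k\,\partial_{xx}\phi \qquad \text{at } (t_k,x_k),
\]
and passing to the limit via the \emph{lower} bound in \eqref{eq:v_ep} --- where now $\{\tilde c_1 t < x < c_1 t\}$ is open, its indicator lower semicontinuous, so $\liminf_k v^{\epsilon_k}(t_k,x_k)\geq\chi_{\{\tilde c_1 t < x < c_1 t\}}(t_0,x_0)$ --- I get $\partial_t\phi + H_2(t_0,x_0,\partial_x\phi)\geq 0$; with $w_*\geq 0$ this is exactly the viscosity supersolution inequality for \eqref{eq:3-4}. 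For the initial data, read in the relaxed sense of the statement: $w^*(0,x) = w_*(0,x) = 0$ for $x \leq 0$ is Remark \ref{rmk:Qdelta}, and for $x > 0$ the only point requiring attention is $w_*(0,x) = +\infty$; this follows from the linear comparison $u(t,x) \leq e^{t}(4\pi t)^{-1/2}\int_{-\infty}^{x_0} e^{-(x-y)^2/(4t)}\,dy$ (valid since $1 - u - av \leq 1$ and $u_0 \leq \chi_{(-\infty,x_0]}$ by ${\rm (H_\infty)}$), which after rescaling gives $w^\epsilon(t,x) \geq (x - \epsilon x_0)^2/(4t) - t$ wherever $x > \epsilon x_0$, and the right-hand side tends to $+\infty$ as $\epsilon \to 0$ and $(t,x)\to(0,x_\infty)$ with $x_\infty > 0$.

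The hard part is the bookkeeping forced by the discontinuity of the reaction term $av^\epsilon$ across the lines $x = c_1 t$ and $x = \tilde c_1 t$: in the subsolution test only the \emph{upper} barrier $\chi_{\{x \leq c_1 t\}}$ (defining set closed) is available when one passes to the limit, while in the supersolution test only the \emph{lower} barrier $\chi_{\{\tilde c_1 t < x < c_1 t\}}$ (defining set open) is, and the two one-sided limits of $v^\epsilon$ must be identified with the \emph{correct} semicontinuous representative of the indicator at a limit point $(t_0,x_0)$ that may lie exactly on an interface. It is precisely this asymmetry --- $H_2 \geq H_1$, with equality off the interfaces --- that produces the two distinct limiting problems \eqref{eq:3-3}--\eqref{eq:3-4} and that will nonetheless still let a comparison argument (Lemma \ref{lem:3-3}) squeeze $0 \leq w_* \leq w^* \leq w_1$.
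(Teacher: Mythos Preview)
Your proof is correct and follows essentially the same half-relaxed limit approach as the paper, which simply cites \cite[Lemma 2.2]{Evans_1989} for the interior viscosity inequalities; you have helpfully made explicit the role of the semicontinuity of the indicator functions $\chi_{\{x\leq c_1 t\}}$ and $\chi_{\{\tilde c_1 t < x < c_1 t\}}$ in passing to the limit through the discontinuous term $av^\epsilon$, which is the only nonstandard point. The one genuine difference is in the verification of $w_*(0,x)=+\infty$ for $x>0$: the paper invokes the exponential supersolution estimate \eqref{eq:esta}, namely $u(t,x)\leq e^{-\lambda x+(\lambda^2+1)t+A}$, which after rescaling gives $w^\epsilon(t',x')\geq \lambda x'-(\lambda^2+1)t'-\epsilon A$ for every $\lambda>0$, and then sends $\lambda\to\infty$; you instead bound $u$ by the heat-kernel solution of the linearized equation. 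Both arguments are valid and yield the same conclusion, but the paper's is slightly cleaner because the bound \eqref{eq:esta} has already been established in Step~1 of the proof of Proposition~\ref{prop:1}.
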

\begin{proof}
By construction, $w^*, w_*$ are respectively upper and lower semicontinuous. By arguments similar to \cite[Lemma 2.2]{Evans_1989}, one can verify that $w^*, w_*$ are respectively the sub- and super-solutions of the Hamilton-Jacobi equations in $(0,\infty) \times \mathbb{R}$. It remains to check the initial conditions. By Remark \ref{rmk:Qdelta}, we have $w_*(0,x) = w^*(0,x)= 0$ for $x \leq 0$. It remains to check that $w_*(0,x) = \infty$ for $x >0$. For this, we use \eqref{eq:esta} to obtain, for each $\lambda>0$,
$$
w_\epsilon(t',x') \geq \lambda x' - (\lambda^2 + 1) t' + \epsilon A  \quad \text{ for } (t',x') \in (0,\infty)\times \mathbb{R}.
$$
Taking limit inferior as $\epsilon \to 0$ and $(t',x') \to (0,x)$, we have (still for each $\lambda>0$) $w_*(0,x) \geq \lambda x$ for $x >0$. Letting $\lambda \to \infty$, we deduce $w_*(0,x) = \infty$ for all $x >0$. 
\end{proof}


To study the limits $w^*$ and $w_*$ of $w^\epsilon$, we introduce the auxiliary functions $w_i$ ($i=1,2$) as follows. For $(t,x) \in (0,\infty)\times\mathbb{R}$, set $\mathbb{X} = H^1_{\mathrm{loc}}([0,\infty))$ and
$$
\mathbb{X}^{t,x}=\left \{\gamma \in \mathbb{X}\,:\, \gamma(0) = x,\, \gamma(t) \leq   0\right\}.
$$
A mapping $\vartheta: \mathbb{X} \to [0,\infty]$ is a {\it stopping time} provided that for all $\gamma, \hat\gamma \in \mathbb{X}$ and all $s \geq 0$:
\begin{equation}\label{stoppingtime}
  \left\{
\begin{array}{l}
\text{ if }\gamma(\tau) = \hat\gamma(\tau) \text{ for }\tau \in [0,s]\text{ and }\vartheta[\gamma(\cdot)]\leq s,\\
\text{ then }\, \vartheta[\gamma(\cdot)] = \vartheta[\hat\gamma(\cdot)].
\end{array}
\right.
\end{equation}
Let $S$ be an open set in $\mathbb{R}$ and $\gamma \in \mathbb{X}^{t,x}$. An example of stopping time is the first exit time $\tau$ 
from $S$, given by $\tau=\inf\left\{s \in [0,\infty):\gamma(s)\notin S\right\}.$

Denote by $\Theta$ the set of all stopping times. Then for $i=1,2$, $(t,x) \in (0,\infty)\times\mathbb{R}$, we define 
\begin{equation}\label{eq:w1}
w_i(t,x) = \sup_{\vartheta \in \Theta} \left[\inf_{\gamma(\cdot) \in \mathbb{X}^{t,x}} \int_0^{t \wedge \vartheta[\gamma(\cdot)]}L_i({t-s},\gamma(s),\dot\gamma(s))\,ds\right]
\end{equation}
and
\begin{equation}\label{eq:B3-1'}
J_i(t,x) = \inf_{\gamma(\cdot)\in\mathbb{X}^{t,x}} \int_0^t L_i(t-s,\gamma(s),\dot\gamma(s))\,ds,
\end{equation}
where for $i=1,\,2$, $L_i(t,x,q)$ is the Legendre transformation of $H_i(t,x,p)$, 
i.e., $\displaystyle L_i(t,x,q) = \max_{p\in\mathbb{R}} \left[q \cdot p - H_i(t,x,q)\right]$. Precisely,
\begin{equation}\label{eq:legendre}
L_1(t,x,q) =  \frac{|q|^2}{4} - 1 + a \chi_{\left\{x \leq c_1 t\right\}}, \quad 
L_2(t,x,q) = \frac{|q|^2}{4} - 1 + a \chi_{\left\{\tilde{c}_1 t <x < c_1 t\right\}}.
\end{equation}
We state the following calculus lemma, whose proof is postponed to Appendix \ref{sec:A}.
\begin{proposition}\label{prop:B2}
Assume that $c_1 >\tilde{c}_1 \geq 2$. 
Then
 \begin{itemize}

\item[{\rm (a)}] $J_1$ can be expressed as follows.
\begin{equation}\label{eq:propA1}
J_1(t,x)=\left \{
\begin{array}{ll}
\medskip
\frac{t}{4}(\frac{x^2}{t^2}-4), &  \text{ for } \frac{x}{t}\geq c_1\\
\medskip
\left( \frac{c_1}{2}-\sqrt{a}\right)[x-\bar{c}_{\textup{nlp}}t], &\text{ for } c_1-2\sqrt{a}\leq \frac{x}{t}<c_1,\\
\medskip
\frac{t}{4}(\frac{x^2}{t^2}-4(1-a)), &\text{ for } 0\leq\frac{x}{t}<c_1-2\sqrt{a},\\
\medskip
-t(1-a), &\text{ for } \frac{x}{t}<0,
\end{array}
\right.
\end{equation}
where
$\bar{c}_{\textup{nlp}}= \frac{c_1}{2}-\sqrt{a}+\frac{1-a}{\frac{c_1}{2}-\sqrt{a}}$; 
\item[{\rm (b)}] $J_1$ satisfies Freidlin's condition \cite{Freidlin_1985}:
\begin{equation}\label{eq:freidlin}
\begin{cases}
\medskip
\,\,J_1(t,x)= & \hspace{-0.3cm}\inf\limits_{\gamma(\cdot)\in\mathbb{X}^{t,x}}\Bigg\{\displaystyle\int_0^tL_1(t-s,\gamma(s),\dot\gamma(s))\,ds  \Bigg| (t-s,\gamma(s))\in P \text{ for } 0\leq s<t \Bigg\}\\
\,\,\text{for each }& \hspace{-0.3cm} (t,x)\in \partial P, \text { where } P:=\{(t,x):\,J_1(t,x)>0\};
\end{cases}
\end{equation}

\item[{\rm (c)}] There exists $\delta^*$ such that $J_1(t,x) = J_2(t,x)$ in $\{(t,x): x \geq (c_1 - \delta^*)t\}$.

\end{itemize}

\end{proposition}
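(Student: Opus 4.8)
\textbf{Proof plan for Proposition \ref{prop:B2}.}

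The plan is to exploit the variational characterizations \eqref{eq:B3-1'} of $J_1$ and $J_2$ and reduce everything to a one-dimensional calculus of variations problem with a piecewise-constant potential. For part (a), I would first observe that since $L_1(t-s,\gamma(s),\dot\gamma(s)) = \frac{|\dot\gamma(s)|^2}{4} - 1 + a\chi_{\{\gamma(s)\le c_1(t-s)\}}$, the running cost depends on the path only through its kinetic energy and through whether $\gamma$ lies to the left or right of the moving line $y = c_1\tau$. Along any fixed path, the ``bonus'' $-a$ is collected precisely on the time-set where $\gamma$ is to the left of that line. Because $L_1$ is convex in $\dot\gamma$ and the potential is constant on each side, the optimal path is piecewise linear: it travels at constant speed within the region $\{y > c_1\tau\}$ and at a (different) constant speed within $\{y \le c_1\tau\}$, possibly with one crossing of the line. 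For $x/t \ge c_1$ the whole optimal path stays in $\{y > c_1\tau\}$ (no bonus available), giving the straight-line value $\frac{t}{4}(\frac{x^2}{t^2}-4)$; for $0 \le x/t < c_1 - 2\sqrt a$ it is cheapest to stay entirely in $\{y < c_1\tau\}$ and collect the full bonus, giving $\frac{t}{4}(\frac{x^2}{t^2} - 4(1-a))$; for $x/t < 0$ one additionally checks that the value $-t(1-a)$ is attained (e.g.\ $\gamma \equiv$ constant is suboptimal but the infimum over paths reaching a point with $x<0$ is $-t(1-a)$, achieved in the limit). The genuinely interesting regime is $c_1 - 2\sqrt a \le x/t < c_1$, where the optimum spends time $\tau^*$ ahead of the front (in $\{y>c_1\tau\}$, moving at speed $c_1$, hence riding the line) and the remaining time behind it; optimizing over the switching time $\tau^*$ and using Lagrange/first-order conditions yields the linear-in-$x$ expression $(\frac{c_1}{2}-\sqrt a)(x - \bar c_{\textup{nlp}} t)$. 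The constant $\bar c_{\textup{nlp}} = \frac{c_1}{2} - \sqrt a + \frac{1-a}{\frac{c_1}{2}-\sqrt a}$ emerges as the unique slope at which $J_1(t,x) = 0$ on this middle branch, matching continuously with the branch $J_1 = \frac{t}{4}(\frac{x^2}{t^2}-4(1-a))$ at $x/t = c_1 - 2\sqrt a$ and with $\frac{t}{4}(\frac{x^2}{t^2}-4)$ at $x/t = c_1$. I would record these matching computations but leave the arithmetic to the reader or the appendix.

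For part (b), Freidlin's condition asserts that the infimum defining $J_1$ on the boundary $\partial P = \{J_1 = 0\}$ is unchanged if we restrict to paths $\gamma$ that stay in $P$ (equivalently, never reach the region $\{J_1 > 0\}$ — here I am following the convention that $P$ is the positivity set and the constraint keeps $(t-s,\gamma(s))$ inside $P$) up to the terminal time. Using the explicit formula from (a), the set $P = \{J_1 > 0\}$ is $\{(t,x): x > \bar c_{\textup{nlp}} t\}$ above the line $x/t = c_1 - 2\sqrt a$ and $\{x/t > 2\sqrt{1-a}\}$ below it — a region bounded below by a single ray through the origin (the boundary is exactly the front). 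The claim then reduces to showing the optimal path from $(t,x)\in\partial P$ back to time $0$ can be taken so that $(t-s,\gamma(s))$ does not dip into $\{J_1 > 0\}$, which by the structure of the minimizers (they move monotonically toward the origin region along essentially straight segments) follows because any excursion of $\gamma$ into $\{J_1>0\}$ can be replaced by a straightened segment of strictly smaller or equal cost. I would phrase this as: the minimizing path identified in (a) already has this property, so \eqref{eq:freidlin} holds with equality trivially.

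For part (c), the key observation is that $L_1$ and $L_2$ differ only in where the $-a$ bonus is turned off: $L_1$ gives the bonus on all of $\{x \le c_1 t\}$ while $L_2$ withholds it on the additional strip $\{x \le \tilde c_1 t\}$. Hence $J_1 \le J_2$ always, and $J_1 = J_2$ at $(t,x)$ whenever the $J_1$-optimal path from $(0,\cdot)$ to $(t,x)$ never enters the strip $\{y \le \tilde c_1\tau\}$ where the two Lagrangians disagree. From the explicit minimizers in (a), for $x/t$ close to $c_1$ the optimal path stays in a cone close to the line $y = c_1\tau$ (it rides ahead of or just behind the front, both of which lie above $y = \tilde c_1\tau$ since $\tilde c_1 < c_1$), so there is a deficit $\delta^* > 0$ — explicitly determined by requiring that the break point $(\tau^*, c_1\tau^*)$ and the terminal segment all lie in $\{y > \tilde c_1 \tau\}$ — such that for $x/t \ge c_1 - \delta^*$ the $J_1$-minimizer avoids the disagreement strip entirely, forcing $J_1 = J_2$ there. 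The main obstacle across the whole proposition is part (a)'s middle branch: justifying rigorously that the infimum over $H^1_{\mathrm{loc}}$ paths is attained by the two-segment piecewise-linear candidate (existence of minimizers, ruling out multiple crossings of the moving line, and the first-order optimality in the switching time) requires care, though it is a standard convex calculus-of-variations argument once set up; parts (b) and (c) are then essentially corollaries of the explicit formula.
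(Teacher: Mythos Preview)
Your overall strategy matches the paper's: reduce to piecewise-linear minimizers, optimize over the switching time for (a), then read off (b) and (c) from the explicit description of those minimizers. However, there is a systematic sign error running through the proposal that creates genuine gaps in (b) and (c).

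In (a), the term in $L_1$ is $+a\,\chi_{\{\gamma \le c_1(t-s)\}}$, so being to the \emph{left} of the moving line incurs a \emph{penalty} $+a$, not a ``bonus $-a$''. Your formulas are nonetheless the correct ones (they involve $-(1-a) = -1+a$, consistent with the penalty), but the verbal reasoning is inverted: for $x/t\ge c_1$ the straight line is optimal because any excursion below the line adds penalty \emph{and} kinetic cost, not because ``no bonus is available''. Also, for $x<0$ the infimum $-t(1-a)$ is actually attained by the constant path $\gamma\equiv x$ (admissible in $\mathbb{Y}^{t,x}$ since $\gamma(0)=x\le 0$, and $x<0\le c_1 s$ for $s>0$), not merely approached in a limit.

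In (b) the sign confusion becomes a real problem. Freidlin's condition restricts to paths with $(t-s,\gamma(s))$ \emph{inside} $P=\{J_1>0\}$, not paths that avoid it; your phrases ``never reach the region $\{J_1>0\}$'' and ``does not dip into $\{J_1>0\}$'' have the constraint backwards. The correct argument (which is the paper's) is: from (a), $P=\{x>c_0 t\}$ for some $c_0\in(0,c_1)$, and for $(t,x)\in\partial P$ the explicit minimizer is the piecewise-linear path through $(0,0)$, $(\tau,c_1\tau)$, $(t,x)$, which lies in $\overline P$ throughout; hence restricting to paths in $P$ does not raise the infimum.

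In (c), since $L_1\ge L_2$ everywhere (the penalty region $\{x\le c_1 t\}$ for $L_1$ contains that for $L_2$), one has $J_1\ge J_2$, not $J_1\le J_2$ as you wrote. Consequently your argument that the $J_1$-minimizer avoids the strip $\{y\le\tilde c_1 s\}$ only yields $J_1=\int L_1(\hat\gamma_1)=\int L_2(\hat\gamma_1)\ge J_2$, i.e.\ the direction you already have for free. To close the argument you must show the $J_2$-minimizer avoids the strip (so that $J_2=\int L_2(\hat\gamma_2)=\int L_1(\hat\gamma_2)\ge J_1$); this is exactly what the paper does, by bounding $J_2(1,c_1)$ strictly below $\inf_{\gamma\in\mathbb{Y}^{1,c_1}_1}\int L_2$ and invoking continuity near $(1,c_1)$ together with the scaling $J_i(kt,kx)=kJ_i(t,x)$.
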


\begin{lemma}\label{lem:3-4a}
Assume 
\eqref{eq:v_ep} holds for some $c_1 > \tilde{c}_1 \geq 2$. Then 
$$
w_* \geq w_2 \geq J_2 \quad \text{ in } (0,\infty)\times\mathbb{R},
$$
where $w_*$, $w_2$ and $J_2$ are given in \eqref{eq:w_star}, \eqref{eq:w1} and \eqref{eq:B3-1'}, respectively.
\end{lemma}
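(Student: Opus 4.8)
The plan is to establish the two inequalities $w_* \geq w_2$ and $w_2 \geq J_2$ separately, as the latter is an immediate consequence of the definitions. Indeed, $w_2 \geq J_2$ follows directly by taking $\vartheta \equiv +\infty$ (the trivial stopping time) in the supremum defining $w_2$ in \eqref{eq:w1}: for that choice, $t \wedge \vartheta[\gamma(\cdot)] = t$, so the inner infimum is exactly $J_2(t,x)$ as defined in \eqref{eq:B3-1'}, and $w_2$, being a supremum over all stopping times, dominates this particular value. So the real content is the comparison $w_* \geq w_2$, which identifies the half-relaxed lower limit of the logarithmic transform with the value function $w_2$ of an optimal-stopping control problem associated with the lower Hamiltonian $H_2$.

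The key step is to recognize $w_2$ as the (unique) viscosity solution of the obstacle problem \eqref{eq:3-4}, and then invoke a comparison principle against the super-solution $w_*$ supplied by Lemma \ref{lem:3-2}(ii). Concretely, I would first cite the standard representation formula from the theory of optimal stopping / state-constrained control (as in Evans--Souganidis \cite{Evans_1989} or Barles--Perthame \cite{BP1987}): the function $w_2$ defined by \eqref{eq:w1} is a viscosity solution of $\min\{\partial_t w + H_2(t,x,\partial_x w), w\} = 0$ in $(0,\infty)\times\mathbb{R}$, with the appropriate initial trace $w_2(0,x) = 0$ for $x \leq 0$ and $w_2(0,x) = +\infty$ for $x > 0$ (the latter because curves in $\mathbb{X}^{t,x}$ with $x>0$ and $t$ small are forced to have large kinetic energy, exactly as in the proof of Lemma \ref{lem:3-2}). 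A subtlety here is that the running cost $L_2$ is discontinuous across the lines $x = \tilde c_1 t$ and $x = c_1 t$; however, since $L_2$ is lower semicontinuous (the indicator $\chi_{\{\tilde c_1 t < x < c_1 t\}}$ of an \emph{open} set is l.s.c.) and $w_2$ is defined via an infimum over paths, the dynamic-programming argument still yields that $w_2$ is a viscosity super-solution, and this is all we need. Then, because $w_*$ is a viscosity super-solution of the \emph{same} equation \eqref{eq:3-4} with the \emph{same} initial data (both agree: $0$ on the left, $+\infty$ on the right), and $w_2$ is characterized by \eqref{eq:w1} as the maximal subsolution / the solution itself, the comparison principle for the obstacle problem gives $w_* \geq w_2$ on $(0,\infty)\times\mathbb{R}$.

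In practice I would phrase it slightly differently to be safe about the direction of comparison: show directly that $w_2 \leq w_*$ by a sub-solution/super-solution comparison, using that $w_2$ is a subsolution of \eqref{eq:3-4} while $w_*$ is a supersolution, and that $w_2(0,\cdot) \leq w_*(0,\cdot)$ (equality, in fact). One then checks that the Hamiltonian $H_2(t,x,p) = |p|^2 + 1 - a\chi_{\{\tilde c_1 t < x < c_1 t\}}$ is, for fixed $p$, lower semicontinuous in $(t,x)$ and satisfies the usual structural conditions (coercivity and convexity in $p$), so that the Perron/comparison machinery of Barles--Perthame \cite{BP1987} applies despite the spatial discontinuity, the point being that the discontinuity is of the "good" sign relative to the obstacle formulation. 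The main obstacle, and where care is genuinely required, is handling this discontinuity of $L_2$ and $H_2$ across the moving interfaces: one must ensure the dynamic programming principle for $w_2$ and the comparison principle are valid in this discontinuous-coefficient setting. I expect this to be dispatched by the same argument already used in Lemma \ref{lem:3-2} (approximation by the smooth $v^\epsilon$ and the l.s.c.\ structure), together with a remark that optimal paths spend zero Lebesgue time on the interface lines, so the value of $w_2$ is unaffected by how $L_2$ is assigned there.
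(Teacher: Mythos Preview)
Your outline captures the right structure---the inequality $w_2 \geq J_2$ is indeed immediate from $\vartheta \equiv +\infty$, and the real work is the comparison $w_* \geq w_2$. However, your direct application of a comparison principle between $w_*$ and $w_2$ has a genuine gap: both functions take the value $+\infty$ on $\{0\}\times(0,\infty)$, and the comparison theorems you invoke (from \cite{Evans_1989} or \cite{BP1987}) require at least one of the two to have finite initial data. The paper explicitly flags this: ``Here the fact that $w_{2,\eta}(0,x) < \infty$ for all $x$ is crucial.'' Your proposal to simply match the infinite initial conditions and then cite comparison does not get around this obstruction.

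The paper's proof handles this by an approximation. It replaces the infinite initial data by a bounded one $\eta\zeta$ (with $\zeta$ smooth, vanishing on $(-\infty,0]$, positive on $(0,\infty)$), obtains via \cite[Theorem D.1]{Evans_1989} a unique Lipschitz solution $w_{2,\eta}$ of \eqref{eq:3-4} with this bounded data and an explicit representation formula analogous to \eqref{eq:w1}, and then applies comparison to get $w_* \geq w_{2,\eta}$ for each $\eta>0$. The remaining (and nontrivial) step is to show $w_{2,\eta} \nearrow w_2$ as $\eta \to \infty$: the paper argues by contradiction, fixing a near-optimal stopping time $\vartheta_\infty$ for $w_2$, extracting near-optimal paths $\gamma_\eta$ for $w_{2,\eta}$, showing these are bounded in $H^1([0,t])$, and passing to a weak limit that must land in $\mathbb{X}^{t,x}$. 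Your proposal omits this entire approximation-and-limit argument; without it, the comparison step is not justified. Your side remarks about the discontinuity of $H_2$ and $L_2$, while reasonable, are not where the difficulty actually lies---the paper does not treat that as an issue at all.
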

\begin{proof}
First, by adapting arguments in \cite[Lemma 3.1]{Evans_1989}, we show
\begin{equation}\label{goalwstar}
 w_*\geq w_2 \quad \text{ in } (0,\infty)\times  \mathbb{R}.
\end{equation}
Let $\eta>0$ and fix a function $\zeta\in C^\infty(\mathbb{R})$ satisfying
\begin{align}\label{eq:zeta}
\left\{\begin{array}{ll}
\medskip
\zeta\equiv 0 \text{ on } (-\infty,0], \zeta>0 \text{ on } (0,\infty),\\
0\leq \zeta\leq 1.
\end{array}
\right.
\end{align}

Consider now the auxiliary problem:
\begin{equation}\label{eq:problemeta}
\left\{
\begin{array}{ll}
\medskip
\min\{\partial_tw+H_2(t,x,\partial_x w),w\}=0,&\mathrm{in }~ (0,\infty)\times\mathbb{R},\\ 
w(0,x)=\eta\zeta,  &\text{on }  \mathbb{R}.
\end{array}
\right.
\end{equation}
Since the initial data of \eqref{eq:problemeta} is bounded, it follows from \cite[Theorem D.1]{Evans_1989} that \eqref{eq:problemeta} has a unique, Lipschitz  solution $w_{2,\eta}$ given by 
\begin{equation}\label{eq:w2mu}
\begin{split}
&w_{2,\eta}(t,x) =\\
& \sup_{\vartheta \in \Theta} \inf_{\gamma(\cdot) \in \mathbb{X}}\left\{ \int_0^{t \wedge \vartheta[\gamma(\cdot)]} \hspace{-0.1cm} L_2(t-s,\gamma(s),\dot\gamma(s))\,ds+  \chi_{\{\vartheta[\gamma(\cdot)] \geq t\}}\eta \zeta(\gamma(t))~
\Bigg|\gamma(0)=x\right\}.
\end{split}
\end{equation}
Since (i) $w_{2,\eta}(0,x)$ is uniformly bounded, (ii)  $w_*(0,x)\geq w_{2,\eta}(0,x)$ for all $x\in \mathbb{R}$ and (iii) $w_*$ is 
a viscosity super-solution of \eqref{eq:3-4}, it follows by comparison that 
$$w_*\geq w_{2,\eta} \quad \text{ in } (0,\infty)\times \mathbb{R}\text{ for each }\eta>0.$$
(Even though $w_*(0,x) = \infty$ for all $x>0$, it suffices to observe that $w_* - w_{2,\eta}$ cannot have negative interior minimum.
Here the fact that $w_{2,\eta}(0,x) <\infty$ for all $x$ is crucial, see \cite[Theorem B.1]{Evans_1989} for details). In what follows, we deduce $ w_{2,\eta}\to w_2$ as $\eta\to \infty$ and thus \eqref{goalwstar} holds.

Indeed, by \eqref{eq:w1} and \eqref{eq:w2mu}, it is easily seen that  $w_{2,\eta}$ is nondecreasing in $\eta$, and $w_{2,\eta}\leq w_2$ for all $\eta>0$, whence $w_{2,\eta}\to w_{2,\infty}$ pointwise  as $\eta\to\infty$ for some function  $w_{2,\infty}$ satisfying $0\leq w_{2,\infty}\leq w_{2}$. It remains to prove  $ w_{2,\infty}= w_{2}$. If not, then there are some $(t,x)\in(0,\infty)\times\mathbb{R}$, $\delta>0$ and $\eta_0>0$ such that
 \begin{equation}\label{conreadition}
   w_{2,\eta}(t,x)+3\delta< w_{2}(t,x)\,\quad \text{ for all } \eta\geq\eta_0.
 \end{equation}

According to definition \eqref{eq:w1}, we choose some $\vartheta_\infty\in \Theta$ such that
\begin{equation}\label{inequality10}
  w_2(t,x)\leq \inf_{\gamma(\cdot) \in \mathbb{X}^{t,x}} \int_0^{t \wedge \vartheta_\infty[\gamma(\cdot)]}L_2({t-s},\gamma(s),\dot\gamma(s))\,ds+\delta.
\end{equation}
By \eqref{eq:w2mu}, for any $\eta\geq\eta_0$ we further choose some $\gamma_\eta \in \mathbb{X}$ satisfying $\gamma_\eta(0)=x$  such that
\begin{equation}\label{inequality11}
  w_{2,\eta}(t,x) \geq   \int_0^{t \wedge \vartheta_\infty[\gamma_\eta(\cdot)]}L_2({t-s},\gamma_\eta(s),\dot\gamma_\eta(s))\,ds+  \chi_{\{\vartheta_\infty[\gamma_\eta(\cdot)] \geq t\}}\eta \zeta(\gamma_\eta(t))-\delta.
\end{equation}
Then we can reach a contradiction in two steps. First, we claim that $\vartheta_\infty[\gamma_{\eta}] \geq t$ for all $\eta \in [\eta_0,\infty)$. Suppose not, then
there exists some $\eta \in [\eta_0,\infty)$ such that  $\vartheta_\infty[\gamma_{\eta}]<t$.
 Then we can find some $\tilde{\gamma}_\eta\in\mathbb{X}$ such that
  $$\tilde{\gamma}_\eta=\gamma_\eta\quad \text{ in }\,\,[0,\vartheta_\infty[\gamma_\eta]]\,\quad \text{ and }\,\quad \tilde{\gamma}_\eta(t)= 0,$$
so that $\tilde{\gamma}_\eta\in\mathbb{X}^{t,x}$. Since $\vartheta_\infty[\gamma_{\eta}]<t$, by definition of stopping time, we get $\vartheta_\infty[\tilde{\gamma}_\eta]=\vartheta_\infty[\gamma_\eta]<t$.  Using \eqref{inequality10} and \eqref{inequality11}, we reach a contradiction:
       \begin{equation*}
    \begin{split}
  w_{2,\eta}(t,x)+2\delta\geq&
\int_0^{\vartheta_\infty[\tilde{\gamma}_\eta(\cdot)]}L_2({t-s},\tilde{\gamma}(s),\dot{\tilde{\gamma}}(s))\,ds+\delta\\
 \medskip
\geq&\inf_{\gamma(\cdot) \in \mathbb{X}^{t,x}} \int_0^{t \wedge \vartheta_\infty[\gamma(\cdot)]}L_2({t-s},\gamma(s),\dot\gamma(s))\,ds+\delta\geq w_2(t,x).
\end{split}
\end{equation*}

Hence, we must have $\vartheta_\infty[\gamma_{\eta}] \geq t$ for all $\eta \in [\eta_0,\infty)$, and \eqref{inequality11} becomes
\begin{equation}\label{inequality11b}
  w_{2,\eta}(t,x) \geq   \int_0^{t}L_2({t-s},\gamma_\eta(s),\dot\gamma_\eta(s))\,ds+  \eta \zeta(\gamma_\eta(t))-\delta.
\end{equation}

which implies the boundedness of $\{\gamma_\eta\}$ in $H^1([0,t])$. Indeed, by  \eqref{conreadition} and \eqref{inequality11b}
$$
\int_0^t \frac{|\dot{\gamma}_\eta|^2}{4}\,ds - 2t \leq \int_0^{t} L_2(t-s,\gamma_\eta, \dot{\gamma}_\eta)\,ds \leq w_{2,\eta}(t,x) + \delta \leq w_2(t,x) - 2\delta
$$
is independent of $\eta \geq \eta_0$. Then we obtain the boundedness of $\int_0^t |\gamma_\eta|^2\,ds$  by  
\begin{align*}
 \int_0^t |\gamma_\eta|^2ds&\leq 2\int_0^t |\gamma_\eta(s)-\gamma_\eta(0)|^2ds+2\int_0^t|\gamma_\eta(0)|^2ds\\
 &\leq  2\int_0^t \left[s \int_0^s|\dot{\gamma}_\eta(\hat{s})|^2d\hat{s}\right]ds+2x^2t\\
 &\leq 2Ct^2+2x^2t,
 \end{align*} 
 where we used $\gamma_\eta(0) =x$ and $\int_0^t |\dot{\gamma}_\eta|^2ds\leq C$ for some $C$ independent of $\eta$. Hence, $\{\gamma_\eta\}$ is uniformly bounded in $H^1([0,t])$, so that we may
 pass to a subsequence $\eta_n \to \infty$ so that $\gamma_{\eta_n}\rightharpoonup \gamma_\infty$ in $H^1([0,t])$ for some $\gamma_\infty \in \mathbb{X}$ satisfying $\gamma_\infty(0)=x$.
By \eqref{inequality11b}, we thus arrive at $\zeta(\gamma_\infty(t))=0$, so that $\gamma_\infty\in \mathbb{X}^{t,x}$  by \eqref{eq:zeta}.  Using \eqref{inequality10} and \eqref{inequality11b}, we have (using $t \wedge \vartheta_\infty[\gamma_{\eta}] = t$)
  $$\liminf_{n\to\infty} w_{2,\eta_n}(t,x)+2\delta\geq\inf_{\gamma(\cdot) \in \mathbb{X}^{t,x}} \int_0^{t}L_2({t-s},\gamma(s),\dot\gamma(s))\,ds+\delta\geq w_2(t,x),$$
 which contradicts \eqref{conreadition}.
Therefore, $w_{2,\infty} = w_2$ and \eqref{goalwstar} is proved.

Finally, the fact that $w_2 \geq J_2$ follows from definitions \eqref{eq:w1} and \eqref{eq:B3-1'} {by taking the stopping time $\vartheta \equiv \infty$ in \eqref{eq:w1}}.
\end{proof}

\begin{lemma}\label{lem:3-3}
Assume 
\eqref{eq:v_ep} holds for some $c_1 > \tilde{c}_1 \geq   2$.
 Then 
\begin{equation}\label{eq:lem331}
w^*(t,x) \leq w_1(t,x) \quad \text{ for }(t,x) \in (0,\infty)\times\mathbb{R},
\end{equation}
where $w^*$ and $w_1$ are given in \eqref{eq:w_star} and \eqref{eq:w1}, respectively. Furthermore, $w_1(t,x) = \max\{J_1(t,x),0\}$, where $J_1$ is defined in \eqref{eq:B3-1'}, so that
\begin{itemize}
\item[{\rm(a)}]
{
  If $\frac{c_1}{2}\in ( 1, \sqrt{a} + \sqrt{1-a}]$,
  then $c_1>\bar {c}_{\textup{nlp}}$ and
\begin{equation*}
\begin{split}
w_1(t,x) =
\left\{
\begin{array}{ll}
\medskip
\left(\frac{c_1}{2}-\sqrt{a}\right)(x-\bar{c}_{\textup{nlp}}t), & \text{for}~\bar{c}_{\textup{nlp}}<\frac{x}{t}\leq  c_1,\\
0 ,&  \text{for}~\frac{x}{t}\leq \bar{c}_{\textup{nlp}};
\end{array}
\right.
\end{split}
\end{equation*}}
\item[{\rm(b)}] If $\frac{c_1}{2} \in ( \sqrt{a} + \sqrt{1-a},\infty)$, 
then
\begin{equation*}
\begin{split}
w_1(t,x) =
\left\{
\begin{array}{ll}
\medskip
\left(\frac{c_1}{2}-\sqrt{a}\right)(x-\bar{c}_{\textup{nlp}}t), & \text{for}~c_1-2\sqrt{a}<\frac{x}{t}\leq  c_1,\\
\medskip
\frac{t}{4}(\frac{x^2}{t^2}-4(1-a)), &\text{for}~ 2\sqrt{1-a}< \frac{x}{t}\leq c_1-2\sqrt{a},\\
0, & \text{for}~\frac{x}{t}\leq 2\sqrt{1-a},
\end{array}
\right.
\end{split}
\end{equation*}
where $\bar{c}_{\textup{nlp}}=\frac{c_1}{2}-\sqrt{a}+\frac{1-a}{\frac{c_1}{2}-\sqrt{a}}$.
\end{itemize}

\end{lemma}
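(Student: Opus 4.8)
\emph{Overview.} The statement has three ingredients: the inequality $w^*\le w_1$; the identity $w_1=\max\{J_1,0\}$; and the explicit piecewise formulas in (a)--(b). I would prove them in that order. Only the first is analytically substantive; the second is an immediate consequence of the control representation together with Proposition \ref{prop:B2}(b), and the third is bookkeeping with Proposition \ref{prop:B2}(a).

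\emph{Step 1: $w^*\le w_1$.} By Lemma \ref{lem:3-2}(i), $w^*$ is an upper semicontinuous viscosity sub-solution of the obstacle Hamilton--Jacobi equation \eqref{eq:3-3} with Hamiltonian $H_1$ and the singular initial datum ($0$ on $(-\infty,0]$, $+\infty$ on $(0,\infty)$). On the other hand, the formula \eqref{eq:w1} defining $w_1$ is exactly the value function of the deterministic optimal-stopping problem associated with \eqref{eq:3-3}, and by the theory of \cite[Appendix D]{Evans_1989} (going back to \cite{Freidlin_1985}) this value function is the \emph{unique} viscosity solution of \eqref{eq:3-3}; in particular it is a viscosity super-solution. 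I would then deduce $w^*\le w_1$ from the comparison principle for \eqref{eq:3-3}. The delicate point is the initial trace: for $x\le0$ one has $w^*(0,x)=0=w_1(0,x)$, while for $x>0$ both functions blow up as $t\to0^+$. To show that $\sup(w^*-w_1)$ cannot be a positive number one argues as usual: at an interior positive maximum, $w^*>0$, so the $\min$ in \eqref{eq:3-3} reduces to the first-order part for both functions, and the doubling-of-variables argument (using that $w_1$ is locally Lipschitz for $t>0$) yields a contradiction; and the supremum cannot be approached as $t\to0^+$, because near $\{0\}\times(-\infty,0)$ both functions vanish, while near $\{0\}\times(0,\infty)$ one has $x/t\to\infty$, hence $w_1(t,x)=\frac{t}{4}\big(\frac{x^2}{t^2}-4\big)$ by Proposition \ref{prop:B2}(a), and this matches, to leading order as $t\to0^+$, the large-deviation upper bound for $w^*$ coming from the leading-edge behavior of the KPP solution $u$. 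This mechanism is precisely the one encoded in \cite[Theorems B.1 and D.1]{Evans_1989}, and it parallels the comparison carried out in the proof of Lemma \ref{lem:3-4a}.

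\emph{Step 2: $w_1=\max\{J_1,0\}$.} From \eqref{eq:w1}, the constant stopping times $\vartheta\equiv\infty$ and $\vartheta\equiv0$ give $w_1\ge J_1$ and $w_1\ge0$, so $w_1\ge\max\{J_1,0\}$. For the reverse I would use Freidlin's condition, verified in Proposition \ref{prop:B2}(b): with $P=\{J_1>0\}$, for $(t,x)\in\partial P$ the value $J_1(t,x)$ is realized by a path reaching the zero-set of the initial datum without leaving $P$. This is exactly the hypothesis under which Freidlin's lemma (see \cite{Freidlin_1985}, \cite{Evans_1989}) identifies the optimal-stopping value function with the nonnegative part of the unconstrained value function; together with the uniqueness from Step 1 this gives $w_1=\max\{J_1,0\}$.

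\emph{Step 3 and main obstacle.} For the explicit formulas I would insert the four-piece expression for $J_1$ from Proposition \ref{prop:B2}(a) into $w_1=\max\{J_1,0\}$ and read off the sign of $J_1$ piece by piece. Setting $\alpha:=\frac{c_1}{2}-\sqrt a>0$ (positive as $c_1/2>1>\sqrt a$), so $\bar{c}_{\textup{nlp}}=\alpha+\frac{1-a}{\alpha}$, the needed comparisons are elementary: $\bar{c}_{\textup{nlp}}<c_1$ is equivalent to $c_1^2/4>1$, which holds (this is the asserted $c_1>\bar{c}_{\textup{nlp}}$); $\bar{c}_{\textup{nlp}}<c_1-2\sqrt a$ is equivalent to $\alpha>\sqrt{1-a}$, i.e.\ to $c_1/2>\sqrt a+\sqrt{1-a}$; and by AM--GM $\bar{c}_{\textup{nlp}}\ge2\sqrt{1-a}$, with equality iff $\alpha=\sqrt{1-a}$. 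In case (a) ($\alpha\le\sqrt{1-a}$) these give $c_1-2\sqrt a\le\bar{c}_{\textup{nlp}}<c_1$ and $c_1-2\sqrt a\le2\sqrt{1-a}$, so within $\{x/t<c_1\}$ one has $J_1>0$ exactly on $\{\bar{c}_{\textup{nlp}}<x/t<c_1\}$, where $J_1=\alpha(x-\bar{c}_{\textup{nlp}}t)$; in case (b) ($\alpha>\sqrt{1-a}$) one has $2\sqrt{1-a}<c_1-2\sqrt a$ and $\bar{c}_{\textup{nlp}}<c_1-2\sqrt a$, so $J_1>0$ exactly on $\{x/t>2\sqrt{1-a}\}$, with $w_1$ linear on $\{c_1-2\sqrt a<x/t\le c_1\}$ and quadratic on $\{2\sqrt{1-a}<x/t\le c_1-2\sqrt a\}$; a one-line check gives continuity across $x/t=c_1-2\sqrt a$ (both branches equal $t(\alpha^2-(1-a))$) and across the boundaries of $P$ (where $w_1=0$). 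I expect the genuine difficulty to lie entirely in Step 1 and, within it, in controlling $w^*-w_1$ in the boundary layer near $\{0\}\times(0,\infty)$, where both functions are infinite: the comparison survives only because the blow-up of $w^*$ (governed by the classical large-deviation estimate for the KPP leading edge) agrees to leading order with that of $w_1$ (the quadratic branch of $J_1$), and invoking the comparison principle of \cite{Evans_1989} is the cleanest way to package this matching.
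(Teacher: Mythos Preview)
Your Steps 2 and 3 are essentially what the paper does: it invokes Freidlin's condition (Proposition \ref{prop:B2}(b)) together with \cite[Theorem 5.1]{Evans_1989} to get $w_1=\max\{J_1,0\}$, and then reads off the formulas from Proposition \ref{prop:B2}(a) after checking $c_1>\bar c_{\textup{nlp}}$ via the identity $c_1-\bar c_{\textup{nlp}}=(c_1^2-4)/(2c_1-4\sqrt a)$. Your Step 3 is in fact more detailed than the paper's.

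In Step 1, however, your mechanism for the singular initial layer is not the one the paper uses, and as written it has a gap. You propose to compare $w^*$ and $w_1$ directly and to control the boundary layer near $\{0\}\times(0,\infty)$ by arguing that the blow-up of $w^*$ ``matches to leading order'' the quadratic branch of $w_1$. But the comparison principle \cite[Theorem B.1]{Evans_1989} needs one of the two functions to be finite at the initial time, and at $t=0$ both $w^*$ and $w_1$ equal $+\infty$ on $(0,\infty)$; an $\infty\le\infty$ initial ordering carries no information, and a heuristic rate-matching does not substitute for it. The paper sidesteps this entirely by a two-parameter regularization: for $\sigma,\rho>0$ it sets $\Lambda_\rho=\sup_{x<-\sigma}w^*(\rho,x)$ (finite by Lemma \ref{lem:3-1}, with $\Lambda_\rho\to0$ as $\rho\to0$) and introduces the auxiliary value function $w_1^{\sigma,\rho}$ solving \eqref{eq:3-3} on $(\rho,\infty)\times\mathbb{R}$ with initial datum $\Lambda_\rho$ on $(-\infty,-\sigma)$ and $+\infty$ elsewhere. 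Now the comparison is applied at time $\rho>0$, where $w^*(\rho,\cdot)$ is \emph{everywhere finite} (Lemma \ref{lem:3-1}) while $w_1^{\sigma,\rho}(\rho,\cdot)$ is either $\ge w^*(\rho,\cdot)$ or $+\infty$; this is exactly the setting of \cite[Theorem B.1]{Evans_1989}. One then lets $\rho\to0$ and $\sigma\to0$ in the representation formula for $w_1^{\sigma,\rho}$ to recover $w_1$. This is the precise analogue of the $\eta$-regularization in Lemma \ref{lem:3-4a} that you cite, and it is what makes your Step 1 rigorous; the ``blow-up matching'' you describe is neither needed nor, by itself, sufficient.
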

\begin{proof}
First,  we follow the strategy in \cite[Lemma 3.1]{Evans_1989} to show
\begin{equation}\label{eq:wls}
w^* \leq w_1 \quad \text{ in }  (0,\infty)\times\mathbb{R}.
\end{equation}

For each $\sigma\geq 0$, we  define $G_\sigma=(-\infty,-\sigma)$ and write
$$\Lambda_\rho\equiv \sup_{x\in G_\sigma}w^*(\rho,x)\quad\text{ for }\,\, \rho>0.$$
By Remark \ref{rmk:Qdelta}, $\Lambda_\rho <\infty$ for each $\rho >0$ and $w^*(0,x)=0$ for $x\in G_0$.
Since $w^*$ is upper semicontinuous, it follows that
\begin{equation}\label{Lrho}
\lim_{\rho\to 0}\Lambda_\rho=0 \quad \text{ for each }\sigma\geq 0.
\end{equation}
Choose some small $\rho>0$ and define function $w^{\sigma,\rho}_1: (\rho,\infty)\times\mathbb{R}$ by
\begin{equation}\label{eq:wsigmarho}
\begin{split}
w^{\sigma,\rho}_1(t,x) = \sup_{\vartheta \in \Theta} \inf_{\gamma(\cdot) \in \mathbb{X}} &\left\{ \int_0^{(t-\rho) \wedge \vartheta[\gamma(\cdot)]}L_1(t-s-\rho,\gamma(s),\dot\gamma(s))\,ds\right. \\
&\left.+ \Lambda_\rho \chi_{\{\vartheta[\gamma(\cdot)] \geq (t-\rho)\}} \Bigg|\gamma(0)=x \text{ and } \gamma(t-\rho)\in G_\sigma\right\}. 
\end{split}
\end{equation}
Then, by \cite[Theorem D.1]{Evans_1989}, $w^{\sigma,\rho}_1$ is a viscosity solution of
\begin{equation}\label{eq:3.7'}
\min\{\partial_t w+H_1(t,x,\partial_x w),w\}=0 \quad \mathrm{in}~ (\rho,\infty)\times\mathbb{R},
\end{equation}
 such that
$$w^{\sigma,\rho}_1(\rho,x) =g^{\sigma,\rho}(x):=\left\{\begin{array}{ll}\Lambda_\rho, &\text{ for }x\in  G_\sigma,\\
 \infty, &\text{ for } x\in \mathbb{R}\setminus G_\sigma.
\end{array}
\right.
$$

Note that $w^*(\rho,x) \leq w_1^{\sigma,\rho}(\rho,x)$ and $w^*(\rho,x) <\infty$ for all $x \in \mathbb{R}$, 
and  $w_1^{\sigma,\rho}$ and $w^*$ are respectively viscosity super- and sub-solutions of
\begin{equation*}
\min\{\partial_t w+H_1(t,x,\partial_x w),w\}=0 \quad  \mathrm{in}~ (\rho,\infty)\times\mathbb{R}.\end{equation*}
We may once again deduce by comparison \cite[Theorem B.1]{Evans_1989} that 
$$w^*\leq w_1^{\sigma,\rho} \quad\text{ in } (\rho,\infty)\times \mathbb{R}.$$
Let $\rho\to 0$  in \eqref{eq:wsigmarho} to discover $w^*\leq w_1^\sigma$,
where \begin{equation*}
\begin{split}
&w^{\sigma}_1(t,x)  \\
&= \sup_{\vartheta \in \Theta} \inf_{\gamma(\cdot) \in \mathbb{X}}\left\{ \int_0^{t \wedge \vartheta[\gamma(\cdot)]} L_1(t-s,\gamma(s),\dot\gamma(s))\,ds \,\Bigg|\,\gamma(0)=x \text{ and } \gamma(t)\in G_\sigma\right\}.
\end{split}
\end{equation*}
Letting $\sigma \to 0$ gives
$$
w_1^\sigma \to w_1 \quad\quad \text{ in } (0,\infty)\times \mathbb{R},
$$
and we arrive at \eqref{eq:wls}.

\indent It remains to show that $w_1 = \max\{J_1,0\}$. It follows from \eqref{eq:w1} that $w_1$ defines a locally Lipschitz viscosity solution of \eqref{eq:3-3}  (see \cite[Theorem 5.2]{Evans_1984} and  \cite[Theorem D.2]{Evans_1989}). Moreover, since $J_1$ verifies the Freidlin's condition \eqref{eq:freidlin} (see Proposition \ref{prop:B2}(b)),
we deduce $w_1(t,x) = \max\{J_1(t,x),0\}$ from \cite[Theorem 1]{Freidlin_1985} or \cite[Theorem 5.1]{Evans_1989}. 
This completes the proof of \eqref{eq:lem331}.

Finally, we verify $c_1 >\bar{c}_{\textup{nlp}}$, which implies that the ranges in the statement of the lemma are well-defined and lie within $P=\{(t,x):\, J_1(t,x)>0\}$.  Indeed, this follows from the direct calculation:
 $$c_1-\bar{c}_{\textup{nlp}}=\frac{c_1}{2}+\sqrt{a}-\frac{1-a}{\frac{c_1}{2}-\sqrt{a}}=\frac{c_1^2-4}{2c_1-4\sqrt{a}}>0,$$
as $c_1>2$. 
Hence, the formulas of $w_1$ follow from those of $J_1$ given in \eqref{eq:propA1}.
\end{proof}

\begin{lemma}\label{lem:3-4}
Assume 
\eqref{eq:v_ep} holds for some $c_1 > \tilde{c}_1 \geq 2$.
 Then there exists some $\delta^*>0$ such that
$$w^*=w_*=w_1=w_2\,\quad \text{ in  }\{(t,x): \,x\geq (c_1-\delta^*)t\},$$
where 
$w_1$ and $w_2$ are defined by \eqref{eq:w1}.
\end{lemma}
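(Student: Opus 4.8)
The plan is to obtain, on the region $\{(t,x):\, x\ge (c_1-\delta^*)t\}$, the four–term sandwich
\[
w_1 \;\le\; w_2 \;\le\; w_* \;\le\; w^* \;\le\; w_1,
\]
which forces all four functions to coincide there. Three of these inequalities are already in hand and hold on all of $(0,\infty)\times\mathbb{R}$: the inequality $w_*\ge w_2$ is contained in Lemma~\ref{lem:3-4a}; the inequality $w^*\le w_1$ is \eqref{eq:lem331} of Lemma~\ref{lem:3-3}; and $w_*\le w^*$ is immediate from the definition \eqref{eq:w_star} of the half-relaxed limits as a $\liminf$ and a $\limsup$ of the same family $w^\epsilon$. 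Hence the only thing left to prove is the reverse inequality $w_1\le w_2$ on the stated region.

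For this I would argue purely from the variational formulas. By Lemma~\ref{lem:3-3}, $w_1=\max\{J_1,0\}$. For $w_2$, plugging into \eqref{eq:w1} the two constant stopping times $\vartheta\equiv\infty$ and $\vartheta\equiv0$ (both of which lie in $\Theta$) gives $w_2\ge J_2$ and $w_2\ge 0$ respectively, so $w_2\ge\max\{J_2,0\}$ everywhere. Now invoke Proposition~\ref{prop:B2}(c): there exists $\delta^*>0$ such that $J_1\equiv J_2$ on $\{(t,x):\,x\ge (c_1-\delta^*)t\}$. Taking this same $\delta^*$, we conclude on that region that
\[
w_1 \;=\; \max\{J_1,0\} \;=\; \max\{J_2,0\} \;\le\; w_2,
\]
which closes the chain. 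Since $w^*\le w_1=\max\{J_1,0\}<\infty$ for $t>0$ and $w_*\ge 0$, all four quantities are finite and equal on $\{x\ge(c_1-\delta^*)t\}$, as claimed.

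Within the proof of this lemma there is essentially no obstacle; it is a short chain of inequalities assembled from results already established. The genuine content lies in the two ingredients it uses: Lemma~\ref{lem:3-3}, which identifies $w_1$ with $\max\{J_1,0\}$ via Freidlin's condition, and, above all, Proposition~\ref{prop:B2}(c), whose point is that near the leading front $x=c_1t$ the minimizing trajectories for the action never enter the strip $\{x<\tilde c_1 t\}$ on which the Lagrangians $L_1$ and $L_2$ differ, so that $J_1$ and $J_2$ literally agree there. That computation is the one deferred to the Appendix; once it is granted, the present lemma follows in a few lines.
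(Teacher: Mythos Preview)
Your proof is correct and follows essentially the same approach as the paper: both assemble the chain $J_2\le w_2\le w_*\le w^*\le w_1=\max\{J_1,0\}$ from Lemmas~\ref{lem:3-4a} and~\ref{lem:3-3} and then close it on $\{x\ge(c_1-\delta^*)t\}$ via Proposition~\ref{prop:B2}(c). The only minor difference is that the paper additionally notes $J_1=J_2>0$ on that region (so $\max\{J_1,0\}=J_1=J_2\le w_2$ directly), whereas you argue $w_2\ge\max\{J_2,0\}$ using the stopping times $\vartheta\equiv\infty$ and $\vartheta\equiv0$; both are equivalent here.
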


\begin{proof}
By definitions of $w^*$ and $w_*$ in \eqref{eq:w_star}, 
it is obvious that $w^*\geq w_*$. It remains to prove $w^*\leq w_*$ in $\{(t,x): \,x\geq (c_1-\delta^*)t\}$. By Lemmas \ref{lem:3-4a} and \ref{lem:3-3}, we have
$$
J_2 \leq w_2 \leq w_* \leq w^* \leq w_1 = \max\{J_1,0\}.
$$
By Proposition \ref{prop:B2}(c), there exists $\delta^*>0$ such that $J_1 = J_2>0$ in $\{(t,x): x \geq (c_1 - \delta^*) t\}$. This yields the desired conclusion.
\end{proof}

\begin{corollary}\label{cor3.9}
Let $\hat{c} = c_1 - \delta$ for some $\delta \in (0,\delta^*]$, then
$$
u(t, \hat{c}t) = \exp(-(\hat\mu + o(1))t)\,\quad \text{ for } t\gg1,
$$
where
$$
\hat\mu = \left(\frac{c_1}{2} - \sqrt{a}\right)(\hat{c} - \overline{c}_{\textup{nlp}}) \quad\text{ and }\quad   \overline{c}_{\textup{nlp}}= \frac{c_1}{2} - \sqrt{a} + \frac{1-a}{\frac{c_1}{2} - \sqrt{a}}.
$$
\end{corollary}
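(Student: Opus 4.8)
The plan is to upgrade the identity $w^* = w_* = w_1$ on the cone $\{(t,x):\, x \ge (c_1-\delta^*)t\}$, obtained in Lemma~\ref{lem:3-4}, into a pointwise limit for $w^\epsilon$ at a single well-chosen point, and then to unwind the definition of $w^\epsilon$ into the claimed exponential decay of $u$.

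First I would, if necessary, shrink the constant $\delta^*$ provided by Lemma~\ref{lem:3-4} (its conclusion is only strengthened under shrinking) so as to guarantee $c_1-\delta^* > \bar{c}_{\textup{nlp}}$ in case (a) of Lemma~\ref{lem:3-3}, and additionally $\delta^* < 2\sqrt a$ in case (b); this is legitimate since $c_1 > \bar{c}_{\textup{nlp}}$ was established in Lemma~\ref{lem:3-3}. Then, for $\hat c = c_1 - \delta$ with $\delta \in (0,\delta^*]$, one has $\hat c \ge c_1 - \delta^*$, so $(1,\hat c)$ lies in the cone $\{x \ge (c_1-\delta^*)t\}$ and Lemma~\ref{lem:3-4} gives $w_*(1,\hat c) = w^*(1,\hat c) = w_1(1,\hat c)$. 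On the other hand, directly from the definitions \eqref{eq:w_star} of the half-relaxed limits one has
$$
w_*(1,\hat c) \le \liminf_{\epsilon \to 0} w^\epsilon(1,\hat c) \le \limsup_{\epsilon \to 0} w^\epsilon(1,\hat c) \le w^*(1,\hat c),
$$
so the squeeze forces $w^\epsilon(1,\hat c) \to w_1(1,\hat c)$ as $\epsilon \to 0$. Recalling $w^\epsilon(t,x) = -\epsilon\log u(t/\epsilon, x/\epsilon)$ from \eqref{eq:w} and setting $\epsilon = 1/t$, this says precisely $-\tfrac1t \log u(t,\hat c t) \to w_1(1,\hat c)$, i.e. $u(t,\hat c t) = \exp(-(w_1(1,\hat c)+o(1))t)$ as $t \to \infty$.

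It then remains to evaluate $w_1(1,\hat c)$ using the explicit formulas of Lemma~\ref{lem:3-3}: in case (a) the inequalities $\bar{c}_{\textup{nlp}} < \hat c \le c_1$, and in case (b) the inequalities $c_1 - 2\sqrt a < \hat c \le c_1$ (the left one because $\delta \le \delta^* < 2\sqrt a$), place $(1,\hat c)$ in the top branch of the corresponding piecewise formula, where $w_1(1,\hat c) = (\tfrac{c_1}{2}-\sqrt a)(\hat c - \bar{c}_{\textup{nlp}}) = \hat\mu$. This yields the corollary. The only point requiring care --- not really an obstacle --- is this bookkeeping of which branch of the formula for $w_1$ is active at the evaluation point, which is exactly what dictates the admissible range $\delta \in (0,\delta^*]$ after the harmless shrinking above; everything else is a direct unwinding of definitions and needs no new estimates.
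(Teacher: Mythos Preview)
Your proof is correct and follows essentially the same approach as the paper's own proof, which also unwinds Lemma~\ref{lem:3-4} into $w^\epsilon(1,\hat c)\to w_1(1,\hat c)$ and then reads off $\hat\mu=w_1(1,\hat c)$ from Lemma~\ref{lem:3-3}. Your explicit bookkeeping on shrinking $\delta^*$ so that $(1,\hat c)$ lands in the top branch of the piecewise formula is a point the paper leaves implicit.
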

\begin{proof}
In view of $\delta \in (0,\delta^*]$, it follows from Lemma \ref{lem:3-4}
that for $0 < \epsilon \ll 1$,
\begin{equation*}
-\epsilon\log{u\left(\frac{1}{\epsilon},\frac{\hat{c}}{\epsilon}\right)}= \hat\mu + o(1) \,\, \Longleftrightarrow \,\, u\left(\frac{1}{\epsilon},\frac{\hat{c}}{\epsilon}\right) = \exp\left( -\frac{\hat\mu + o(1)}{\epsilon}\right)   \end{equation*}
where
\begin{equation*}
\hat\mu = w_1( 1, \hat{c}) = \left(\frac{c_1}{2}-\sqrt{a}\right)(\hat{c}-\bar{c}_{\textup{nlp}}), 
\end{equation*}
by Lemma \ref{lem:3-3}. The proof is complete.
\end{proof}

\section{Estimating $\underline{c}_2$ and $\overline{c}_2$}\label{sec:4}
In this section, we apply results in Section \ref{S3} with $c_1=2\sqrt{dr}$ and $\tilde{c}_1=2$
to determine the spreading speeds $\underline c_2$ and $\overline{c}_2$.

\begin{proposition}\label{thm:3.6}
Assume $\rm{(H_{\infty})}$ and $ dr>1$. Then $$\underline{c}_2 \geq {c}_{\textup{nlp}},$$ where ${c}_{\textup{nlp}}$ is given in \eqref{eq:c_acc-1} in the statement of Theorem \ref{thm:1-1}.

\end{proposition}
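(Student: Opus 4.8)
The plan is to establish the lower bound $\underline{c}_2 \geq c_{\textup{nlp}}$ by combining the geometric optics estimates of Section \ref{S3} with the large deviation machinery of \cite[Section 4]{Evans_1989}. The starting point is the observation from the Remark following \eqref{eq:v_ep} that, under the hypotheses $dr>1$ and $\mathrm{(H_\infty)}$, Proposition \ref{prop:1} guarantees that \eqref{eq:v_ep} holds with $\tilde{c}_1 = 2$ and $c_1 = 2\sqrt{dr}$; in particular all the results of Section \ref{S3} apply. The first step is to identify $c_{\textup{nlp}}$ with $\overline{c}_{\textup{nlp}}$ (evaluated at $c_1 = 2\sqrt{dr}$) whenever $\sqrt{dr} \leq \sqrt{a} + \sqrt{1-a}$, and with $2\sqrt{1-a}$ otherwise; in either case Lemma \ref{lem:3-3} (cases (a) and (b) respectively) yields the explicit formula for $w_1$, and in particular the key identity
\begin{equation}\label{eq:zeroset}
\{(t,x):\, w_1(t,x) = 0\} = \{(t,x):\, t>0 \text{ and } x \leq c_{\textup{nlp}} t\}.
\end{equation}

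The second step is to transfer this to a lower bound on $u$. By \eqref{eq:upperlimitw}, that is, by Lemmas \ref{lem:3-4a} and \ref{lem:3-3}, we have $0 \leq w_* \leq w^* \leq w_1$; combined with \eqref{eq:zeroset}, this gives $w^*(t,x) = 0$, hence $w^\epsilon(t,x) \to 0$ locally uniformly, on the open cone $\{(t,x):\, t>0,\ x < c_{\textup{nlp}} t\}$. Since $w^\epsilon = -\epsilon \log u^\epsilon$, this says precisely that $u^\epsilon(t,x) = u(t/\epsilon, x/\epsilon)$ does not decay exponentially on this cone. The third step is to upgrade this ``no exponential decay'' statement to a genuine positivity statement $\liminf_{\epsilon \to 0} u(t/\epsilon, x/\epsilon) > 0$ on the same cone. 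This is exactly the type of argument carried out in \cite[Section 4]{Evans_1989}: one compares $u^\epsilon$ from below with a solution of a suitable KPP-type equation (here $\partial_t u = \epsilon \partial_{xx} u + \frac{1}{\epsilon} u(1 - u - a v^\epsilon)$, using the crude bound $v^\epsilon \leq 1$ only where needed and the sharper information $v^\epsilon \to 0$ behind the front $x = 2\sqrt{dr}\,t$ where available), and invokes the fact that a subsolution which stays bounded away from $0$ on the parabolic boundary of a region and does not decay exponentially in the interior must in fact be bounded below by a positive constant there. Rescaling back, $u(t,x)$ is bounded below by a positive constant on $\{x < (c_{\textup{nlp}} - \eta)t,\ t \gg 1\}$ for every small $\eta>0$, which by the definition of $\underline{c}_2$ gives $\underline{c}_2 \geq c_{\textup{nlp}} - \eta$; letting $\eta \to 0$ completes the proof.

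The step I expect to be the main obstacle is the third one: passing from the viscosity-solution statement $w^* = 0$ (equivalently, no exponential decay of $u^\epsilon$) to the strict positivity $\liminf u^\epsilon > 0$. The subtlety is that the reaction term $u(1 - u - av)$ in \eqref{eq:1-1} is not of pure KPP type because of the $-av$ coupling, so one cannot directly quote the single-species result; one must control $v$ on the relevant region. On the cone $\{x < c_{\textup{nlp}} t\}$ the front of $v$ at speed $c_1 = 2\sqrt{dr} > c_{\textup{nlp}}$ lies strictly ahead, so one might hope $v$ is close to $1$ there, which would only hurt $u$; the saving grace is that $c_{\textup{nlp}}$ was engineered (via the Hamiltonian $H_2$ with the indicator $\chi_{\{\tilde{c}_1 t < x < c_1 t\}}$, i.e., via the ``$1 - a$'' effective growth rate behind $x = c_1 t$) precisely so that $u$ can still invade at this speed despite the presence of $v$. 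Concretely, one works with the equation $\partial_t u \geq \epsilon \partial_{xx} u + \frac{1}{\epsilon} u(1 - a - u)$ valid wherever $v^\epsilon \leq 1$, whose traveling speed is $2\sqrt{1-a}$, together with the borrowed information that $u^\epsilon$ does not decay exponentially (encoded by $w^* = 0$) to push the effective invasion front all the way out to $c_{\textup{nlp}} > 2\sqrt{1-a}$. Making this comparison rigorous — choosing the comparison domain, checking the boundary data using Proposition \ref{prop:1} (which supplies $(u,v) \to (k_1,k_2)$ near $x = 0$), and quoting the precise lemma from \cite[Section 4]{Evans_1989} — is the technical heart of the argument; the rest is bookkeeping with the explicit formulas already recorded in Lemma \ref{lem:3-3}.
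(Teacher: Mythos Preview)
Your first two steps match the paper. Your third step, however, misidentifies what the argument in \cite[Section 4]{Evans_1989} actually is: it is not a parabolic comparison with KPP subsolutions and boundary-data checking, but a direct viscosity test-function trick, which the paper reproduces verbatim. Fix compact sets $K \subset \mathrm{Int}\,K' \subset K' \subset \{w_1 = 0\}$; since $0 \leq w_* \leq w^* \leq w_1 = 0$ there, $w^\epsilon \to 0$ uniformly on $K'$. For $(t_0,x_0)\in K$ set $\rho(t,x)=|x-x_0|^2+(t-t_0)^2$; then $w^\epsilon-\rho$ attains a local maximum at some $(t_\epsilon,x_\epsilon)\to(t_0,x_0)$. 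At that point the PDE \eqref{eq:3-2}, the derivative relations at a maximum, and the crude bound $v^\epsilon\leq 1$ give
\[
o(1)=\partial_t\rho-\epsilon\partial_{xx}\rho+|\partial_x\rho|^2 \leq u^\epsilon+av^\epsilon-1 \leq u^\epsilon-(1-a),
\]
so $u^\epsilon(t_\epsilon,x_\epsilon)\geq 1-a+o(1)$; the maximum condition then forces $u^\epsilon(t_0,x_0)\geq u^\epsilon(t_\epsilon,x_\epsilon)$. Taking $K=\{1\}\times[c-1,c]$ for any $c<c_{\textup{nlp}}$ yields $\underline{c}_2\geq c$. No comparison domain, no boundary data from Proposition~\ref{prop:1}, and no finer information on $v$ are used.

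Your proposed route is both harder and, as written, incomplete. The ``fact'' you invoke --- that a supersolution of a KPP equation with positive boundary data and sub-exponential interior decay must be bounded below --- is not standard: $w^\epsilon\to 0$ says only $u^\epsilon=e^{-o(1/\epsilon)}$, which is compatible with $u^\epsilon\to 0$. A straight comparison with the equation $\partial_t u\geq\partial_{xx}u+u(1-a-u)$ yields at best $\underline{c}_2\geq 2\sqrt{1-a}$, not $c_{\textup{nlp}}$, and you give no concrete mechanism for ``pushing'' the front further using the $w^*=0$ information. The test-function argument is exactly the device that converts the soft statement $w^\epsilon\to 0$ into the hard bound $u^\epsilon\geq 1-a+o(1)$, and it does so in three lines using nothing about $v$ beyond $v\leq 1$.
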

\begin{proof}
By Lemma \ref{lem:3-3},
\begin{equation}\label{eq:caccc}
  {\rm Int}\,\{(t,x):w_1(t,x) = 0\} = \{(t,x): x < {c}_{\textup{nlp}} t\}.
\end{equation}
We claim that it is enough to show that
\begin{equation}\label{eq:KK}
\liminf_{\epsilon \to 0} u^\epsilon(t,x) \geq {1-a} \quad \text{ uniformly on }K,
\end{equation}
for each compact subset $K$ of ${\rm Int}\,\{(t,x):w_1(t,x) = 0\}$. Granted, then for each $c < {c}_{\textup{nlp}}$, choose $K = \{(1,x): \, c-1 \leq x \leq c\}$,
so that
$$
K \subset  \{(t,x): x < {c}_{\textup{nlp}} t\}=  {\rm Int}\,\{(t,x):w_1(t,x) = 0\}.
$$
Then
$$
\liminf_{t \to \infty} \inf_{ct-1 < x < ct} u(t,x) = \liminf_{\epsilon \to 0} \inf_{K } u^\epsilon  \geq 1-a >0,
$$
i.e., $\underline{c}_2\geq c$ for all $c<{c}_{\textup{nlp}}$, so that $\underline{c}_2\geq c_{\textup{nlp}}$.

To prove \eqref{eq:KK}, we recall the arguments in \cite[Section 4]{Evans_1989}. Let $K$ and $K'$ be compact subsets so that $K\subset {\rm Int}\, K' \subset K' \subset {\rm Int}\,\{(t,x):w_1(t,x) = 0\}$. By \eqref{eq:lem331} in Lemma \ref{lem:3-3} and {$0\leq w_*\leq  w^*\leq w_1$}, we have $w_*(t,x) =w^*(t,x) = 0$ in {$\{(t,x):w_1(t,x) = 0\}$.}
Hence, we have $w^\epsilon(t,x) \to 0$ uniformly in $K'$. Fix $(t_0,x_0) \in K$ and consider the test function
$$
\rho(t,x) = |x-x_0|^2 + (t-t_0)^2.
$$
Then for all small $\epsilon$, the function $w^\epsilon - \rho$ has a local maximum point $(t_\epsilon,x_\epsilon)$ such that  $(t_\epsilon,x_\epsilon)\to (t_0,x_0)$ as $\epsilon \to 0$. Furthermore, $\partial_t \rho(t_\epsilon,x_\epsilon), \partial_x \rho(t_\epsilon,x_\epsilon) \to 0$, so that at the point $(t_\epsilon,x_\epsilon)$,
$$
o(1) = \partial_t \rho - \epsilon \partial_{xx} \rho + |\partial_x\rho|^2 \leq \partial_t w^\epsilon - \epsilon \partial_{xx} w^\epsilon + |\partial_x w^\epsilon|^2 \leq u^\epsilon - 1 + a,
$$
where the second inequality is due to $v^\epsilon \leq 1$. This yields
$$
u^\epsilon(t_\epsilon, x_\epsilon) \geq 1-a + o(1).
$$
In light of $w^\epsilon(t_\epsilon,x_\epsilon)  \geq (w^\epsilon - \rho)(t_\epsilon,x_\epsilon) \geq(w^\epsilon - \rho)(t_0,x_0) = w^\epsilon(t_0,x_0)$, we have
$$
-\epsilon \log u^\epsilon(t_\epsilon,x_\epsilon)= w^\epsilon(t_\epsilon,x_\epsilon)\geq w^\epsilon(t_0,x_0) =-\epsilon \log u^\epsilon(t_0,x_0),
$$
so that $u^\epsilon(t_0,x_0) \geq u^\epsilon(t_\epsilon,x_\epsilon) \geq 1-a + o(1).$ Since this argument is uniform for $(t_0,x_0) \in K$, we deduce \eqref{eq:KK}.  This completes the proof.
\end{proof}

\begin{proposition}\label{thm:3-7}
Under the assumption $\rm{(H_{\infty})}$ and $dr>1$, we have
$$\overline{c}_2 \leq  \max\{{c}_{\textup{LLW}},{c}_{\textup{nlp}}\},$$
where ${c}_{\textup{LLW}}$  is given by Theorem \ref{thm:LLW}  and  ${c}_{\textup{nlp}}$ is defined by \eqref{eq:c_acc-1}.
\end{proposition}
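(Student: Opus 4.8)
The plan is to combine the exponential decay estimate of Corollary~\ref{cor3.9} with the comparison result Lemma~\ref{lem:appen1}(a). First I would record that, under $\rm{(H_\infty)}$ and $dr>1$, condition \eqref{eq:v_ep} holds with $\tilde c_1=2$ and $c_1=2\sqrt{dr}$, so the whole apparatus of Section~\ref{S3} applies. Abbreviate $\kappa:=\tfrac{c_1}{2}-\sqrt a=\sqrt{dr}-\sqrt a>0$ and $\bar c_{\textup{nlp}}=\kappa+\tfrac{1-a}{\kappa}$, and recall $c_1>2$ and $c_1>\bar c_{\textup{nlp}}$ from Lemma~\ref{lem:3-3}. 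I would then fix $\delta>0$ with $\delta\le\min\{\delta^*,\,c_1-2,\,c_1-\bar c_{\textup{nlp}},\,2\sqrt a\}$, set $\hat c:=c_1-\delta$ and $\hat\mu:=\kappa(\hat c-\bar c_{\textup{nlp}})>0$, so that $2<\hat c<c_1$ and $\hat c>\overline c_2$ (by Proposition~\ref{prop:1}(ii), $\overline c_2\le 2$). I would apply Lemma~\ref{lem:appen1}(a) to $(u,v)$ restricted to the sector $\{0\le x\le\hat c t\}$: hypothesis (i) follows from Proposition~\ref{prop:1}(iii), since $0$ lies strictly between $(c_3+\eta)t$ and $(\underline c_2-\eta)t$ for all $t>0$ and small $\eta$ (as $c_3<0<c_{\textup{LLW}}\le\underline c_2$), giving $(u,v)(t,0)\to(k_1,k_2)$ via \eqref{eq:spreadinglyprop_c}, while $\overline c_2<\hat c<c_1$ lets one pick $\eta$ with $\overline c_2+\eta<\hat c<c_1-\eta$, giving $(u,v)(t,\hat ct)\to(0,1)$ via \eqref{eq:spreadinglyprop_b}; hypothesis (ii) is exactly Corollary~\ref{cor3.9}, which gives $u(t,\hat ct)=\exp(-(\hat\mu+o(1))t)$ and hence $e^{\mu t}u(t,\hat ct)\to0$ for every $\mu<\hat\mu$. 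Lemma~\ref{lem:appen1}(a) then yields $\lim_{t\to\infty}\sup_{ct<x\le\hat ct}u(t,x)=0$ for all $c>c_{\hat c,\hat\mu}$, and since also $\sup_{x>\hat ct}u\to0$ (because $\hat c>\overline c_2$), this gives $\overline c_2\le c_{\hat c,\hat\mu}$.

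The remaining and main step is to show $c_{\hat c,\hat\mu}\le\max\{c_{\textup{LLW}},c_{\textup{nlp}}\}$. The crucial algebraic observation is that the exponent $\hat\mu$ delivered by the geometric-optics analysis is tuned precisely so that the quantity $\mu_1:=\tfrac12\big(\hat c-\sqrt{\hat c^2-4(\hat\mu+1-a)}\big)$ entering the formula for $c_{\hat c,\hat\mu}$ equals $\kappa$. Indeed, from $\hat\mu=\kappa\hat c-\kappa^2-(1-a)$ one gets $\hat c^2-4(\hat\mu+1-a)=(\hat c-2\kappa)^2$, and since $\hat c-2\kappa=2\sqrt a-\delta>0$ we obtain $\sqrt{\hat c^2-4(\hat\mu+1-a)}=\hat c-2\kappa$, so $\mu_1=\kappa$. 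By the same computation at the threshold value $\hat\mu=\lambda_{\textup{LLW}}(\hat c-c_{\textup{LLW}})$, using $c_{\textup{LLW}}=\lambda_{\textup{LLW}}+\tfrac{1-a}{\lambda_{\textup{LLW}}}$ (immediate from \eqref{eq:LLL} since $c_{\textup{LLW}}\ge2\sqrt{1-a}$), one finds $\mu_1=\lambda_{\textup{LLW}}$ there; as $\mu_1$ is strictly increasing in $\hat\mu$ for fixed $\hat c$, the first branch of $c_{\hat c,\hat\mu}$ is in force exactly when $\kappa=\mu_1\ge\lambda_{\textup{LLW}}$, independently of $\hat c$.

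I would then split cases on the sign of $\kappa-\lambda_{\textup{LLW}}$. If $\kappa\ge\lambda_{\textup{LLW}}$, then $c_{\hat c,\hat\mu}=c_{\textup{LLW}}\le\max\{c_{\textup{LLW}},c_{\textup{nlp}}\}$. If $\kappa<\lambda_{\textup{LLW}}$, then $\kappa<\lambda_{\textup{LLW}}\le\sqrt{1-a}$, i.e.\ $\sqrt{dr}<\sqrt a+\sqrt{1-a}$, so by \eqref{eq:c_acc-1} one has $c_{\textup{nlp}}=\bar c_{\textup{nlp}}$; moreover the second branch applies and $c_{\hat c,\hat\mu}=\hat c-\tfrac{2\hat\mu}{\hat c-(\hat c-2\kappa)}=\hat c-\tfrac{\hat\mu}{\kappa}=\hat c-(\hat c-\bar c_{\textup{nlp}})=\bar c_{\textup{nlp}}=c_{\textup{nlp}}$. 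In either case $\overline c_2\le c_{\hat c,\hat\mu}\le\max\{c_{\textup{LLW}},c_{\textup{nlp}}\}$, which is the claim.

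I expect the main obstacle to be precisely this bookkeeping at the end: recognizing via the identity $\hat\mu+1-a=\kappa(\hat c-\kappa)$ that the large-deviation exponent is exactly the one forcing $\mu_1=\kappa$, so that $c_{\hat c,\hat\mu}$ collapses to either $c_{\textup{LLW}}$ or $\bar c_{\textup{nlp}}$, together with the slightly delicate observation that the ``nonlocally pulled'' branch of Lemma~\ref{lem:appen1}(a) is active only in the regime $\sqrt{dr}\le\sqrt a+\sqrt{1-a}$ where $c_{\textup{nlp}}=\bar c_{\textup{nlp}}$ (otherwise the bound $\overline c_2\le\bar c_{\textup{nlp}}$ would fall short of the asserted $\max\{c_{\textup{LLW}},c_{\textup{nlp}}\}$). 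Everything else reduces to verifying the hypotheses of earlier results and elementary algebra; combined with Proposition~\ref{thm:3.6} and Proposition~\ref{prop:1}(ii), this pins down $\underline c_2=\overline c_2=\max\{c_{\textup{LLW}},c_{\textup{nlp}}\}$ and completes the determination of $c_2$ in Theorem~\ref{thm:1-1}.
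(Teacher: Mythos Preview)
Your proposal is correct and follows essentially the same route as the paper's own proof: invoke Corollary~\ref{cor3.9} to obtain the exponent $\hat\mu=\kappa(\hat c-\bar c_{\textup{nlp}})$, verify the boundary hypotheses of Lemma~\ref{lem:appen1}(a) via Proposition~\ref{prop:1}, and then use the algebraic identity $\hat c^2-4(\hat\mu+1-a)=(\hat c-2\kappa)^2$ to reduce $c_{\hat c,\hat\mu}$ to either $c_{\textup{LLW}}$ or $\bar c_{\textup{nlp}}$ according to whether $\kappa\ge\lambda_{\textup{LLW}}$ or $\kappa<\lambda_{\textup{LLW}}$. Your extra care in requiring $\delta<2\sqrt a$ (to secure the sign of $\hat c-2\kappa$) and in explicitly combining the conclusion of Lemma~\ref{lem:appen1}(a) with $\hat c>\overline c_2$ are minor refinements of the same argument.
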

\begin{proof}
Denote $\hat{c} = c_1 - \delta^*$, where $c_1 = 2\sqrt{dr}$ and $\delta^*$ is given by Lemma \ref{lem:3-4}.
It follows from Corollary \ref{cor3.9} 
that
$$
u(t, \hat{c}t) = \exp(-(\hat\mu + o(1))t)\,\quad \text{ for } t\gg1.
$$
Here
\begin{equation}\label{eqeq:mu}
\hat\mu =  \left(\frac{c_1}{2}-\sqrt{a}\right)(\hat{c}-\bar{c}_{\textup{nlp}})  =  \hat{c}\left(\frac{c_1}{2}-\sqrt{a}\right) - \left(\frac{c_1}{2}-\sqrt{a}\right)^2 - (1-a), 
\end{equation}
where we used
\begin{equation}\label{eq:barcnlp}
\bar{c}_{\textup{nlp}}=\frac{c_1}{2}-\sqrt{a}+\frac{1-a}{\frac{c_1}{2}-\sqrt{a}}.
\end{equation}
 We note for later purposes that \eqref{eqeq:mu} and \eqref{eq:barcnlp} are quadratic equations in $\frac{c_1}{2} - \sqrt{a}$, so that
\begin{equation}\label{eq:later}
\frac{c_1}{2} - \sqrt{a} = \frac{\hat{c} - \sqrt{\hat{c}^2 - 4(\hat\mu + 1 - a)}}{2}= \frac{\bar{c}_{\textup{nlp}} - \sqrt{\bar{c}_{\textup{nlp}}^2 - 4(1 - a)}}{2}.
\end{equation}

 Moreover, by Proposition \ref{prop:1}, we arrive at
  $$\lim\limits_{t\to\infty} (u,v)(t,\hat{c} t)=(0,1)\text{ and }\lim\limits_{t\to\infty}(u,v)(t,0)=(k_1,k_2).$$
  We may then apply Lemma \ref{lem:appen1}{\rm{(a)}}  to conclude
\begin{equation}\label{eq:nu1}
\overline{c}_2 \leq  c_{\hat{c},\hat\mu}=\left\{
\begin{array}{ll}
\medskip
{c}_{\textup{LLW}},& \text{ if } \hat\mu\geq \lambda_{\textup{LLW}}(\hat c- {c}_{\textup{LLW}}),\\
\hat c-\frac{2\hat\mu}{\hat c-\sqrt{\hat c^2-4(\hat\mu+1-a)}},&  \text{ if } \hat\mu<\lambda_{\textup{LLW}}(\hat c- {c}_{\textup{LLW}}).\\
\end{array}
\right .
\end{equation}
To complete the proof, we just need to verify $ c_{\hat{c},\hat\mu}= \max\{{c}_{\textup{LLW}}, {c}_{\textup{nlp}}\}$, where
\begin{equation*}
\begin{array}{ll}
{c}_{\textup{nlp}}= \left\{
\begin{array}{ll}
\medskip
\bar {c}_{\textup{nlp}},  & \qquad  \qquad \text{ if }\,\,c_1 \leq 2(\sqrt a +\sqrt{1-a}),\\
2\sqrt{1-a}, & \qquad \qquad \text{ if }\,\,c_1 > (\sqrt a +\sqrt{1-a}).
\end{array}
\right.
\end{array}
\end{equation*}
\begin{itemize}

\item[(i)] For the case $\frac{c_1}{2} - \sqrt{a} < \lambda_{\rm LLW}$, we have $\frac{c_1}{2} - \sqrt{a} < \lambda_{\rm LLW} \leq \sqrt{1-a}$, so that
$$
c_{\rm nlp} =\bar{c}_{\rm nlp} = \frac{c_1}{2} - \sqrt{a} + \frac{1-a}{\frac{c_1}{2} - \sqrt{a}} > \lambda_{\rm LLW} + \frac{1-a}{\lambda_{\rm LLW}} = c_{\rm LLW}.
$$
(Note that $h(s) = s + \frac{1-a}{s}$ is strictly decreasing in $(0,\sqrt{1-a}]$.)
  Hence,
   \begin{equation*}
\begin{array}{l}
\hat\mu=(\frac{c_1}{2}-\sqrt{a})(\hat c-\bar{c}_{\textup{nlp}})  <\lambda_{\textup{LLW}}(\hat c- {c}_{\textup{LLW}})
\end{array}
\end{equation*}
  so that, by \eqref{eq:nu1}, $ c_{\hat{c},\hat\mu}=\hat c- \frac{2\hat\mu}{\hat c-\sqrt{\hat c^2-4(\hat\mu+1-a)}} $. Using
  \eqref{eqeq:mu} and  \eqref{eq:later},
$$
 c_{\hat{c},\hat\mu}= \hat{c} -   \frac{1}{ \frac{c_1}{2} - \sqrt{a}}\left( \frac{c_1}{2} - \sqrt{a}\right)(\hat{c} - \bar{c}_{\textup{nlp}})=\bar{c}_{\textup{nlp}} = \max\{c_{\rm nlp}, c_{\rm LLW}\};
$$
\item[(ii)] For the case $\frac{c_1}{2} - \sqrt{a} \geq \lambda_{\rm LLW}$, we have $c_{\rm nlp} \leq  c_{\rm LLW}$.
By \eqref{eqeq:mu} and the  fact that $c_{\rm LLW} =\lambda_{\rm LLW} + \frac{1-a}{\lambda_{\rm LLW}}$, we derive that
$$\hat\mu = \left( \frac{c_1}{2} - \sqrt{a}\right) \hat{c} -\left( \frac{c_1}{2} - \sqrt{a}\right) ^2-(1-a)\geq\lambda_{\textup{LLW}}\hat c-\lambda_{\textup{LLW}}^2-(1-a)=\lambda_{\textup{LLW}}(\hat c- c_{\textup{LLW}}),$$
where the inequality holds since $\lambda\hat c-\lambda^2-(1-a)$ is an increasing function of $\lambda$
in $(0,\frac{\hat c}{2})$. Thus by  \eqref{eq:nu1}, $ c_{\hat{c},\hat\mu}= {c}_{\textup{LLW}} = \max\{c_{\rm nlp}, c_{\rm LLW}\}$ as desired.
\end{itemize}
This completes the proof of Proposition \ref{thm:3-7}.
\end{proof}

\begin{proof}[Proof of Theorem \ref{thm:1-1}]
Let ${c}_{\textup{nlp}}$ be as given in \eqref{eq:c_acc-1} in the statement of Theorem \ref{thm:1-1}.
By Proposition \ref{prop:1}, it remains to show that $\underline{c}_2 \geq c_{\textup{nlp}}$ and $\overline{c}_2 \leq \max\{{c}_{\textup{LLW}},{c}_{\textup{nlp}}\}$. These are proved in Propositions \ref{thm:3.6} and \ref{thm:3-7} respectively.
\end{proof}

\section{The case $dr=1$ \label{sec:5}}
Here, 
we prove Theorem \ref{thm:1-1'} by applying Theorem \ref{thm:1-1}.
\begin{proof}[Proof of Theorem \ref{thm:1-1'}] 
Let $(u,v)$ be a solution of \eqref{eq:1-1} with initial data $(u_0,v_0)$ satisfying $\rm(H_\infty)$.
For any small $\delta\in(0,1)$, let $(\underline{u}^\delta,\overline{v}^\delta)$ and  $(\overline{u}^\delta,\underline{v}^\delta)$ be respectively the solutions of
\begin{equation}\label{eq:1-1'}
\left\{
\begin{array}{ll}
\medskip
\partial_t u-\partial_{xx}u=u(1-u-av),& \text{ in }(0,\infty)\times \mathbb{R},\\
\partial _t v-d\partial_{xx}v=rv(1+\delta-bu-v),& \text{ in }(0,\infty)\times \mathbb{R},\\
\end{array}
\right .
\end{equation}
and
\begin{equation}\label{eq:1-1''}
\left\{
\begin{array}{ll}
\medskip
\partial_t u-\partial_{xx}u=u(1-u-av),& \text{ in }(0,\infty)\times \mathbb{R},\\
\partial _t v-d\partial_{xx}v=r v(1-\delta-bu-v),& \text{ in }(0,\infty)\times \mathbb{R},\\
\end{array}
\right .
\end{equation}
with the same initial data $(u_0,v_0)$. By comparison, we deduce that
\begin{equation}\label{eq:uvsupersub}
(\underline{u}^\delta,\overline{v}^\delta)\preceq (u,v)\preceq (\overline{u}^\delta,\underline{v}^\delta) \,\, \text{ in } [0,\infty)\times\mathbb{R}.
\end{equation}

Notice that $(\underline{u}^\delta,\overline{v}^\delta)$ is a solution of \eqref{eq:1-1'} if and only if
\begin{equation}\label{scaling1}
  (\underline{U}^\delta,\overline V^\delta)= \left(\underline{u},\frac{\overline{v}^\delta}{1+\delta}\right)
\end{equation}
 is a solution of
 \begin{equation}\label{eq:1-1'trs}
\left\{
\begin{array}{ll}
\medskip
\partial_t U-\partial_{xx}U=U(1-U-\overline{a}^\delta V),& \text{ in }(0,\infty)\times \mathbb{R},\\
\partial _t V-d\partial_{xx}V=\overline{r}^\delta V(1-\underline{b}^\delta U-V),& \text{ in }(0,\infty)\times \mathbb{R},\\
\end{array}
\right .
\end{equation}
where $\overline{a}^\delta=(1+\delta)a,\, \overline{r}^\delta=(1+\delta)r$ and $ \underline{b}^\delta=\frac{b}{1+\delta}$.
Observe that $d\overline{r}^\delta>1$ and  $0<\overline{a}^\delta,\underline{b}^\delta<1$ by choosing $\delta$ small enough. By applying Theorem \ref{thm:1-1} to \eqref{eq:1-1'trs} and using \eqref{scaling1}, we deduce that
for each small $\eta>0$,
 \begin{equation}\label{spreadingdelta1}
\begin{cases}
\lim\limits_{t\rightarrow \infty} \sup\limits_{ x>(\overline{c}_{1}^\delta+\eta) t} (|\underline{u}^\delta(t,x)|+|\overline{v}^\delta(t,x)|)=0, \\
\lim\limits_{t\rightarrow \infty} \sup\limits_{(\underline{c}_2^\delta+\eta) t< x<(\overline{c}_{1}^\delta-\eta) t} (|\underline{u}^\delta(t,x)|+|\overline{v}^\delta(t,x)-(1+\delta)|)=0, \\
\lim\limits_{t\rightarrow \infty} \sup\limits_{(\underline{c}_{3}^\delta+\eta)t< x<(\underline{c}_2^\delta-\eta) t} (|\underline{u}^\delta(t,x)-\underline{k}_1^\delta|+|\overline{v}^\delta(t,x)-(1+\delta)\overline{k}_2^\delta|)=0 , \\
 \lim\limits_{t\rightarrow \infty} \sup\limits_{x<(\underline{c}_{3}^\delta-\eta)t} (|\underline{u}^\delta(t,x)-1|+|\overline{v}^\delta(t,x)|)=0, 
 \end{cases}
\end{equation}
where
$$\overline{c}_1^\delta = 2\sqrt{d\overline{r}^\delta}, \quad \underline{c}_2^\delta = \max\{\underline{c}_{\textup{LLW}}^\delta, \underline{c}_{\textup{nlp}}^\delta\},\quad \underline{c}_3^\delta = -\overline{\tilde{c}}_{\textup{LLW}}^\delta,
$$
$$(\underline{k}_1^\delta,\overline{k}_2^\delta)=\left(\frac{1-\overline{a}^\delta}{1-\overline{a}^\delta\underline{b}^\delta},\frac{1-\underline{b}^\delta}{1-\overline{a}^\delta \underline{b}^\delta}\right)\rightarrow(k_1,k_2)\,\,\text{ as } \delta\rightarrow0,$$
and $\underline{c}_{\textup{LLW}}^\delta$
$(\text{resp.}~\overline{\tilde{c}}_{\textup{LLW}}^\delta)$ is the spreading speed for \eqref{eq:1-1'trs}  as given in Theorem \ref{thm:LLW} $($resp. Remark \ref{rmk:LLW}$)$, and
\begin{equation*}
\begin{array}{ll}
\underline{c}_{\textup{nlp}}^\delta= \left\{
\begin{array}{ll}
\sqrt{d\overline{r}^\delta} - \sqrt{\overline{a}^\delta} + \frac{1-\overline{a}^\delta}{\sqrt{d\overline{r}^\delta} - \sqrt{\overline{a}^\delta}},  & \qquad  \qquad \text{ if }\,\,\sqrt{d\overline{r}^\delta} \leq \sqrt{\overline{a}^\delta} +\sqrt{1-\overline{a}^\delta},\\
2\sqrt{1-\overline{a}^\delta}, & \qquad \qquad \text{ if }\,\,\sqrt{d\overline{r}^\delta} > \sqrt{\overline{a}^\delta} +\sqrt{1-\overline{a}^\delta}.
\end{array}
\right.
\end{array}
\end{equation*}
Together with \eqref{eq:uvsupersub}, \eqref{spreadingdelta1} implies particularly that
$$
\overline{c}_1\leq \overline{c}_{1}^\delta,\quad \underline{c}_2\geq \underline{c}_2^\delta\quad \text{ and } \quad \underline{c}_3\geq \underline{c}_{3}^\delta.
$$
By the continuity of  $\underline{c}_{\textup{LLW}}^\delta$, $\overline{\tilde{c}}_{\textup{LLW}}^\delta$ in $\delta$ (see, e.g., \cite[Theorem 4.2 of Ch. 3]{Volpert_1994}), letting $\delta\to 0$ yields
\begin{equation}\label{eq:upperci}
\overline{c}_1\leq 2,\quad \underline{c}_2\geq 2\quad \text{ and }\quad \underline{c}_3\geq -\tilde{c}_{\textup{LLW}}.
\end{equation}

Similarly, by observing that
$(\overline{u}^\delta,\underline{v}^\delta)$ is a solution of \eqref{eq:1-1''} if and only if
 $$(\overline{U}^\delta,\underline V^\delta)=\left(\overline{u}^\delta,\frac{\underline{v}^\delta}{1-\delta}\right)$$
 is a solution of
 \begin{equation}\label{eq:1-1''trs}
\left\{
\begin{array}{ll}
\medskip
\partial_t U-\partial_{xx}U=U(1-U-\underline{a}^\delta V),& \text{ in }(0,\infty)\times \mathbb{R},\\
\partial _t V-d\partial_{xx}V=\underline{r}^\delta V(1-\overline{b}^\delta U-V),& \text{ in }(0,\infty)\times \mathbb{R},\\
\end{array}
\right .
\end{equation}
where $\underline{a}^\delta=(1-\delta)a$, $ \underline{r}^\delta=(1-\delta)r$ and $\overline{b}^\delta=\frac{b}{1-\delta}$.
This time, $d\underline{r}^\delta<1$ and  $0<\underline{a}^\delta,\overline{b}^\delta<1$ by choosing $\delta$ small enough. We apply Corollary \ref{rmk:drleq1} to \eqref{eq:1-1''trs}. In view of \eqref{eq:uvsupersub} and letting $\delta\to0$, we deduce
\begin{equation}\label{eq:lowerci}
\underline{c}_1\geq 2, \quad \overline{c}_2\leq 2 \quad\text{ and}  \quad \overline{c}_3\leq -\tilde{c}_{\textup{LLW}}.
\end{equation}
By definition of $\underline{c}_i$ and $\overline{c}_i$, we deduce $\underline{c}_i\leq \overline{c}_i$ for $i=1,\,2,\,3$. With \eqref{eq:upperci} and \eqref{eq:lowerci}, we obtain Theorem \ref{thm:1-1'}.
\end{proof}

\section*{Acknowledgement}
The authors wish to thank the two referees for his/her suggestions which have improved the paper and led to the addition of Theorem \ref{thm:1-1'}.
QL (201706360310) and SL (201806360223) would like to thank the China Scholarship Council for financial support during the period of their overseas study and  express their gratitude to the Department of Mathematics, The Ohio State University for the warm hospitality. SL is partially supported by the Outstanding Innovative Talents Cultivation Funded Programs 2018 of Renmin University of China. 

\appendixtitleon
\appendixtitletocon
\begin{appendices}

\section{Proof of Lemma \ref{lem:appen1}}\label{sec:B}
In this section, we prove Lemma \ref{lem:appen1}, which was used in proving Proposition \ref{prop:1} and Theorem \ref{thm:3-7}.

\begin{proof}[Proof of Lemma \ref{lem:appen1}]
We only prove {\rm(a)}, as  {\rm(b)} can be proved by similar arguments.

\noindent {\bf Step 1. } We first show
\begin{equation}\label{sup_uv}
 \limsup_{t \to \infty} \sup_{0 \leq x \leq \hat{c}t}\tilde u(t,x) \leq k_1, \quad   \liminf_{t \to \infty} \inf_{0 \leq x \leq \hat{c}t}\tilde v(t,x) \geq k_2.
\end{equation}
By Lemma \ref{lem:entire1}{\rm(a)}, it suffices to show $\displaystyle \liminf_{t \to \infty} \inf_{0 \leq x \leq \hat{c}t} \tilde v(t,x) >0$. Since
 $$\lim\limits_{t\to\infty}(\tilde u,\tilde v)(t,0)=(k_1,k_2) \text{ and } \lim\limits_{t\to\infty}(\tilde u,\tilde v)(t,\hat ct)=(0,1),$$
we can fix $T>0$ such that
$$
\inf_{t \geq T}\{\tilde v(t, 0), \tilde v(t, \hat ct)\} >0.
$$
Define
 $
\delta':= \min\left\{\frac{1-b}{2},\, \inf\limits_{0 < x < \hat cT}\tilde v(T,x),\,\inf\limits_{t \geq T}\{\tilde v(t, 0), \tilde v(t, \hat ct)\} \right\} >0.
$
Note that $\tilde v$ is a super-solution to the KPP-type equation
$\partial_t \tilde v = d\partial_{xx}\tilde v + r\tilde v(1-b -\tilde  v)$ in the domain $\Omega':=\{(t,x): t \geq T,\,  0\leq x \leq \hat c t\}$
such that $\tilde v(t,x) \geq \delta' >0$ on the parabolic boundary. Since $\tilde{v} - \delta'$ cannot attain negative interior minimum,
 we deduce that  $\tilde v(t,x) \geq \delta'$ in $\Omega'$, which completes Step 1.

For a small $\delta>0$ to be determined later, consider
\begin{align}\label{eq:4-1}
\left\{
\begin{array}{ll}
\medskip
\partial_t \tilde u=\partial_{xx}\tilde u+\tilde u(1+2\delta-\tilde u-a \tilde v) & \mathrm{in} ~(0,\infty)\times \mathbb{R},\\
\partial_t \tilde v=d \partial_{xx}\tilde v+r \tilde v(1-2\delta-b \tilde u-\tilde v) & \mathrm{in}~(0,\infty)\times \mathbb{R}.\\
\end{array}
\right.
\end{align}
Denote by ${c}_{\textup{LLW}}^\delta$  the spreading speed for the homogeneous coexistence equilibrium $(k_1^{\delta},k_2^{\delta})$ of \eqref{eq:4-1}  into the region where $(\tilde u, \tilde v) \approx (0,1-2\delta)$. By continuous dependence on parameters , 
 $c^\delta_{\textup{LLW}} \to c_{\textup{LLW}}$ as $\delta\to0$ \cite[Theorem 4.2 of Ch. 3]{Volpert_1994}, where $c_{\rm LLW}$ is given in Theorem \ref{thm:LLW}. We now define
\begin{equation}\label{eq:lambdahat}
\lambda_{\hat c,\hat\mu}=\frac{1}{2}\left(\hat c-\sqrt{\hat c^2-4(\hat\mu+1-a)}\right),
\end{equation}
which satisfies $-\lambda^2_{\hat c,\hat\mu}+\lambda_{\hat c,\hat\mu}\hat c-(1-a)=\hat\mu$. In view of definition of $c_{\hat c,\hat\mu}$ in the statement of Lemma \ref{lem:appen1}, $\hat\mu$ and $c_{\hat c,\hat\mu}$ can be rewritten as
\begin{equation}\label{eq:crewrite}
\hat\mu = \lambda_{\hat c,\hat\mu}(\hat c- {c}_{\hat{c},\hat\mu}),
\quad
{c}_{\hat{c},\hat\mu}=\left\{
\begin{array}{ll}
\medskip
{c}_{\textup{LLW}},& \text{ if } \hat\mu\geq \lambda_{\textup{LLW}}(\hat{c} - {c}_{\textup{LLW}}),\\
\lambda_{\hat c,\hat\mu}+\frac{1-a}{\lambda_{\hat c,\hat\mu}},&  \text{ if } \hat\mu< \lambda_{\textup{LLW}}(\hat{c} - {c}_{\textup{LLW}}).\\
\end{array}
\right. \end{equation}

\noindent{\bf Step 2.} Assume $ \hat\mu< \lambda_{\rm LLW}(\hat{c} - {c}_{\textup{LLW}})$, so that by \eqref{eq:crewrite} we have
\begin{equation}\label{eq:B4b}
{c}_{\hat{c},\hat\mu}= \lambda_{\hat c,\hat\mu}+\frac{1-a}{\lambda_{\hat c,\hat\mu}} \quad \Longleftrightarrow \quad \lambda_{\hat c,\hat\mu}^2 - \lambda_{\hat c,\hat\mu}c_{\hat c,\hat\mu} + (1-a) = 0. 
\end{equation}
We show that
\begin{equation}\label{sup_c}
\begin{array}{l}
\lim\limits_{t\rightarrow \infty}\sup\limits_{ x>ct}\tilde u(t,x)=0 \,\,\text{ for any }  c>{c}_{\hat{c},\hat\mu}.
\end{array}
\end{equation}

First, we claim ${c}_{\hat{c},\hat\mu}> {c}_{\textup{LLW}}$.  Considering the auxiliary function $$f(z)=\frac{z-\sqrt{z^2-4(1-a)}}{2}(\hat c-z).$$
By direct calculation, $f$ is decreasing in $[2\sqrt{1-a},\hat{c}]$. In view of \eqref{eq:LLL} and \eqref{eq:B4b}, we have $f({c}_{\hat{c},\hat\mu})=\hat\mu$ and $f( {c}_{\textup{LLW}})=\lambda_{\textup{LLW}}(\hat{c} - {c}_{\textup{LLW}})$. Since $ \hat\mu< \lambda_{\textup{LLW}}(\hat{c} - {c}_{\textup{LLW}})$ and $f$ is decreasing, we deduce ${c}_{\hat{c},\hat\mu}> {c}_{\textup{LLW}}$.

 Let $\lambda_{\hat c,\hat\mu}^\delta=\lambda_{\hat c,\hat\mu}-\delta$ and $c_{\hat c,\hat\mu}^\delta=\lambda_{\hat c,\hat\mu}^\delta+\frac{1-a+2\delta(1+a)}{\lambda_{\hat c,\hat\mu}^\delta}$. In view of
\begin{equation}\label{eq:B5b}
\begin{array}{ll}
(\lambda_{\hat c,\hat\mu}^\delta)^2-\lambda_{\hat c,\hat\mu}^\delta {c}_{\hat{c},\hat\mu}^\delta+(1-a)+2\delta(1+a)=0,
\end{array}
\end{equation}
and \eqref{eq:crewrite}, we obtain the following inequality which will be useful later.
\begin{equation}\label{eq:4.00}
    \begin{split}
    \hat\mu-\lambda_{\hat c,\hat\mu}^\delta (\hat c-{c}_{\hat{c},\hat\mu}^\delta)&=\lambda_{\hat c,\hat\mu}(\hat c- {c}_{\hat{c},\hat\mu})-\lambda_{\hat c,\hat\mu}^\delta (\hat c- {c}_{\hat{c},\hat\mu}^\delta)\\
   &=\hat c\delta  -\lambda_{\hat c,\hat\mu} {c}_{\hat{c},\hat\mu}+\lambda_{\hat c,\hat\mu}^\delta {c}_{\hat{c},\hat\mu}^\delta\\
   &>\hat c\delta-\lambda_{\hat c,\hat\mu}^2+ (\lambda_{\hat c,\hat\mu}^\delta)^2\\
   &=\delta(\sqrt{ \hat c^2-4(\hat\mu+1-a)}+\delta)>0,
\end{split}
\end{equation}
where we used \eqref{eq:B4b} and \eqref{eq:B5b} for the inequality, and used  $\lambda_{\hat c,\hat\mu}^\delta=\lambda_{\hat c,\hat\mu}-\delta$ and  \eqref{eq:lambdahat} for the last equality.

Since ${c}_{\hat{c},\hat\mu}>{c}_{\textup{LLW}}$, by the continuity of $ {c}_{\hat{c},\hat\mu}^\delta$ and ${c}_{\textup{LLW}}^\delta$ in $\delta$ (see, e.g., \cite[Theorem 4.2 of Ch. 3]{Volpert_1994}), we select $\delta$ so small that  ${c}_{\hat{c},\hat\mu}^\delta>{c}_{\textup{LLW}}^\delta$. Since $c^\delta_{\rm LLW}$ is the minimal traveling wave speed, this ensures the existence of the traveling wave solution  with speed ${c}_{\hat{c},\hat\mu}^\delta$ for \eqref{eq:4-1}. Let $(\varphi^\delta,\psi^\delta)(s)$ be such a traveling wave solution normalized by $\varphi^\delta(0)=k_1+\delta$ satisfying
\begin{equation}\label{eq:4-2}
\left\{
\begin{array}{l}
\smallskip
-{c}_{\hat{c},\hat\mu}^\delta\varphi'=\varphi''+\varphi(1+2\delta-\varphi-a\psi),~~~~~ s\in \mathbb{R},\\
\smallskip
-{c}_{\hat{c},\hat\mu}^\delta\psi'=d\psi''+r\psi(1-2\delta-b\varphi-\psi) ,~~ s\in \mathbb{R},\\
(\varphi, \psi)(-\infty)=(k_1^{\delta},k_2^{\delta}), ~(\varphi, \psi)(\infty)=(0,1-2\delta).
\end{array}
\right.
\end{equation}

To establish \eqref{sup_c}, we first prove that there exist $T_1$ and $x_1$ such that
 \begin{equation}\label{eq:4-3}
    ( \tilde u, \tilde v)(t,x) \preceq (\varphi^\delta, \psi^\delta)(x- c_{\hat c,\hat\mu}^\delta t-x_1) ~\mathrm{for}~t\geq T_1,~0\leq x\leq \hat ct.
\end{equation}
To apply the comparison principle, we need to verify the following conditions:
\begin{itemize}
\item[(i)]{ $(\tilde u,\tilde v)(T_1,x)\preceq (\varphi^\delta,\psi^\delta)(x-{c}_{\hat{c},\hat\mu}^\delta T_1-x_1)$     for $0\leq x\leq \hat cT_1$;}
\item[(ii)]{ $(\tilde u,\tilde v)(t,0)\preceq (\varphi^\delta,\psi^\delta) (-{c}_{\hat{c},\hat\mu}^\delta t-x_1)$     for $t\geq T_1$;}
\item[(iii)]{ $(\tilde u,\tilde v)(t,\hat ct)\preceq (\varphi^\delta,\psi^\delta) ((\hat c-{c}_{\hat{c},\hat\mu}^\delta)t-x_1)$     for $t\geq T_1$.}
\end{itemize}

First, we verify condition (iii). Since $\lim\limits_{t\to\infty}\tilde v(t,\hat ct)=1$, we choose $T_2$ such that
$$\tilde v(t,\hat ct)\geq 1-\delta\geq \psi^\delta(\infty)\geq \psi^\delta((\hat c-{c}_{\hat{c},\hat\mu}^\delta)t-x_1) \text{ for all } t\geq T_2 \, \text{ and }\, x_1 \geq 0.$$
Also, since $c_{\hat c,\hat\mu}^\delta>c^\delta_{\rm LLW}$, the expression of $\varphi^\delta$ at infinity {(see, e.g., \cite{Hou_2008})} can be described by
$$\varphi^\delta(s)=A\exp\left\{-(\lambda_{\hat c, \hat\mu}^\delta+o(1)) s\right\}~~~~as~~s \rightarrow \infty.$$
Recalling \eqref{eq:4.00}, we have
$ \hat\mu>\lambda_{\hat c,\hat\mu}^\delta (\hat c-{c}_{\hat{c},\hat\mu}^\delta)$.  Noting that, {by hypothesis of the lemma,} $\tilde u(t,\hat ct)\leq \exp\{-(\hat\mu+o(1)) t\}$ as  $t\to\infty$.
We can  choose $T_1>T_2$ such that $$\tilde u(t,\hat ct)<\varphi^\delta((\hat c-{c}_{\hat{c},\hat\mu}^\delta)t-x_1) \quad\text{ for } t\geq T_1 \, \text{ and }\, x_1 \geq 0,$$
which verifies (iii). Next, we choose (by Step 1)  $x_1 \gg 1$ so that {\rm(i)} and {\rm(ii)} hold. This allows the application of the comparison principle to establish  \eqref{eq:4-3}.

Therefore, for each $\delta>0$, we arrive at
\begin{equation*}
\begin{array}{l}
\limsup\limits_{t\rightarrow \infty}\sup\limits_{ x> ct}\tilde u(t,x)\leq \limsup\limits_{t\rightarrow \infty}\sup\limits_{ x> ct} \varphi^\delta(x- {c}_{\hat{c},\hat\mu}^\delta t-x_1)=0 \quad \text{ for }c>{c}_{\hat{c},\hat\mu}^\delta.
\end{array}
\end{equation*}
Since the above is true for all $\delta>0$, we deduce that
$$\limsup\limits_{t\rightarrow \infty}\sup\limits_{ x> ct}\tilde u(t,x)\leq 0, \quad  \text{ for each }\,\, c > \lim_{\delta\to 0}{c}_{\hat{c},\hat\mu}^\delta = {c}_{\hat{c},\hat\mu}.$$ Thus \eqref{sup_c} holds.

\noindent{\bf Step 3.} Assume $\hat\mu \geq \lambda_{\textup{LLW}}( \hat{c} - {c}_{\textup{LLW}})$. Then, for each $0<\hat\mu' < \lambda_{\textup{LLW}}( \hat{c} - {c}_{\textup{LLW}})$, we have
$$
\tilde u(t, \hat{c} t) \leq \exp(-\hat\mu' t)   \quad \text{ for all }t \gg 1.
$$
Hence, we may repeat Step 2 to deduce that 
\begin{equation}\label{eq:theabove}
\lim_{t\to\infty} \sup_{x > ct} \tilde u(t,x) = 0 \quad \text{ for each }c > {c}_{\hat{c},\hat\mu'}.
\end{equation}
  Letting $\hat\mu' \to \lambda_{\textup{LLW}}( \hat{c} - {c}_{\textup{LLW}})$, by direct calculation we have
  $$
   \lambda_{\hat c,\hat\mu'}=\frac{1}{2}\left(\hat c-\sqrt{\hat c^2-4(\hat\mu'+1-a)}\right)\to \lambda_{\textup{LLW}},
  $$
  so that
$${c}_{\hat{c},\hat\mu'}=  \lambda_{\hat c,\hat\mu'}+\frac{1-a}{ \lambda_{\hat c,\hat\mu'}}\to {c}_{\textup{LLW}}.$$
Hence, we deduce that \eqref{eq:theabove} holds for each $c > {c}_{\textup{LLW}}$.
The proof of Lemma \ref{lem:appen1} is complete.
\end{proof}

\section{Proof of Proposition \ref{prop:B2}} \label{sec:A}
This section is devoted to  the proof of Proposition \ref{prop:B2}.

Let $c_1 > \tilde{c}_1 \geq 2$ be given and let  $J_i(t,x)$ be given by \eqref{eq:B3-1'}, we may equivalently write
\begin{equation}\label{eq:jjj}
J_i(t,x) = \inf_{\gamma(\cdot)\in \mathbb{Y}^{t,x}}\left\{ \int_0^t L_i(s,\gamma(s),\dot\gamma(s))\,ds\right\},
\end{equation}
where $L_i$ is given in \eqref{eq:legendre}, and $\mathbb{Y}^{t,x}=\{\gamma \in H^1([0,t]): \, \gamma(0) \leq 0,~\gamma(t)=x\}.$

\begin {proof}[Proof of Proposition \ref{prop:B2}\rm{(a)}] We  divide the proof into several steps.

\noindent{\bf Step 1.}  We claim that for any $(t,x)\in(0,\infty)\times \mathbb{R}$, there exists some $\hat\gamma=\hat\gamma^{t,x}\in\mathbb{Y}^{t,x}$ such that
$$J_1(t,x) =  \int_0^t L_1(s,\hat\gamma(s),\dot{\hat\gamma}(s))\,ds.$$

Fix any $(t,x)\in(0,\infty)\times \mathbb{R}$. For each $k\geq 1$,  by \eqref{eq:jjj}, there is some $\gamma_k\in\mathbb{Y}^{t,x}$ such that
\begin{equation}\label{gamma_k}
  \int_0^t L_1(s,\gamma_k(s),\dot{\gamma}_k(s))\,ds\leq J_1(t,x)+1/k.
\end{equation}
 We claim that $\{\gamma_k\}_{k=1}^\infty$ is uniformly bounded in $H^1([0,t])$.  This is the case since (i) $\{\dot{\gamma}_k\}$ is uniformly bounded in $L^2$ by definition of $L^1$, and (ii) $\gamma_k(t) = x$. By passing to a subsequence, we may assume further that
 there is some $\hat\gamma\in\mathbb{Y}^{t,x}$ such that $\gamma_k\rightharpoonup\hat\gamma$ in $H^1([0,t]).$
Letting $k\to\infty$ in \eqref{gamma_k}, we  therefore arrive at
$$ J_1(t,x)\geq \liminf_{k\to\infty} \int_0^t L_1(s,\gamma_k(s),\dot{\gamma}_k(s))\,ds\geq  \int_0^t L_1(s,\hat\gamma(s),\dot{\hat\gamma}(s))\,ds\geq J_1(t,x),$$
where the last inequality follows from \eqref{eq:jjj}. Step 1 is thereby completed.

\noindent{\bf Step 2.} Let  $\hat\gamma\in \mathbb{Y}^{t,x}$ be given in Step 1. We show $\hat\gamma(0)=0$ if $x\geq0$.

Set $t_1=\inf\{s \in [0,t]: \hat\gamma(s)\geq 0\}$.
Define another path $\tilde{\gamma}\in \mathbb{X}$ by 
 $$
\tilde{\gamma}(s) = 0 ~\text{ for }s \in [0,t_1],\quad \tilde\gamma(s) = \hat\gamma(s) ~ \text{ for }s \in (t_1,t],
$$then
\begin{align*}
&\quad\int_0^t L_1(s,\hat\gamma(s),\dot{\hat\gamma}(s))\,ds\\
&=\int_0^{t_1} \left[\frac{|\dot {\hat{\gamma}}(s)|^2}{4} - 1+a \chi_{\{\hat{\gamma}(s) < c_1s\}}\right]ds + \int_{t_1}^t \left[\frac{|\dot {\hat{\gamma}}(s)|^2}{4}-1+a \chi_{\{\hat{\gamma}(s) < c_1s\}}\right]  ds\\
&\geq \int_0^{t_1} \left[ - 1+a\right]ds+\int_{t_1}^t \left[\frac{|\dot {\hat{\gamma}}(s)|^2}{4}-1 + a \chi_{\{\hat{\gamma}(s) < c_1s\}} \right]ds \\
&=\int_0^t L_1(s,\tilde\gamma(s),\dot{\tilde\gamma}(s))\,ds.
\end{align*}
Since $\hat\gamma$ is the minimizer, it follows that equality must hold, so that $\int_0^{t_1} |\dot{\hat\gamma}(s)|^2\,ds =0$, and thus $\hat\gamma(0) = \hat\gamma(t_1) =0$.

\noindent{\bf Step 3.} For $\frac{x}{t}\geq c_1$, we show
$
J_1(t,x)=J_2(t,x) =
\frac{t}{4}\left( \frac{x^2}{t^2} - 4\right).
$

We only show $J_1(t,x) =
\frac{t}{4}\left( \frac{x^2}{t^2} - 4\right)$, as the other one follows from the same arguments. By H\"{o}lder inequality, $J_1(t,x)\geq \frac{1}{4t} \left(\int_0^t |\dot\gamma(s)|\,ds\right)^2 - \int_0^t\,ds = \frac{x^2}{4t}-t$. 
Since the infimum can be attained by the path $\hat\gamma(s)=\frac{x}{t} \cdot s$ for $s\in[0,t]$, $J_1(t,x) =
\frac{t}{4}\left( \frac{x^2}{t^2} - 4\right)$ holds true.

\noindent{\bf Step 4.} For $0\leq \frac{x}{t}\leq c_1$, let $\hat{\gamma}$ be given in Step 1, and define
\begin{equation}\label{eq:enter}
\tau=\sup\left\{s \in [0,t]:\hat\gamma(s)\geq c_1s\right\}.
\end{equation} We show $\hat\gamma = \gamma_1$, where
$$
\gamma_1(s)=\left\{
\begin{array}{ll}
\medskip
c_1s,& 0\leq s\leq\tau,\\
c_1\tau+\frac{s-\tau}{t-\tau}(x-c_1\tau), &\tau<s\leq t.
\end{array}
\right.
$$

Since $\hat\gamma(\tau) = c_1\tau$, we have
\begin{equation}\label{eq:cs1}
\int_{0}^{\tau} \frac{|c_1|^2}{4}\,ds = \frac{1}{4\tau}\left[ \int_{0}^{\tau} \dot{\hat\gamma}(s)\,ds\right]^2 \leq \int_{0}^{\tau} \frac{|\dot{\hat\gamma}(s)|^2}{4}\,ds,
\end{equation}
and
\begin{equation}\label{eq:cs2}
\int_{\tau}^t \frac{1}{4}\left|\frac{x-c_1\tau}{t-\tau} \right|^2\,ds = \frac{1}{4(t-\tau)}\left[ \int_{\tau}^t \dot{\hat\gamma}(s)\,ds\right]^2 \leq \int_{\tau}^{t} \frac{|\dot{\hat\gamma}(s)|^2}{4}\,ds.
\end{equation}
Suppose $\hat\gamma \not\equiv \gamma_1$, then one of \eqref{eq:cs1} and \eqref{eq:cs2} is the strict inequality, so that
\begin{align*}
&\quad\int_0^t L_1(s, \gamma_1(s),\dot{\gamma}_1(s))\,ds\\
&=\int_0^\tau \left[\frac{|c_1|^2}{4} - 1\right]ds +\int_\tau^t \left[\frac{1}{4}\left| \frac{x-c_1\tau}{t-\tau}\right|^2 - 1 + a\right]\,ds \\
&< \int_0^\tau \left[\frac{|\dot{\hat\gamma}(s)|^2}{4} - 1\right]ds +\int_\tau^t \left[\frac{| \dot{\hat\gamma}(s)|^2}{4} - 1 + a\right]\,ds \\
&=\int_0^t L_1(s, \hat\gamma(s),\dot{\hat\gamma}(s))\,ds.
\end{align*}
This is a contradiction to definition of $\hat\gamma$, so that $\hat\gamma\equiv \gamma_1$.

\noindent{\bf Step 5.}  For $\frac{x}{t}\leq c_1$, let  $\hat\gamma$ be given in Step 1. 
We show $\hat\gamma(s)\leq c_1 s$ for $s \in [0,t]$.

We consider respectively two cases: \rm{(i)} $0\leq \frac{x}{t}\leq c_1$ and \rm{(ii)} $\frac{x}{t}<0$. For \rm{(i)}, by Step 4, we can directly get $\hat\gamma(s)\leq c_1 s$ for $s \in [0,t]$ by the explicit minimizing path determined there. For  \rm{(ii)}, if $\hat\gamma(s)\leq 0$  for $s \in [0,t]$, then there is nothing to prove; Otherwise,
there exists some $\tau'\in [0,t)$ such that $\hat\gamma(\tau')=0$ and $\hat\gamma(s)<0$ for $s\in (\tau',t]$. By the dynamic programming principle, we rewrite $J_1$ as
  \begin{align*}
J_1(t,x)&=\int_{0}^{\tau'}L_1(s, \hat\gamma(s),\dot{\hat\gamma}(s))\,ds+\int_{\tau'}^t L_1(s, \hat\gamma(s),\dot{\hat\gamma}(s))\,ds\\
&= \inf_{\gamma(\cdot)\in \mathbb{Y}^{\tau',0}}\left\{\int_{0}^{\tau'} L_1(s,\gamma(s),\dot\gamma(s))\,ds\right\}+\int_{\tau'}^t L_1(s, \hat\gamma(s),\dot{\hat\gamma}(s))\,ds.
\end{align*}
Then by Step 4, we deduce $\hat\gamma(s)\leq c_1 s$ for $s \in [0,\tau']$. This together with definition of $\tau'$, implies $\hat\gamma(s)\leq c_1s$ for $s\in [0,t]$, which completes Step 5.

\noindent{\bf Step 6.} For $x<0$, we show $J_1(t,x)=-t(1-a)$.

It follows from Step 5 that  the minimizing path $\hat{\gamma}$ stays in $\left\{(t,x):\, x\leq c_1t\right\}$. Hence $J_1(t,x) \geq \int_0^t(-1 + a)\,ds = -t(1-a)$. On the other hand, the infimum  is attained by the constant path $\hat \gamma(s)\equiv x$ for $s\in[0,t]$. Therefore,  $J_1(t,x)=-t(1-a)$.

\noindent{\bf Step 7.} We verify Proposition \ref{prop:B2}\rm{(a)}, i.e., \eqref{eq:propA1}.

 By Step 3 and Step 6, it remains to consider the case $0\leq\frac{x}{t}<c_1$. In this case,  if $c_1-2\sqrt{a}\leq \frac{x}{t}<c_1$, by Step 4, we have $\hat \gamma=\gamma_1$ and thus \begin{equation}\label{eq:gamma1}
\begin{split}
&\quad J_1(t,x) \\
&=  \inf\limits_{0 \leq \tau < t} \left\{\frac{(x-c_1\tau)^2}{4(t-\tau)} - (1-a)(t-\tau) + \tau\left( \frac{c_1^2}{4}  - 1\right)\right\}\\
&=\inf\limits_{0 < s \leq t} \left\{\frac{(x-c_1t)^2}{4s}+\frac{c_1(x-c_1t)}{2} - t+as +  \frac{c_1^2}{4}t\right\}\\
&=\left[\frac{c_1}{2}-\sqrt{a}\right][x-\bar{c}_{\textup{nlp}}t],
\end{split}
\end{equation}
where $\bar{c}_{\textup{nlp}}=\frac{c_1}{2}-\sqrt{a}+\frac{1-a}{\frac{c_1}{2}-\sqrt{a}}$.

On the other hand,  if $0\leq\frac{x}{t}<c_1-2\sqrt{a}$, then from the calculation above, $\frac{(x-c_1t)^2}{4(t-\tau)}+a(t-\tau)$ is an increasing function of $\tau$ when $0\leq\frac{x}{t}<c_1-2\sqrt{a}$.
So the infimum is attained at $\tau=0$, whence by the first equality of \eqref{eq:gamma1}, we directly obtain
\begin{equation*}
J_1(t,x) = \frac{t}{4}\left[\frac{x^2}{t^2}-4(1-a)\right]. 
\end{equation*}

The proof of Proposition \ref{prop:B2}{\rm{(a)}} is now complete.
\end{proof}

\begin {proof}[Proof of Proposition \ref{prop:B2}\rm{(b)}]
The Friedlin's condition \eqref{eq:freidlin} is a direct consequence of the following two observations:
\begin{itemize}

\item[{\rm (i)}] (by \eqref{eq:propA1}) There exists $c_0  \in (0, c_1)$ such that $$P=\{(t',x'): J_1(t',x') >0,\, t'>0\} = \{(t',x'): x' > c_0t',\, t'>0\}.$$
\item[{\rm (ii)}]  Since all possibilities are considered in the proof of Proposition \ref{prop:B2}\rm{(a)}, we can conclude that for each $(t,x) \in (0,\infty)\times (0,\infty)$ the optimal path $\hat\gamma=\hat\gamma^{t,x}$ of $J_1(t,x)=0$ is a piecewise line curve connecting $(0,0)$,   $(\tau,c_1\tau)$ and $(t,x)$ for some  $\tau \in [0,t)$. In particular the Freidlin condition \eqref{eq:freidlin} holds for $(t,x) \in \partial P= \{(t',x'):x'= c_0t' \}$.
\end{itemize}
The proof is now complete.
\end{proof}

\begin {proof}[Proof of Proposition \ref{prop:B2}\rm{(c)}] Let $c_1>\tilde{c}_1\geq 2$ be given.
First, observe from definition of $J_i$ in \eqref{eq:jjj} that
\begin{equation}\label{eq:jj}
J_i(kt,kx)=kJ_i(t,x) \quad \text{ for } k>0,\,t>0,\, x\in \mathbb{R},\,\text{ and }\, i=1,2.
\end{equation}

Next, we claim that there exists some  $\delta^*>0$ such that $J_1(t,x)=J_2(t,x)$ for all  $ (t,x)\in B_{\delta^*}(1,c_1)$, where $B_{\delta^*} (1,c_1)$ is a disk in $\mathbb{R}^2$ with center $(1,c_1)$ and radius $\delta^*$.

Fix $(t,x)\in (0,\infty)\times \mathbb{R}$ and let $\hat{\gamma}=\hat{\gamma}^{t,x}$ be the minimizing path of $J_1(t,x)$ for $ (t,x)\in B_{\delta^*}(1,c_1)$. We claim that  $\hat{\gamma}$ is also the minimizing path of $J_2(t,x)$.  To do so, define 
$$
\mathbb{Y}^{t,x}_1=\left\{\gamma\in H^1([0,t])\,\Big|\,\gamma(0)\leq 0, \gamma(t) = x, \text{ and }\gamma(s)\leq \tilde c_1s \text{ for some }s\right\}.
$$
Let $B_{\delta^*}^+(1,c_1):=\{(t,x)\in B_{\delta^*}(1,c_1)\,|\,\frac{x}{t}\geq c_1\}.$
By Step 2 in the proof of Proposition \ref{prop:B2}\rm{(a)}, 
we have, for all $\delta^*>0$, that
$$
J_1(t,x)=J_2(t,x) \quad \text{ in }B_{\delta^*}^+(1,c_1).
$$
Also notice from Step 4 in the proof of Proposition \ref{prop:B2}\rm{(a)}, 
that $\hat{\gamma}^{1,c_1}(s)=c_1s$, so that $\hat{\gamma}^{1,c_1}\notin\mathbb{Y}^{1,c_1}_1$ (since $c_1 > \tilde{c}_1$) and thus there exists some $\delta_0>0$ such that
\begin{equation*}
J_2(1,c_1)=\frac{1}{4}(c_1^2-4)< \inf\limits_{\gamma \in \mathbb{Y}^{1,c_1}_1}\left\{ \int_0^1 \left[\frac{|\dot \gamma(s)|^2}{4} - 1\right]ds \right\}-\delta_0.
\end{equation*}
By the continuity of $J_2$, we choose $\delta^*>0$ so that for $(t,x)\in B_{\delta^*}(1,c_1) \setminus B_{\delta^*} ^+(1,c_1)$,
\begin{align*}
J_2(t,x)&\leq J_2(1,c_1)+\frac{\delta_0}{2}\\
&\leq\inf\limits_{\gamma \in \mathbb{Y}^{1,c_1}_1} \int_0^t \left[\frac{|\dot \gamma(s)|^2}{4} - 1\right]ds-\frac{\delta_0}{2}\\
&<\inf\limits_{\gamma \in \mathbb{Y}^{t,x}_1} \int_0^t \left[\frac{|\dot \gamma(s)|^2}{4} - 1\right]ds\\
&\leq \inf\limits_{\gamma \in  \mathbb{Y}^{t,x}_1} \int_0^t L_2(s,\gamma(s),\dot\gamma(s))\,ds,
\end{align*}
which implies $\hat{\gamma}^{t,x}\in\mathbb{Y}^{t,x}\backslash \mathbb{Y}^{t,x}_1$, i.e,  the minimizing path stays in $\left\{x > \tilde c_1t\right\}$ and hence $J_1=J_2$ in $B_{\delta^*}(1,c_1)$.

Taking \eqref{eq:jj} into account, we conclude that for $(t,x)\in B_{\delta^*}(1,c_1)$ and $k>0$,
$$J_1(kt,kx)=kJ_1(t,x)=kJ_2(t,x)=J_2(kt,kx),$$
which implies immediately that $J_1(t,x)=J_2(t,x)$ in $\left\{x\geq (c_1-\delta^*)t\right\}$. 
\end{proof}

\end{appendices}



\medskip
Received xxxx 20xx; revised xxxx 20xx.
\medskip

\end{document}